\documentclass[11pt]{article}

\usepackage{version}         % include, exclude and comment
%\includeversion{version:...}
%\excludeversion{version:...}

% \usepackage[right]{showlabels}
 \usepackage{color}

\usepackage{tabulary}
\usepackage{cite}

\usepackage{multirow}
\usepackage[OT1]{fontenc}
\usepackage{type1cm}
\usepackage{amssymb}
\usepackage{mathrsfs}
\usepackage[english]{babel}
\usepackage[latin1]{inputenc}
\usepackage[T1]{fontenc}
\usepackage{palatino}
\usepackage{amsfonts}
\usepackage{amsmath}
\usepackage{amssymb}
\usepackage{amsthm}
\usepackage{bbm}
\usepackage{extarrows}
\usepackage{graphicx}
\usepackage[usenames,dvipsnames]{xcolor}
\usepackage{wrapfig}
\usepackage{type1cm}
\usepackage[bf]{caption}
\usepackage{esint}
\usepackage{version}
\usepackage[colorlinks = true, citecolor = black, linkcolor = black, urlcolor = black, pdfstartview=FitH]{hyperref}
%\usepackage[usenames]{color}
%\definecolor{darkblue}{rgb}{0,0.08,0.45}
\usepackage{caption}
\usepackage{tikz}
\usepackage{bbding, wasysym}
\usepackage{enumitem}
\usepackage{float}

\usepackage[left=2.3cm,top=3cm,right=2.3cm]{geometry}
\geometry{a4paper,centering}
%\geometry{letter,centering}

%\renewcommand{\captionlabeldelim}{.}
%\renewcommand{\thefigure}{\Alph{figure}}
%\setcaptionwidth{.9\textwidth}

\allowdisplaybreaks[4]

\numberwithin{equation}{section}

\theoremstyle{plain}
\newtheorem{theorem}{Theorem}[section]

\newtheorem{corollary}[theorem]{Corollary}
\newtheorem{proposition}[theorem]{Proposition}
\newtheorem{lemma}[theorem]{Lemma}

\theoremstyle{remark}
\newtheorem{remark}[theorem]{Remark}

\newtheorem*{ack}{Acknowledgement}

\theoremstyle{definition}
\newtheorem{definition}[theorem]{Definition}

% [[commands]]
\newcommand{\R}{\mathbb{R}}
\newcommand{\C}{\mathbb{C}}

\newcommand{\N}{\mathbb{N}}
\renewcommand{\le}{\leqslant}
\renewcommand{\ge}{\geqslant}

\renewcommand{\P}{\mathbb{P}}

\newcommand{\cH}{\mathcal{H}}

\newcommand{\cL}{\mathcal{L}}

\newcommand{\cF}{\mathcal{F}}

\newcommand{\cP}{\mathcal{P}}

\newcommand{\Z}{\mathbb{Z}}

\newcommand{\spt}{\operatorname{spt}}

\renewcommand{\emptyset}{\varnothing}
\renewcommand{\epsilon}{\varepsilon}
\renewcommand{\phi}{\varphi}

 % Contractive matrices of GL_d(\R). How to denote?

%%%%%%%%%%%%%%%%%%%%%%%%%%%%%%%%%%%%%%%%%%%%%%%%%%%%%%%%%%%%%%%%%%%%%
\begin{document}

\title{Spectrality and supports of infinite convolutions in $\mathbb{R}^d$}

\author{Yao-Qiang Li}

\date{}

\maketitle

\renewcommand{\thefootnote}{}
\footnote{\emph{MSC}: 28A80, 42C30}
\footnote{\emph{Keywords}: infinite convolution, spectral measure, compact support, non-compact support, Hausdorff dimension, packing dimension}

\begin{abstract}
\noindent We study the spectrality of a class of infinite convolutions in $\R^d$, generalizing a result given by Li, Miao and Wang in 2022 from $\R$ to $\R^d$. This allows us to easily construct spectral measures with and without compact supports in $\R^d$, and motivates us to systematically study the supports of infinite convolutions. In particular, we give a sufficient and necessary condition for infinite convolutions to exist with compact supports, generalizing a related well-known result which is widely used. After giving strong relations between supports of infinite convolutions and sets of infinite sums, we study the closedness and fractal dimensions of infinite sums of union sets in order to deal with non-compact supports of infinite convolutions. As an application of these new tools, we deduce that there are spectral measures with and without compact supports of arbitrary Hausdorff and packing dimensions in $\R^d$, generalizing another result given by Li, Miao and Wang in 2022 from $\R$ to $\R^d$.
\end{abstract}

\section{Introduction}

\subsection{Spectrality of infinite convolutions}
\indent

Let $d\in\N$. A Borel probability measure $\mu$ on $\R^d$ is called a \textit{spectral measure} if there exists a countable set $\Lambda\subseteq\R^d$ such that the family of exponential functions
$$\big\{e^{-2\pi i<\lambda,\text{ }\cdot\text{ }>}:\lambda\in\Lambda\big\}$$
forms an orthonormal basis in $L^2(\mu)$. We call $\Lambda$ a \textit{spectrum} of $\mu$.

The existence of spectra of measures was initiated by Fuglede \cite{F74} in 1974. It is a basic question in harmonic analysis since the orthonormal basis consisting of exponential functions is used for Fourier series expansions
of functions \cite{S06}. Note that any compactly supported spectral measure must be of pure type: either discrete with finite support, singularly continuous, or absolutely continuous \cite{HLL13,LW06}. Since Jorgensen and Pedersen \cite{JP98} found the first singularly continuous spectral measure supported on a Cantor set in 1998, the spectrality of fractal measures has been widely studied until now (see \cite{AFL19,AHH19,AH14,AHL15,AHLi15,CLW21,D12,DFY21,DHL13,DHL14,DL17,DS15,DC21,DHL19,DL15,FHW18,FT24,LW22,LW23,L11,LMW22,LMW24,LMW24jfa,LW24,LL17,LZWC21,LDZ22,S19,Y22} and the references therein).

Use $\cP(\R^d)$ to denote the set of all Borel Probability measures on $\R^d$. Given $\mu_1,\mu_2,\cdots\in\cP(\R^d)$, if the finite convolution
$$\mu_1*\mu_2*\cdots*\mu_n$$
converges weakly to a Borel probability measure, we denote the weak limit measure by the infinite convolution
$$\mu_1*\mu_2*\mu_3*\cdots$$
and say that the infinite convolution exists.

Let $\delta_a$ denote the \textit{Dirac measure} concentrated at the point $a$, and for any non-empty finite set $A\subseteq\R^d$, define the \textit{uniform discrete measure} supported on $A$ by
$$\delta_A:=\frac{1}{\#A}\sum_{a\in A}\delta_a$$
where $\#$ denotes the cardinality of a set.

A square matrix is called \textit{expanding} if all eigenvalues have modulus strictly greater than $1$. Given a $d\times d$ expanding integer matrix $R$ and a non-empty finite set $B\subseteq\Z^d$ of integer vectors, we call $(R,B)$ an \textit{admissible pair} in $\R^d$ if there exists a finite set $L\subseteq\Z^d$ with $\#L=\#B$ such that the matrix
$$\Big[\frac{1}{\sqrt{\#B}}e^{-2\pi i<R^{-1}b,l>}\Big]_{b\in B,l\in L}$$
is unitary. To emphasize $L$, we also call $(R,B,L)$ a \textit{Hadamard triple} in $\R^d$.

It is known that if $\big\{(R_k,B_k)\big\}_{1\le k\le n}$ are admissible pairs, then the finite convolution
$$\delta_{R_1^{-1}B_1}*\delta_{R_1^{-1}R_2^{-1}B_2}*\cdots*\delta_{R_1^{-1}R_2^{-1}\cdots R_n^{-1}B_n}$$
is a spectral measure for each $n\in\N$. Since infinite convolutions generated by admissible pairs were raised by Strichartz \cite{S00} in 2000, the following question has received a lot of attention: \textit{Given a sequence of admissible pairs $\{(R_k,B_k)\}_{k\ge1}$, under what conditions is the infinite convolution
$$\delta_{R_1^{-1}B_1}*\delta_{R_1^{-1}R_2^{-1}B_2}*\delta_{R_1^{-1}R_2^{-1}R_3^{-1}B_3}*\cdots$$
a spectral measure?} See for examples \cite{AFL19,AH14,AHL15,DHL19,DL17,FHW18,FT24,LMW24,LMW24jfa,LW24,LDZ22} for affirmative results for this question.

As mentioned in \cite{LMW22}, the spectrality of infinite convolutions in $\R^d$ is very complicated. Most of the existing research on the spectrality of infinite convolutions has focused on $\R$ with compact supports. In \cite{LMW22}, after giving criteria for the weak convergence of infinite convolutions in $\R^d$, Li, Miao and Wang \cite[Theorem 1.4]{LMW22} focused on the spectrality of a class of infinite convolutions in $\R$, which may not be compactly supported. As the first main result in this paper, we generalize \cite[Theorem 1.4]{LMW22} from $\R$ to $\R^d$ in the following Theorem \ref{spectrality} by studying the spectrality of a class of infinite convolutions in $\R^d$, which may also not be compactly supported.

First we generalize the concept of a sequence of nearly consecutive digit sets \cite{LMW22} in $\R$ to the concept of a sequence of nearly $d$-th power lattices in $\R^d$. Given a sequence of positive integers $\{m_k\}_{k\ge1}$ and a sequence of $d\times d$ real matrices $\{R_k\}_{k\ge1}$, we call a sequence $\{B_k\}_{k\ge1}$ of subsets of $\R^d$ \textit{a sequence of nearly $d$-th power lattices} with respect to $\{m_k\}_{k\ge1}$ and $\{R_k\}_{k\ge1}$ if
$$B_k\equiv\{0,1,\cdots,m_k-1\}^d\quad(\text{mod }R_k\Z^d)\quad\text{for all }k\in\N$$
and
\begin{equation}\label{nearly}
\sum_{k=1}^\infty\frac{1}{m_k^d}\#(B_k\setminus\{0,1,\cdots,m_k-1\}^d)<\infty.
\end{equation}
Note that $\{B_k\}_{k\ge1}$ are subsets of $\R^d$ and may not be subsets of $[0,\infty)^d$.

To generalize \cite[Theorem 1.4]{LMW22} from $\R$ to $\R^d$, the first difficulty is to find suitable high-dimensional generalizations for the one-dimensional conditions $b_k\le N_k$ and $b_k\mid N_k$ for each $k\in\N$ in \cite[Theorem 1.4]{LMW22}. In Theorem \ref{spectrality}, we find that the conditions, $[-m_k,m_k]^d\subseteq R_k^T[-1,1]^d$ and all entries of the $d\times d$ matrix $R_k$ are multiples of $m_k$ for each $k\in\N$, are suitable, where $R_k^T$ denotes the transpose of $R_k$.

\begin{theorem}\label{spectrality}
Let $d\in\N$, $\{m_k\}_{k\ge1}$ be a sequence of integers no less than $2$, $\{R_k\}_{k\ge1}$ be a sequence of $d\times d$ invertible integer matrices, and $\{B_k\}_{k\ge1}$ be a sequence of nearly $d$-th power lattices with respect to $\{m_k\}_{k\ge1}$ and $\{R_k\}_{k\ge1}$. If for every $k\in\N$, all entries of $R_k$ are multiples of $m_k$ and $[-m_k,m_k]^d\subseteq R_k^T[-1,1]^d$, then the infinite convolution
$$\mu=\delta_{R_1^{-1}B_1}*\delta_{R_1^{-1}R_2^{-1}B_2}*\delta_{R_1^{-1}R_2^{-1}R_3^{-1}B_3}*\cdots$$
exists and is a spectral measure with a spectrum in $\Z^d$.
\end{theorem}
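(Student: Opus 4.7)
My plan is to adapt the strategy of \cite[Theorem 1.4]{LMW22} from $\R$ to $\R^d$, organising the argument into three stages: (i) exhibit an integer spectrum $L_k\subseteq\Z^d$ making $(R_k,B_k,L_k)$ a Hadamard triple for every $k$; (ii) prove weak convergence of $\mu_n$; and (iii) show that the natural candidate $\Lambda:=\bigcup_{n\geqslant1}\Lambda_n\subseteq\Z^d$ is a spectrum of $\mu$, where $\Lambda_n:=L_1+R_1^TL_2+\cdots+R_1^TR_2^T\cdots R_{n-1}^TL_n$.

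In stage (i), the hypothesis that every entry of $R_k$ is a multiple of $m_k$ lets me factor $R_k=m_kN_k$ with $N_k\in M_d(\Z)$, and I take $L_k:=N_k^T\{0,1,\ldots,m_k-1\}^d\subseteq\Z^d$. A direct calculation gives $\langle R_k^{-1}b,N_k^Tl'\rangle=\langle b,l'\rangle/m_k$ for $b,l'\in\{0,\ldots,m_k-1\}^d$, so $\bigl[\tfrac{1}{\sqrt{m_k^d}}e^{-2\pi i\langle b,l'\rangle/m_k}\bigr]_{b,l'}$ is the $d$-fold tensor product of the standard $m_k\times m_k$ discrete Fourier matrix, hence unitary, proving $(R_k,\{0,\ldots,m_k-1\}^d,L_k)$ is a Hadamard triple. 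Since $B_k\equiv\{0,\ldots,m_k-1\}^d\pmod{R_k\Z^d}$ and $L_k\subseteq\Z^d$, the exponentials $e^{-2\pi i\langle R_k^{-1}b,l\rangle}$ are unchanged when the residue system is replaced by $B_k$, so $(R_k,B_k,L_k)$ is also a Hadamard triple, and standard convolution identities then give that $\Lambda_n$ is a spectrum of the finite spectral measure $\mu_n$.

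For stage (ii), I would work on the Fourier side with $\widehat{\mu}_n(\xi)=\prod_{k=1}^n\widehat{\delta}_{R_1^{-1}\cdots R_k^{-1}B_k}(\xi)$. Splitting each factor according to $B_k=(B_k\cap\{0,\ldots,m_k-1\}^d)\cup(B_k\setminus\{0,\ldots,m_k-1\}^d)$ and using the contraction of $R_1^{-1}\cdots R_k^{-1}$ guaranteed by the box inclusion $[-m_k,m_k]^d\subseteq R_k^T[-1,1]^d$, one estimates the discrepancy between the $k$-th factor and its ``clean'' counterpart (with $B_k$ replaced by $\{0,\ldots,m_k-1\}^d$) by a constant multiple of $m_k^{-d}\#(B_k\setminus\{0,\ldots,m_k-1\}^d)$, uniformly on compact sets. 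Combined with hypothesis (\ref{nearly}) this delivers pointwise convergence of $\widehat{\mu}_n$ to a function continuous at the origin, so L\'evy's theorem gives $\mu_n\Rightarrow\mu$.

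Stage (iii) is the main obstacle. The finite Parseval identities $\sum_{\lambda\in\Lambda_n}|\widehat{\mu}_n(\xi+\lambda)|^2\equiv1$ are inherited from stage (i), but because $\mu$ need not be compactly supported, Strichartz's equi-positivity machinery does not apply. Following the one-dimensional argument of \cite{LMW22}, for each $\lambda\in\Lambda_n$ I would factor $\widehat{\mu}(\xi+\lambda)=\widehat{\mu}_n(\xi+\lambda)\prod_{k>n}\widehat{\delta}_{R_1^{-1}\cdots R_k^{-1}B_k}(\xi+\lambda)$, then use the box inclusion $[-m_k,m_k]^d\subseteq R_k^T[-1,1]^d$ to keep the tail product uniformly close to $1$ on a prescribed compact set, and apply a Borel--Cantelli-type estimate fed by (\ref{nearly}) to transfer the finite Parseval identities into the desired infinite one $\sum_{\lambda\in\Lambda}|\widehat{\mu}(\xi+\lambda)|^2\equiv1$. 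By the Jorgensen--Pedersen criterion, this shows $\Lambda\subseteq\Z^d$ is a spectrum of $\mu$.
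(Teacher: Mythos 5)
Your stages (i) and (ii) are sound and essentially match the paper: your $L_k=N_k^T\{0,\ldots,m_k-1\}^d$ is exactly the paper's $\frac{1}{m_k}R_k^T\{0,\ldots,m_k-1\}^d$ in Proposition \ref{mod HT}, and your Fourier-side convergence argument is a legitimate variant of the paper's route through Theorem \ref{convergence} (indeed, working with $(R_k^T)^{-1}\cdots(R_1^T)^{-1}\xi$ on the Fourier side lets you use the box inclusion directly, whereas the paper, working on the space side, needs the extra Proposition \ref{R to RT} to pass from $R_k^T$ to $R_k$). You should still record that $[-m_k,m_k]^d\subseteq R_k^T[-1,1]^d$ forces $R_k$ to be expanding (Proposition \ref{implies expanding}), which is part of the definition of an admissible pair.

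Stage (iii) has a genuine gap. For $\lambda=l_1+R_1^Tl_2+\cdots+R_1^T\cdots R_{n-1}^Tl_n\in\Lambda_n$ the tail factors as $\widehat{\mu}_{>n}(\xi+\lambda)=\widehat{\nu}_{>n}\big((R_n^T)^{-1}\cdots(R_1^T)^{-1}(\xi+\lambda)\big)$, and since $(R_j^T)^{-1}l_j\in\frac{1}{m_j}\{0,\ldots,m_j-1\}^d$, the rescaled frequencies $(R_n^T)^{-1}\cdots(R_1^T)^{-1}\lambda$ do \emph{not} shrink to $\mathbf{0}$: as $\lambda$ ranges over $\Lambda_n$ they spread over a unit-scale region (essentially $[0,1)^d$ up to geometrically small errors), where $\widehat{\nu}_{>n}$ can be small or even vanish. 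So the tail product is not ``uniformly close to $1$'' at the points $\xi+\lambda$ that matter, and no Borel--Cantelli estimate fed by (\ref{nearly}) will rescue this; moreover, the naive candidate $\Lambda=\bigcup_n\Lambda_n$ need not be a spectrum at all without further input. What is actually needed --- and what constitutes the technical heart of the paper's proof (Lemma \ref{equi-lemma}) --- is the \emph{equi-positivity} of $\{\nu_{>n}\}$: a uniform lower bound $|\widehat{\nu}_{>n}(\xi)|\geqslant\epsilon>0$ on $[-\frac{2}{3},\frac{2}{3}]^d$, obtained by using $[-m_k,m_k]^d\subseteq R_k^T[-1,1]^d$ to contract the arguments into $[-\frac{\sqrt6}{m_{n+k}\pi},\frac{\sqrt6}{m_{n+k}\pi}]^d$, bounding $|\widehat{\delta}_{B_{n+k}}|$ below by $\prod_j(1-\frac{m^2\pi^2\xi_j^2}{6})-\frac{2c_{n+k}}{m_{n+k}^d}$ via Lagrange's identity, and summing using (\ref{nearly}); crucially, each $x\in[0,1)^d$ must first be translated by an integer vector $k(x)$ into $[-\frac{2}{3},\frac{2}{3}]^d$. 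This equi-positivity is then fed into the Li--Wang theorem (Theorem \ref{cite}), whose proof builds the spectrum in $\Z^d$ with exactly these integer corrections. Your premise that ``equi-positivity machinery does not apply'' because $\mu$ is not compactly supported is mistaken --- the versions in \cite{LW24,LMW24} were developed precisely to remove the compact-support hypothesis, and they are the tool your sketch is missing.
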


Using this result, one can easily construct spectral measures with and without compact supports in $\R^d$.

To show the spectrality of $\mu$ in Theorem \ref{spectrality}, using Theorem \ref{cite}, a tool developed by Li and Wang \cite{LW24} recently, the main we need to prove is Lemma \ref{equi-lemma}. The key in the proof of Lemma \ref{equi-lemma} is to estimate the lower bound of the modulus of the Fourier transform of the push-forward measure of the tail of the infinite convolution $\mu$ on $[-2/3,2/3]^d$. One will see that the estimation for our high-dimensional case is much more intricate than the one-dimensional case given in the proof of \cite[Theorem 1.4]{LMW22}.

In the proof of the spectrality of $\mu$ in Theorem \ref{spectrality}, except for Lemma \ref{equi-lemma}, we establish Proposition \ref{implies expanding} to guarantee that $R_k$ is expanding and then Proposition \ref{mod HT} to guarantee that $(R_k,B_k)$ is an admissible pair for each $k\in\N$. In the proof of the existence of the infinite convolution $\mu$ in Theorem \ref{spectrality}, with the conditions (\ref{nearly}) and $[-m_k,m_k]^d\subseteq R_k^T[-1,1]^d$ for all $k\in\N$, except for using Corollary \ref{exist-cor}, we also need Proposition \ref{R to RT}. These disclose the complicacy of the high-dimensional case.

By Theorem \ref{spectrality} we immediately get the following, which generalizes \cite[Theorem 1.4]{AH14} from $\R$ to $\R^d$.

\begin{corollary}\label{spectrality-cor}
Let $d\in\N$. For every $k\in\N$, let $m_k\ge2$ be an integer, $B_k=\{0,1,\cdots,m_k-1\}^d$ and $R_k$ be a $d\times d$ invertible integer matrix such that all entries are multiples of $m_k$ and $[-m_k,m_k]^d\subseteq R_k^T[-1,1]^d$. Then the infinite convolution
$$\mu=\delta_{R_1^{-1}B_1}*\delta_{R_1^{-1}R_2^{-1}B_2}*\delta_{R_1^{-1}R_2^{-1}R_3^{-1}B_3}*\cdots$$
exists and is a spectral measure with a spectrum in $\Z^d$.
\end{corollary}

Note that the condition $[-m_k,m_k]^d\subseteq R_k^T[-1,1]^d$ in Theorem \ref{spectrality} and Corollary \ref{spectrality-cor} is not equivalent to $[-m_k,m_k]^d$ $\subseteq$ $R_k[-1,1]^d$. For example, take $d=2$, $m_k=m=2$ and
$$R_k=R=\left(\begin{matrix} 4 & -2 \\ 0 & 2 \end{matrix}\right)$$
for all $k\in\N$. Then
$$R^{-1}=\left(\begin{matrix}
1/4 & 1/4 \\
0 & 1/2
\end{matrix}\right)\quad\text{and}\quad(R^T)^{-1}=\left(\begin{matrix}
1/4 & 0 \\
1/4 & 1/2
\end{matrix}\right).$$
Since
$$R^{-1}\Big\{\left(\begin{matrix}2\\2\end{matrix}\right),\left(\begin{matrix}2\\-2\end{matrix}\right),\left(\begin{matrix}-2\\2\end{matrix}\right),\left(\begin{matrix}-2\\-2\end{matrix}\right)\Big\}\subseteq[-1,1]^2,$$
by the linearity of $R^{-1}$, we get $R^{-1}[-2,2]^2\subseteq[-1,1]^2$ and then $[-2,2]^2\subseteq R[-1,1]^2$. But
$$(R^T)^{-1}\left(\begin{matrix}2\\2\end{matrix}\right)=\left(\begin{matrix}1/2\\3/2\end{matrix}\right)\notin[-1,1]^2$$
implies $(R^T)^{-1}[-2,2]^2$ $\nsubseteq$ $[-1,1]^2$ and then $[-2,2]^2$ $\nsubseteq$ $R^T[-1,1]^2$.

\subsection{Supports of infinite convolutions}
\indent

Spectrality of measures is very important in harmonic analysis since the orthonormal basis consisting of exponential functions is used for Fourier series expansions of functions. The supports of spectral measures are also very important since they are actually related to the domains of the expanded functions.

In the last subsection, we confirm the spectrality of a class of infinite convolutions in $\R^d$, which may not be compactly supported. This motivates us to systematically study the supports of infinite convolutions in $\R^d$ in this subsection.

For any sets $A_1,A_2,\cdots\subseteq\R^d$, define the set of infinite sum
$$\sum_{k=1}^\infty A_k:=\Big\{x\in\R^d:\exists a_k\in A_k\text{ for each }k\in\N\text{ s.t. }x=\sum_{k=1}^\infty a_k\Big\}.$$
Use $\spt\mu$ to denote the support of the measure $\mu$, i.e., the smallest closed set with full measure, and use $\overline{A}$ to denote the closure of the set $A$, i.e., the smallest closed set containing $A$.

It is well-known and widely used \cite{AFL19,AHH19,AH14,AHL15,LMW22,LMW24,LDZ22,S06,WLD20,WX24,Y22} that: for any non-empty finite sets $A_1,A_2,\cdots\subseteq\R^d$, if
\begin{equation}\label{<infty}
\sum_{k=1}^\infty\max_{a\in A_k}|a|<\infty,
\end{equation}
then
\begin{equation}\label{exist-compact}
\delta_{A_1}*\delta_{A_2}*\cdots\text{ exists with compact support},
\end{equation}
and
\begin{equation}\label{spt=sum}
\spt(\delta_{A_1}*\delta_{A_2}*\cdots)=\sum_{k=1}^\infty A_k,
\end{equation}
noting that $\sum_{k=1}^\infty a_k$ converges for any $a_k\in A_k$ with $k\in\N$ by (\ref{<infty}). Thus a lot of existing research on the spectrality of infinite convolutions with compact supports has assumed (\ref{<infty}), which is a sufficient but not necessary condition for (\ref{exist-compact}). We generalize this well-known result by showing in Corollary \ref{spt-cor} (2) that a sufficient and necessary condition for (\ref{exist-compact}) is (\ref{min-max con}), which still implies (\ref{spt=sum}). In general when $\delta_{A_1}*\delta_{A_2}*\cdots$ may not be compactly supported, Remark \ref{spt-re} says that $\sum_{k=1}^\infty A_k$ may not be closed and Corollary \ref{spt-cor} (1) gives
$$\spt(\delta_{A_1}*\delta_{A_2}*\cdots)=\overline{\sum_{k=1}^\infty A_k}.$$
We will see that this is a powerful tool to study $\spt(\delta_{A_1}*\delta_{A_2}*\cdots)$ even if it is not compact.

First we fix some notations and terminologies. For all $A\subseteq\R^d$ and $j\in\{1,\cdots,d\}$, denote the $j$-th coordinate projection of $A$ from $\R^d$ to $\R$ by
$$(A)_j:=\big\{a_j\in\R:(a_1,\cdots,a_j,\cdots,a_d)\in A\big\}.$$
For a non-empty set $A\subseteq\R$, we use $\inf A\in\R\cup\{-\infty\}$ and $\sup A\in\R\cup\{+\infty\}$ respectively to denote the infimum and supremum of $A$, and use $\min A\in\R$ and $\max A\in\R$ respectively to denote the minimal and maximal of $A$ if they exist. Given $x_1,x_2,\cdots\in\R\cup\{\pm\infty\}$, we say that $\sum_{k=1}^\infty x_k$ converges if $x_k\in\R$ (not $\pm\infty$) for all $k\in\N$ and the limit $\lim_{n\to\infty}\sum_{k=1}^nx_k$ exists (not $\pm\infty$). Otherwise we say that $\sum_{k=1}^\infty x_k$ diverges.

Now we give the following theorem as the second main result in this paper, from which Corollary \ref{spt-cor} can be deduced.

\begin{theorem}\label{spt}
Let $d\in\N$ and $\mu_1,\mu_2,\cdots\in\cP(\R^d)$.
\vspace{5pt}
\newline\emph{(1)} If the infinite convolution $\mu_1*\mu_2*\cdots$ exists, then
$$\spt(\mu_1*\mu_2*\cdots)=\overline{\sum_{k=1}^\infty\spt\mu_k}.$$
\emph{(2)} The infinite convolution $\mu_1*\mu_2*\cdots$ exists with compact support if and only if
\begin{equation}\label{spt inf-sup con}
\sum_{k=1}^\infty\inf(\spt\mu_k)_j\text{ and }\sum_{k=1}^\infty\sup(\spt\mu_k)_j\text{ converge}\quad\text{for all }j\in\{1,\cdots,d\},
\end{equation}
which implies
$$\spt(\mu_1*\mu_2*\cdots)=\sum_{k=1}^\infty\spt\mu_k.$$
\end{theorem}

To prove Theorem \ref{spt}, we need to use Theorem \ref{old}, probability theory, Propositions \ref{+=} and \ref{bounded-closed}, where the proof of Proposition \ref{bounded-closed} is based on some detailed convergence analysis and a translation technique for sets of infinite sums.

By Theorem \ref{spt} we immediately get the following.

\begin{corollary}\label{spt-cor}
Let $d\in\N$ and $A_1,A_2,\cdots\subseteq\R^d$ be non-empty finite sets.
\vspace{5pt}
\newline\emph{(1)} If the infinite convolution $\delta_{A_1}*\delta_{A_2}*\cdots$ exists, then
$$\spt(\delta_{A_1}*\delta_{A_2}*\cdots)=\overline{\sum_{k=1}^\infty A_k}.$$
\emph{(2)} The infinite convolution $\delta_{A_1}*\delta_{A_2}*\cdots$ exists with compact support if and only if
\begin{equation}\label{min-max con}
\sum_{k=1}^\infty\min(A_k)_j\text{ and }\sum_{k=1}^\infty\max(A_k)_j\text{ converge}\quad\text{for all }j\in\{1,\cdots,d\},
\end{equation}
which implies
$$\spt(\delta_{A_1}*\delta_{A_2}*\cdots)=\sum_{k=1}^\infty A_k.$$
\end{corollary}

Note that (\ref{<infty}) implies (\ref{min-max con}) since
$$\max\Big\{\sum_{k=1}^\infty\big|\min(A_k)_j\big|,\sum_{k=1}^\infty\big|\max(A_k)_j\big|\Big\}\le\sum_{k=1}^\infty\max_{(a_1,\cdots,a_d)\in A_k}|a_j|\le\sum_{k=1}^\infty\max_{a\in A_k}|a|.$$

\begin{remark}\label{spt-re}
In Corollary \ref{spt-cor} (1), $\overline{\sum_{k=1}^\infty A_k}$ can not be simplified to $\sum_{k=1}^\infty A_k$. At the end of Section 4, we will give an example in which $A_1,A_2,\cdots\subseteq[0,1]$ are non-empty finite sets such that $\delta_{A_1}*\delta_{A_2}*\cdots$ exists but $\sum_{k=1}^\infty A_k$ is not closed.
\end{remark}

For the one-dimensional case, we will deduce the following in addition.

\begin{corollary}\label{spt-1}
Let $\mu_1,\mu_2,\cdots\in\cP(\R)$ with compact supports such that $\mu_1*\mu_2*\cdots$ exists and
$$\lim_{k\to\infty}\big(\max(\spt\mu_k)-\min(\spt\mu_k)\big)=0.$$
\emph{(1)} If $\sum_{k=1}^\infty\min(\spt\mu_k)$ converges and $\sum_{k=1}^\infty\max(\spt\mu_k)$ diverges, then
$$\spt(\mu_1*\mu_2*\cdots)=\Big[\sum_{k=1}^\infty\min(\spt\mu_k),+\infty\Big).$$
\emph{(2)} If $\sum_{k=1}^\infty\min(\spt\mu_k)$ diverges and $\sum_{k=1}^\infty\max(\spt\mu_k)$ converges, then
$$\spt(\mu_1*\mu_2*\cdots)=\Big(-\infty,\sum_{k=1}^\infty\max(\spt\mu_k)\Big].$$
\emph{(3)} If both $\sum_{k=1}^\infty\min(\spt\mu_k)$ and $\sum_{k=1}^\infty\max(\spt\mu_k)$ diverge, then
$$\spt(\mu_1*\mu_2*\cdots)=\R.$$
\end{corollary}

Immediately we get the following.

\begin{corollary}\label{spt-1-cor}
Let $A_1,A_2,\cdots\subseteq\R$ be non-empty finite sets such that $\delta_{A_1}*\delta_{A_2}*\cdots$ exists and
$$\lim_{k\to\infty}(\max A_k-\min A_k)=0.$$
\emph{(1)} If $\sum_{k=1}^\infty\min A_k$ converges and $\sum_{k=1}^\infty\max A_k$ diverges, then
$$\spt(\delta_{A_1}*\delta_{A_2}*\cdots)=\Big[\sum_{k=1}^\infty\min A_k,+\infty\Big).$$
\emph{(2)} If $\sum_{k=1}^\infty\min A_k$ diverges and $\sum_{k=1}^\infty\max A_k$ converges, then
$$\spt(\delta_{A_1}*\delta_{A_2}*\cdots)=\Big(-\infty,\sum_{k=1}^\infty\max A_k\Big].$$
\emph{(3)} If both $\sum_{k=1}^\infty\min A_k$ and $\sum_{k=1}^\infty\max A_k$ diverge, then
$$\spt(\delta_{A_1}*\delta_{A_2}*\cdots)=\R.$$
\end{corollary}

All cases in Corollaries \ref{spt-1} and \ref{spt-1-cor} are possible. We give examples for Corollary \ref{spt-1-cor} in the following.
\begin{itemize}
\item[\emph{(1)}] For all $k\in\N$, let $A_k:=\{0,\frac{1}{k^3},\frac{2}{k^3},\cdots,\frac{k-1}{k^3},\frac{1}{k}\}$. Then $\lim_{k\to\infty}(\max A_k-\min A_k)=\lim_{k\to\infty}\frac{1}{k}=0$, $\sum_{k=1}^\infty\min A_k=0$ converges, $\sum_{k=1}^\infty\max A_k=\sum_{k=1}^\infty\frac{1}{k}$ diverges, and it follows from Corollary \ref{exist-cor} and
    $$\sum_{k=1}^\infty\frac{1}{\#A_k}\sum_{a\in A_k}\frac{|a|}{1+|a|}\le\sum_{k=1}^\infty\frac{1}{\#A_k}\sum_{a\in A_k}|a|=\sum_{k=1}^\infty\frac{1}{k+1}\Big(\frac{k-1}{2k^2}+\frac{1}{k}\Big)<\infty$$
    that $\delta_{A_1}*\delta_{A_2}*\cdots$ exists. By Corollary \ref{spt-1-cor} (1) we get $\spt(\delta_{A_1}*\delta_{A_2}*\cdots)=[0,+\infty)$.
\item[\emph{(2)}] For all $k\in\N$, let $A_k:=\{0,-\frac{1}{k^3},-\frac{2}{k^3},\cdots,-\frac{k-1}{k^3},-\frac{1}{k}\}$. Then $\lim_{k\to\infty}(\max A_k-\min A_k)=\lim_{k\to\infty}\frac{1}{k}=0$, $\sum_{k=1}^\infty\min A_k=\sum_{k=1}^\infty(-\frac{1}{k})$ diverges, $\sum_{k=1}^\infty\max A_k=0$ converges, and it follows from the same way as the above (1) that $\delta_{A_1}*\delta_{A_2}*\cdots$ exists. By Corollary \ref{spt-1-cor} (2) we get $\spt(\delta_{A_1}*\delta_{A_2}*\cdots)=(-\infty,0]$.
\item[\emph{(3)}] For all $k\in\N$, let $A_k:=\{0,\pm\frac{1}{k^3},\pm\frac{2}{k^3},\cdots,\pm\frac{k-1}{k^3},\pm\frac{1}{k}\}$. Then $\lim_{k\to\infty}(\max A_k-\min A_k)=\lim_{k\to\infty}\frac{2}{k}=0$, both $\sum_{k=1}^\infty\min A_k=\sum_{k=1}^\infty(-\frac{1}{k})$ and $\sum_{k=1}^\infty\max A_k=\sum_{k=1}^\infty\frac{1}{k}$ diverge, and it follows in a way similar to the above (1) that $\delta_{A_1}*\delta_{A_2}*\cdots$ exists. By Corollary \ref{spt-1-cor} (3) we get $\spt(\delta_{A_1}*\delta_{A_2}*\cdots)=\R$.
\end{itemize}

\subsection{Infinite sums of union sets}
\indent

In Theorem \ref{spt} (2), we know that when $\mu_1*\mu_2*\cdots$ exists with compact support, we have $\spt(\mu_1*\mu_2*\cdots)=\sum_{k=1}^\infty\spt\mu_k$. But in general, when $\spt(\mu_1*\mu_2*\cdots)$ is not necessarily compact, Remark \ref{spt-re} tells us that $\sum_{k=1}^\infty\spt\mu_k$ may not be closed, and we can only get $\spt(\mu_1*\mu_2*\cdots)=\overline{\sum_{k=1}^\infty\spt\mu_k}$ from Theorem \ref{spt} (1). In order to obtain $\spt(\mu_1*\mu_2*\cdots)=\sum_{k=1}^\infty\spt\mu_k$ to provide convenience for further research on $\spt(\mu_1*\mu_2*\cdots)$, we should study under what conditions $\sum_{k=1}^\infty\spt\mu_k$ is a closed set, especially when $\spt(\mu_1*\mu_2*\cdots)$ is not compact and equivalently $\sum_{k=1}^\infty\spt\mu_k$ is not bounded. Therefore, in this subsection we study infinite sums of union sets of the form $\sum_{k=1}^\infty(A_k\cup A_k')$, where $\{A_k'\}_{k\ge1}$ are far from the original point in some sense, including the conditions under which $\sum_{k=1}^\infty(A_k\cup A_k')$ is closed and the fractal dimensions of $\sum_{k=1}^\infty(A_k\cup A_k')$. These tools will be applied to the non-compact supports of spectral measures in the next subsection.

For $s\in[0,d]$, use $\cH^s$ and $\cP^s$ to denote the $s$-dimensional Hausdorff measure and packing measure respectively \cite{F14}. Besides, we use $\dim_H$ and $\dim_P$ to denote the Hausdorff dimension and packing dimension respectively. Regard $\inf\emptyset=\min\emptyset=+\infty$ and $\sup\emptyset=\max\emptyset=-\infty$ throughout this paper.

As the third main result in this paper, the following Theorem \ref{sum union} on the one hand gives some relatively weak conditions for $\sum_{k=1}^\infty(A_k\cup A_k')$ to be closed, and on the other hand provides a way to simplify the calculation for the Hausdorff and packing dimensions of $\sum_{k=1}^\infty(A_k\cup A_k')$ by only considering $\sum_{k=1}^\infty A_k$, which allows us to use classical results on fractal dimensions for bounded sets (as we will see in the proof of Corollary \ref{dim-sum}) instead of dealing with unbounded sets under certain circumstances.

\begin{theorem}\label{sum union}
For each $k\in\N$, let $A_k,A_k'\subseteq\R^d$ where $A_k$ is non-empty and $A_k'$ may be empty.
\vspace{5pt}
\newline\emph{(1)} Suppose that $A_k\cup A_k'$ is closed for every $k\in\N$, and for every $j\in\{1,\cdots,d\}$
\begin{equation}\label{inf-sup con}
\text{both }\sum_{k=1}^\infty\inf(A_k)_j\text{ and }\sum_{k=1}^\infty\sup(A_k)_j\text{ converge}
\end{equation}
and
\begin{equation}\label{inf-sup or}
\inf_{k\in\N}\inf(A_k')_j>-\infty\quad\text{or}\quad\sup_{k\in\N}\sup(A_k')_j<+\infty.
\end{equation}
If $\lim_{k\to\infty}\inf_{a\in A_k'}|a|=+\infty$, then
$$\sum_{k=1}^\infty\big(A_k\cup A_k'\big)\text{ is closed.}$$
\emph{(2)} Suppose that $A_k'$ is at most countable for every $k\in\N$. If $\varliminf_{k\to\infty}\inf_{a\in A_k'}|a|>0$, then
\begin{itemize}
\item[\textcircled{\footnotesize{$1$}}] $\cH^s\big(\sum_{k=1}^\infty(A_k\cup A_k')\big)=0$ if and only if $\cH^s\big(\sum_{k=1}^\infty A_k\big)=0$ for all $s\in[0,d]$, and
    $$\dim_H\sum_{k=1}^\infty\big(A_k\cup A_k'\big)=\dim_H\sum_{k=1}^\infty A_k;$$
\item[\textcircled{\footnotesize{$2$}}] $\cP^s\big(\sum_{k=1}^\infty(A_k\cup A_k')\big)=0$ if and only if $\cP^s\big(\sum_{k=1}^\infty A_k\big)=0$ for all $s\in[0,d]$, and
    $$\dim_P\sum_{k=1}^\infty\big(A_k\cup A_k'\big)=\dim_P\sum_{k=1}^\infty A_k.$$
\end{itemize}
\end{theorem}

In Theorem \ref{sum union} (1), the condition that (\ref{inf-sup or}) holds for every $j\in\{1,\cdots,d\}$ is equivalent to
$$\exists c_1,c_2\in\R\text{ and }X_1,\cdots,X_d\in\Big\{[c_1,+\infty),(-\infty,c_2]\Big\}\text{ s.t. }\bigcup_{k=1}^\infty A_k'\subseteq X_1\times\cdots\times X_d.$$
This is not difficult to satisfy. For example
$$\bigcup_{k=1}^\infty A_k'\subseteq[-c,\infty)^d\quad\text{or}\quad\bigcup_{k=1}^\infty A_k'\subseteq(-\infty,c]^d\quad\text{for some }c\ge0.$$

The proof of Theorem \ref{sum union} relies on the decomposition of $\sum_{k=1}^\infty(A_k\cup A_k')$ in Propositions \ref{decom}. Besides, a translation technique similar to the one in the proof of Proposition \ref{bounded-closed} (2) and technical estimations on the absolute values of specific sums of the coordinate components of certain summable points play a key role in the proof of Theorem \ref{sum union} (1).

Every condition in Theorem \ref{sum union} can not be omitted. Otherwise one can construct examples such that the conclusions may not hold. Write $S:=\sum_{k=1}^\infty(A_k\cup A_k')$. We note the following for Theorem \ref{sum union} (1).
\begin{itemize}
\item[\textcircled{\footnotesize{$1$}}] The condition that $\sum_{k=1}^\infty\sup(A_k)_j$ converges for every $j\in\{1,\cdots,d\}$ can not be omitted. Otherwise, we can take $d=1$, $A_k=\big\{0,\frac{k}{k+1}\big\}$ and $A_k'=\big\{k+1\big\}$ (or $\emptyset$) for all $k\in\N$. Then $\sum_{k=1}^\infty\inf A_k=0$ converges, $\inf_{k\in\N}\inf A_k'>-\infty$, and $\lim_{k\to\infty}\inf_{a\in A_k'}|a|=+\infty$. But $S$ is not closed, since one can easily verify $1\in\overline{S}\setminus S$.
\item[\textcircled{\footnotesize{$2$}}] The condition that (\ref{inf-sup or}) holds for every $j\in\{1,\cdots,d\}$ can not be omitted. Otherwise, we can take $d=1$, $A_k=\big\{0\big\}$ and $A_k'=\big\{(-1)^k(k+\frac{1}{2^k})\big\}$ for all $k\in\N$. Then (\ref{inf-sup con}) holds for $j=1$, and $\lim_{k\to\infty}\inf_{a\in A_k'}|a|=+\infty$. But $S$ is not closed, since one can easily verify $1\in\overline{S}\setminus S$.
\item[\textcircled{\footnotesize{$3$}}] The condition $\lim_{k\to\infty}\inf_{a\in A_k'}|a|=+\infty$ can not be weakened to $\varlimsup_{k\to\infty}\inf_{a\in A_k'}|a|=+\infty$ and $\varliminf_{k\to\infty}\inf_{a\in A_k'}|a|>0$. Otherwise, we can take $d=1$, $A_k=\big\{0\big\}$ for all $k\in\N$, $A_k'=\big\{\frac{k}{k+1}\big\}$ for all odd $k\in\N$ and $A_k'=\big\{k\big\}$ for all even $k\in\N$. Then (\ref{inf-sup con}) and (\ref{inf-sup or}) hold for $j=1$, $\varlimsup_{k\to\infty}\inf_{a\in A_k'}|a|=+\infty$ and $\varliminf_{k\to\infty}\inf_{a\in A_k'}|a|=1>0$. But $S$ is not closed, since one can easily verify $1\in\overline{S}\setminus S$.
\end{itemize}

In the following corollary, which will be applied in the next subsection, we give results on a special class of infinite sums which may not be bounded, including conditions under which they are closed and concrete formulae for their Hausdorff and packing dimensions. Here we use $\cL^d$ to denote the $d$-dimensional Lebesgue measure.

\begin{corollary}\label{dim-sum} Let $d\in\N$. For each $k\in\N$, let $c_k\ge1$ and $C_k\ge c_k+1$ be real numbers, and $B_k\subseteq\R^d$ with $\emptyset\neq B_k\cap[0,c_k]^d\subseteq\Z^d$.
\vspace{5pt}
\newline\emph{(1)} Suppose that $B_1,B_2,\cdots$ are all closed, and for every $j\in\{1,\cdots,d\}$,
\begin{equation}\label{cor inf-sup or}
\inf_{k\in\N}\big(C_1^{-1}\cdots C_k^{-1}\inf(B_k)_j\big)>-\infty\quad\text{or}\quad\sup_{k\in\N}\big(C_1^{-1}\cdots C_k^{-1}\sup(B_k)_j\big)<+\infty.
\end{equation}
If
$$\lim_{k\to\infty}\frac{\inf\big\{|x|:x\in B_k\setminus[0,c_k]^d\big\}}{C_1\cdots C_k}=+\infty,$$
then
$$\sum_{k=1}^\infty C_1^{-1}\cdots C_k^{-1}B_k\text{ is closed.}$$
\emph{(2)} Suppose that $B_k$ is at most countable for every $k\in\N$ and
$$\varliminf_{k\to\infty}\frac{\inf\big\{|x|:x\in B_k\setminus[0,c_k]^d\big\}}{C_1\cdots C_k}>0.$$
\begin{itemize}
\item[\textcircled{\footnotesize{$1$}}] If $\prod_{k=1}^\infty\frac{\#(B_k\cap[0,c_k]^d)}{C_k^d}=0$, then $\cL^d(\sum_{k=1}^\infty C_1^{-1}\cdots C_k^{-1}B_k)=0$.
\item[\textcircled{\footnotesize{$2$}}] If $\lim_{k\to\infty}\frac{\log C_k}{\log C_1\cdots C_k}=0$, then
$$\dim_H\sum_{k=1}^\infty C_1^{-1}\cdots C_k^{-1}B_k=\varliminf_{k\to\infty}\frac{\log\#(B_1\cap[0,c_1]^d)\cdots\#(B_k\cap[0,c_k]^d)}{\log C_1\cdots C_k},$$
and
$$\dim_P\sum_{k=1}^\infty C_1^{-1}\cdots C_k^{-1}B_k=\varlimsup_{k\to\infty}\frac{\log\#(B_1\cap[0,c_1]^d)\cdots\#(B_k\cap[0,c_k]^d)}{\log C_1\cdots C_k}.$$
\end{itemize}
\end{corollary}

\subsection{Spectral measures with and without compact supports of arbitrary dimensions}
\indent

In \cite[Theorem 1.7]{LMW22}, Li, Miao and Wang showed that there are spectral measures without compact supports of arbitrary Hausdorff and packing dimensions in $\R$. To get this result, they used \cite[Theorem 1.4]{LMW22} to construct a special class of spectral measures with the form of infinite convolutions in $\R$, proved that the supports of these infinite convolutions are countable unions of specific compact sets \cite[Proposition 5.1]{LMW22}, and then used these specific compact sets in the proof of the Hausdorff and packing dimension formulae for the supports of the corresponding infinite convolutions \cite[Proposition 5.3]{LMW22}.

Different from their ideas, after using Theorem \ref{spectrality} to construct spectral measures with the form of infinite convolutions in $\R^d$, we systematically study the supports of general infinite convolutions in Subsection 1.2 and the infinite sums of union sets in Subsection 1.3. Finally in this subsection, as an application of the tools developed in the above subsections, we deduce that there are spectral measures with and without compact supports of arbitrary Hausdorff and packing dimensions in $\R^d$, generalizing \cite[Theorem 1.7]{LMW22} from $\R$ to $\R^d$ in a different way.

First, taking $c_k:=m_k-1$ and $C_k:=N_k$ for all $k\in\N$ in Corollary \ref{dim-sum}, by Theorem \ref{spectrality} and Corollary \ref{spt-cor} (1) we can get the following immediately.

\begin{corollary}\label{dim-spt}
Let $d\in\N$. For each $k\in\N$, let $N_k\ge m_k\ge2$ be integers with $m_k\mid N_k$ and $B_k\subseteq\{0,1,2,\cdots\}^d$ be a finite set with $G_k:=B_k\cap\{0,1,\cdots,m_k-1\}^d\neq\emptyset$. Suppose
\begin{equation}\label{cor lim infty}
\lim_{k\to\infty}\frac{\min\{|x|:x\in B_k\setminus\{0,1,\cdots,m_k-1\}^d\}}{N_1\cdots N_k}=+\infty,
\end{equation}
$$\lim_{k\to\infty}\frac{\log N_k}{\log N_1\cdots N_k}=0\quad\text{and}\quad\prod_{k=1}^\infty\frac{\#G_k}{N_k^d}=0,$$
and suppose that $\{B_k\}_{k\ge1}$ is a sequence of nearly $d$-th power lattices with respect to $\{m_k\}_{k\ge1}$ and the sequence of $d\times d$ diagonal matrices $\{\text{diag}(N_k,\cdots,N_k)\}_{k\ge1}$. Then the infinite convolution
$$\mu=\delta_{N_1^{-1}B_1}*\delta_{N_1^{-1}N_2^{-1}B_2}*\delta_{N_1^{-1}N_2^{-1}N_3^{-1}B_3}*\cdots$$
exists, is a singular spectral measure with a spectrum in $\Z^d$, $\spt\mu=\sum_{k=1}^\infty N_1^{-1}\cdots N_k^{-1}B_k$,
$$\dim_H\spt\mu=\varliminf_{k\to\infty}\frac{\log\#G_1\cdots\#G_k}{\log N_1\cdots N_k}\quad\text{and}\quad\dim_P\spt\mu=\varlimsup_{k\to\infty}\frac{\log\#G_1\cdots\#G_k}{\log N_1\cdots N_k}.$$
\end{corollary}

For spectral measures with compact supports in $\R^d$, we have the following. The similar result for spectral measures on $\R$ can be found in \cite{DS15}.

\begin{corollary}\label{inter-com}
Let $d\in\N$. For any $\alpha,\beta\in[0,d]$ with $\alpha\le\beta$, there exists a singular spectral measure $\mu$ on $\R^d$ with a spectrum in $\Z^d$ and with compact support such that
$$\dim_H\spt\mu=\alpha\quad\text{and}\quad\dim_P\spt\mu=\beta.$$
\end{corollary}

Finally we consider spectral measures without compact supports and generalize \cite[Theorem 1.7]{LMW22} to $\R^d$.

\begin{corollary}\label{inter-non}
Let $d\in\N$. For any $\alpha,\beta\in[0,d]$ with $\alpha\le\beta$, there exists a singular spectral measure $\mu$ on $\R^d$ with a spectrum in $\Z^d$ and without compact support such that
$$\dim_H\spt\mu=\alpha\quad\text{and}\quad\dim_P\spt\mu=\beta.$$
\end{corollary}

This paper is organized as follows. In the next section we give some preliminaries. Then we prove Theorem \ref{spectrality} in Section 3, prove Theorem \ref{spt}, Corollary \ref{spt-1} and give an example for Remark \ref{spt-re} in Section 4, prove Theorem \ref{sum union} and Corollary \ref{dim-sum} in Section 5, and finally deduce Corollaries \ref{inter-com} and \ref{inter-non} in Section 6.

\section{Preliminaries}
\indent

Recall that $\cP(\R^d)$ denotes the set of all Borel probability measures on $\R^d$. For $\mu\in\cP(\R^d)$ the \textit{Fourier transform} of $\mu$ is defined by
$$\widehat{\mu}(\xi):=\int_{\R^d}e^{-2\pi i<\xi,x>}d\mu(x)\quad\text{for all }\xi\in\R^d.$$
It is well-known that $\widehat{\mu}$ is a bounded, continuous function with $\widehat{\mu}(\mathbf{0})=1$. See for example \cite[Theorem 13.1]{JP03}.

For $\mu,\mu_1,\mu_2,\cdots\in\cP(\R^d)$, we say that $\mu_n$ \textit{converges weakly} to $\mu$ if
$$\int_{\R^d}f\text{ }d\mu_n\to\int_{\R^d}f\text{ }d\mu\quad\text{for all }f\in C_b(\R^d),$$
where $C_b(\R^d)$ denotes the set of all bounded continuous functions on $\R^d$.

For $\mu,\nu\in\cP(\R^d)$, the \textit{convolution} $\mu*\nu$ is defined by
$$\mu*\nu(B):=\int_{\R^d}\mu(B-y)\text{ }d\nu(y)=\int_{\R^d}\nu(B-x)\text{ }d\mu(x)\quad\text{for every Borel set }B\subseteq\R^d.$$
Equivalently, $\mu*\nu$ is the unique Borel probability measure on $\R^d$ satisfying
$$\int_{\R^d}f(x)\text{ }d\mu*\nu(x)=\int_{\R^d\times\R^d}f(x+y)\text{ }d\mu\times\nu(x,y)\quad\text{for all }f\in C_b(\R^d).$$
It is straightforward to see $\widehat{\mu*\nu}(\xi)=\widehat{\mu}(\xi)\widehat{\nu}(\xi)$ for all $\xi\in\R^d$.

On the existence of general infinite convolutions, we need the following theorem, in which statement (1) is a consequence of Kolmogorov's three series theorem (see for examples \cite[Theorem 34]{JW35} and \cite[Theorem 3.1]{LMW22}), and statements (2) and (3) follow from a similar proof of \cite[Theorem 1.1]{LMW22}. Here we use $B(r)$ to denote the closed ball centered at the original point $\mathbf{0}\in\R^d$ with radius $r$.

\begin{theorem}\label{exist} Let $d\in\N$ and $\mu_1,\mu_2,\cdots\in\cP(\R^d)$.
\begin{itemize}
\item[\emph{(1)}] Fix a constance $r>0$. The infinite convolution $\mu_1*\mu_2*\cdots$ exists if and only if the following three series all converge:
$$\textcircled{\footnotesize{$1$}}\text{ }\sum_{k=1}^\infty\mu_k\big(\R^d\setminus B(r)\big);\quad\quad\textcircled{\footnotesize{$2$}}\text{ }\sum_{k=1}^\infty\int_{B(r)}x\text{ }\mathrm{d}\mu_k(x);$$
$$\textcircled{\footnotesize{$3$}}\text{ }\sum_{k=1}^\infty\Big(\int_{B(r)}|x|^2\text{ }\mathrm{d}\mu_k(x)-\Big|\int_{B(r)}x\text{ }\mathrm{d}\mu_k(x)\Big|^2\Big).$$
\item[\emph{(2)}] If
$$\sum_{k=1}^\infty\int_{\R^d}\frac{|x|}{1+|x|}\text{ }\mathrm{d}\mu_k(x)<\infty,$$
then $\mu_1*\mu_2*\cdots$ exists.
\item[\emph{(3)}] Suppose $\spt\mu_k\subseteq[0,\infty)^d$ for all $k\in\N$. Then $\mu_1*\mu_2*\cdots$ exists if and only if
$$\sum_{k=1}^\infty\int_{\R^d}\frac{|x|}{1+|x|}\text{ }\mathrm{d}\mu_k(x)<\infty.$$
\end{itemize}
\end{theorem}

We emphasize that in Theorem \ref{exist} (2), without the condition $\spt\mu_k\subseteq[0,\infty)^d$ for all $k\in\N$, the convergence of $\sum_{k=1}^\infty\int_{\R^d}\frac{|x|}{1+|x|}\text{ }\mathrm{d}\mu_k(x)$ is enough to guarantee the existence of the infinite convolution $\mu_1*\mu_2*\cdots$. Immediately we get the following, which will be used in the proof of Theorem \ref{spectrality}.

\begin{corollary}\label{exist-cor} Let $d\in\N$ and $A_1,A_2,\cdots\subseteq\R^d$ be non-empty finite sets. If
$$\sum_{k=1}^\infty\frac{1}{\#A_k}\sum_{a\in A_k}\frac{|a|}{1+|a|}<\infty,$$
then $\delta_{A_1}*\delta_{A_2}*\cdots$ exists.
\end{corollary}

The equi-positivity property was introduced in \cite{AFL19,DHL19} and used to study the spectrality of fractal measures with compact supports. Then it was generalized to the following version in \cite{LMW24,LW24} which can also be used to study the spectrality of measures without compact supports.

\begin{definition}[Equi-positive]
A family $\Phi\subseteq\cP(\R^d)$ is called \textit{equi-positive} if there exist $\epsilon>0$ and $\delta>0$ such that for each $x\in[0,1)^d$ and $\mu\in\Phi$, there exists an integer vector $k_{x,\mu}\in\Z^d$ such that
$$|\widehat{\mu}(x+y+k_{x,\mu})|\ge\epsilon$$
for all $y\in\R^d$ with $|y|<\delta$, where $k_{x,\mu}=\mathbf{0}$ for $x=\mathbf{0}$.
\end{definition}

Given a sequence $\{R_k\}_{k\ge1}$ of $d\times d$ invertible real matrices and a sequence $\{B_k\}_{k\ge1}$ of non-empty finite subsets of $\R^d$, suppose that the infinite convolution
\begin{equation}\label{mu}
\mu:=\delta_{R_1^{-1}B_1}*\delta_{R_1^{-1}R_2^{-1}B_2}*\delta_{R_1^{-1}R_2^{-1}R_3^{-1}B_3}*\cdots
\end{equation}
exists. For each $n\in\N$, write the \textit{tail} of $\mu$ by
$$\mu_{>n}:=\delta_{R_1^{-1}R_2^{-1}\cdots R_{n+1}^{-1}B_{n+1}}*\delta_{R_1^{-1}R_2^{-1}\cdots R_{n+2}^{-1}B_{n+2}}*\delta_{R_1^{-1}R_2^{-1}\cdots R_{n+3}^{-1}B_{n+3}}*\cdots$$
and define the \textit{push-forward measure} of $\mu_{>n}$ by
\begin{equation}\label{nu >n}
\nu_{>n}(\text{ }\mathbf{\cdot}\text{ }):=\mu_{>n}(R_1^{-1}R_2^{-1}\cdots R_n^{-1}\text{ }\mathbf{\cdot}\text{ }),
\end{equation}
that is,
$$\nu_{>n}=\delta_{R_{n+1}^{-1}B_{n+1}}*\delta_{R_{n+1}^{-1}R_{n+2}^{-1}B_{n+2}}*\delta_{R_{n+1}^{-1}R_{n+2}^{-1}R_{n+3}^{-1}B_{n+3}}*\cdots.$$
In the proof of \cite[Theorem 1.1]{LW24}, Li and Wang actually showed the following. (See \cite[Theorem 1.4]{LMW24} and \cite[Theorem 4.2]{LMW22} for the version in $\R$.)

\begin{theorem}[\cite{LW24} ]\label{cite}
Let $d\in\N$ and $\{(R_k,B_k)\}_{k\ge1}$ be a sequence of admissible pairs in $\R^d$. Suppose that the infinite convolution $\mu$ defined in (\ref{mu}) exists, and
$$\lim_{n\to\infty}|(R_n^T)^{-1}\cdots(R_1^T)^{-1}x|=0\quad\text{for all }x\in\R^d.$$
Let $\{\nu_{>n}\}_{n\ge1}$ be defined in (\ref{nu >n}). If there exists a subsequence $\{\nu_{>n_j}\}_{j\ge1}$ which is equi-positive, then $\mu$ is a spectral measure with a spectrum in $\Z^d$.
\end{theorem}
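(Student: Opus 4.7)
The plan is to construct an explicit candidate spectrum $\Lambda\subseteq\Z^d$ from the dual sets of the Hadamard triples, verify orthonormality in $L^2(\mu)$ via the factorization $\widehat{\mu}=\widehat{\mu_n}\widehat{\mu_{>n}}$, and then prove completeness through the Jorgensen--Pedersen criterion $Q_\Lambda\equiv 1$, where the equi-positive subsequence and the contraction hypothesis interact to propagate the trivial equality $Q_\Lambda(\mathbf{0})=1$ to every point.

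First I would fix, for each $k$, a set $L_k\subseteq\Z^d$ with $\mathbf{0}\in L_k$ and $\#L_k=\#B_k$ so that $(R_k,B_k,L_k)$ is a Hadamard triple, and define the nested sets
\[
\Lambda_n:=L_1+R_1^T L_2+R_1^T R_2^T L_3+\cdots+R_1^T R_2^T\cdots R_{n-1}^T L_n\subseteq\Z^d,\qquad \Lambda:=\bigcup_{n\ge 1}\Lambda_n.
\]
Using the pushforward identity $\widehat{\delta_{R_1^{-1}\cdots R_k^{-1}B_k}}(\xi)=\widehat{\delta_{R_k^{-1}B_k}}((R_1^T\cdots R_{k-1}^T)^{-1}\xi)$, the $\Z^d$-periodicity of $\widehat{\delta_{R_k^{-1}B_k}}$ (from $B_k\subseteq\Z^d$), and the Hadamard identity $\sum_{l\in L_k}|\widehat{\delta_{R_k^{-1}B_k}}(\eta+l)|^2=1$, a telescoping argument gives
\[
\sum_{\lambda\in\Lambda_n}|\widehat{\mu_n}(\xi+\lambda)|^2=1\quad\text{for all }\xi\in\R^d,
\]
so each $\Lambda_n$ is a spectrum of $\mu_n$. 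Orthonormality of $\{e^{-2\pi i\langle\lambda,\cdot\rangle}\}_{\lambda\in\Lambda}$ in $L^2(\mu)$ then follows from $\mu=\mu_n*\mu_{>n}$ and the fact that any two distinct elements of $\Lambda$ lie in a common $\Lambda_n$, giving $\widehat{\mu}(\lambda-\lambda')=\widehat{\mu_n}(\lambda-\lambda')\widehat{\mu_{>n}}(\lambda-\lambda')=0$. In particular $Q_\Lambda(\xi):=\sum_{\lambda\in\Lambda}|\widehat{\mu}(\xi+\lambda)|^2\le 1$ and $Q_\Lambda(\mathbf{0})=1$; since $\Lambda\subseteq\Z^d$, the function $Q_\Lambda$ is $\Z^d$-periodic, so it suffices to prove $Q_\Lambda\equiv 1$ on $[0,1)^d$.

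For the main step I would argue by contradiction. Suppose $Q_\Lambda(\xi^\ast)\le 1-2\tau$ at some $\xi^\ast\in[0,1)^d$. Writing $T_n:=(R_n^T)^{-1}\cdots(R_1^T)^{-1}$, the pushforward definition of $\nu_{>n}$ gives $\widehat{\mu_{>n}}(\eta)=\widehat{\nu_{>n}}(T_n\eta)$, hence
\[
Q_\Lambda(\xi^\ast)\ge\sum_{\lambda\in\Lambda_{n_j}}|\widehat{\mu_{n_j}}(\xi^\ast+\lambda)|^2\bigl|\widehat{\nu_{>n_j}}\bigl(T_{n_j}(\xi^\ast+\lambda)\bigr)\bigr|^2.
\]
Let $x_j\in[0,1)^d$ be the fractional part of $T_{n_j}\xi^\ast$. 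The equi-positivity of $\{\nu_{>n_j}\}$ provides $\epsilon,\delta>0$ and integer vectors $k_{x_j}\in\Z^d$ with $|\widehat{\nu_{>n_j}}(x_j+y+k_{x_j})|\ge\epsilon$ for all $|y|<\delta$. For those $\lambda\in\Lambda_{n_j}$ whose shifted image $T_{n_j}\lambda$ (plus the integer part of $T_{n_j}\xi^\ast$) lands within distance $\delta$ of $k_{x_j}$, the tail factor is at least $\epsilon^2$. The contraction hypothesis $|T_{n_j}x|\to 0$ ensures that such $\lambda$ are plentiful inside $\Lambda_{n_j}$, and combining this with the mass-conservation identity $\sum_{\lambda\in\Lambda_{n_j}}|\widehat{\mu_{n_j}}(\xi^\ast+\lambda)|^2=1$ would yield a lower bound $Q_\Lambda(\xi^\ast)\ge\epsilon^2(1-o_j(1))$. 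Iterating this argument with $\xi^\ast$ replaced by $T_{n_j}\xi^\ast\to\mathbf{0}$ and using continuity of $Q_\Lambda$ at $\mathbf{0}$ together with $Q_\Lambda(\mathbf{0})=1$ would force $Q_\Lambda(\xi^\ast)>1-2\tau$, a contradiction.

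The hard part is the last paragraph: arranging the integer translates $k_{x_j}$ and the matching sub-collections of $\Lambda_{n_j}$ so that the equi-positive lower bound applies to \emph{enough} total mass of $\sum_\lambda|\widehat{\mu_{n_j}}(\xi^\ast+\lambda)|^2$. This is the technical core where the integrality of $\Lambda\subseteq\Z^d$, the exact pushforward normalization built into $\nu_{>n}$, the Hadamard-triple conservation identity at level $n_j$, and the contraction of $T_{n_j}$ must be aligned simultaneously; by contrast, the construction of $\Lambda$, the inductive proof of the Hadamard identity for $\mu_n$, and the orthogonality bookkeeping are essentially formal once the framework is in place.
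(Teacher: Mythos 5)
First, a point of reference: the paper does not prove this statement at all --- it is imported verbatim from \cite{LW24} (``in the proof of [Theorem 1.1] Li and Wang actually showed the following''), so there is no in-paper proof to compare against and your proposal has to stand on its own. Your framework is indeed the standard one behind that theorem: candidate spectrum $\Lambda=\bigcup_n\Lambda_n$ built from dual sets $L_k$ of the Hadamard triples, orthogonality via $\widehat{\mu}=\widehat{\mu_n}\,\widehat{\mu_{>n}}$ and the mass identity $\sum_{\lambda\in\Lambda_n}|\widehat{\mu_n}(\xi+\lambda)|^2=1$, and completeness via the Jorgensen--Pedersen criterion $Q_\Lambda\equiv1$. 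That part is fine (one small slip: $\widehat{\delta_{R_k^{-1}B_k}}$ is $R_k^T\Z^d$-periodic, not $\Z^d$-periodic; what the telescoping actually uses is that $\widehat{\delta_{R_1^{-1}\cdots R_k^{-1}B_k}}$ is $R_1^T\cdots R_k^T\Z^d$-periodic).

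The genuine gap is in the completeness step, which is the entire content of the theorem, and it is twofold. (i) You fix $\Lambda$ in advance and then hope that ``plentifully many'' $\lambda\in\Lambda_{n_j}$ have $T_{n_j}\lambda$ landing, mod $\Z^d$, within $\delta$ of the single translate $k_{x_j}$ supplied by equi-positivity. Nothing forces this: $T_{n_j}\lambda$ ranges over points of the form $(R_{n_j}^T)^{-1}l_{n_j}+\cdots$, which have no reason to cluster near $k_{x_j}$, and it is classical (e.g.\ maximal orthogonal sets of the quarter Cantor measure that are not spectra) that a pre-assigned canonical candidate can fail. The correct move is the reverse: the equi-positivity integers must be \emph{built into} $\Lambda$ --- each $\lambda\in\Lambda_{n_j}$ is replaced by $\lambda+R_1^T\cdots R_{n_j}^T m_\lambda$ with $m_\lambda\in\Z^d$ chosen from equi-positivity applied to the fractional part of $T_{n_j}\lambda$; the $R_1^T\cdots R_{n_j}^T\Z^d$-periodicity of $|\widehat{\mu_{n_j}}|$ preserves the mass identity, the $|y|<\delta$ slack absorbs $T_{n_j}\xi$ once $\|T_{n_j}\|<\delta$ (this is where the contraction hypothesis enters), and $k_{\mathbf{0},\mu}=\mathbf{0}$ keeps $\mathbf{0}\in\Lambda$ and the nesting consistent. (ii) Even after that correction, the direct estimate only yields $Q_\Lambda(\xi)\geqslant\epsilon^2$, and your proposed bootstrap --- ``iterate with $\xi^\ast$ replaced by $T_{n_j}\xi^\ast\to\mathbf{0}$ and use continuity of $Q_\Lambda$ at $\mathbf{0}$'' --- is not an argument: $Q_\Lambda$ is $\Z^d$-periodic but carries no self-similarity under $T_{n_j}$, so there is no inequality relating $Q_\Lambda(\xi^\ast)$ to $Q_\Lambda(T_{n_j}\xi^\ast)$, and $Q_\Lambda$ is in general only lower semicontinuous. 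Upgrading $\epsilon^2$ to $1$ is exactly the nontrivial recursive/telescoping estimate of \cite{LW24} (controlling the increments $\sum_{\Lambda^{(j+1)}\setminus\Lambda^{(j)}}|\widehat{\mu}(\xi+\lambda)|^2$ against $1-\sum_{\Lambda^{(j)}}|\widehat{\mu_{n_{j+1}}}(\xi+\lambda)|^2$), and this is precisely the piece your proposal defers as ``the technical core'' without supplying it.
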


For $B_1,B_2,\cdots\subseteq\R^d$, define
$$\lim_{n\to\infty}B_n:=\Big\{x\in\R^d:\exists b_n\in B_n\text{ for each }n\in\N\text{ s.t. }x=\lim_{n\to\infty}b_n\Big\}.$$

The following old result \cite[Theorem 3]{JW35} given by Jessen and Wintner in 1935 will be used in the proof of Theorem \ref{spt}.

\begin{theorem}[\cite{JW35} ]\label{old} Let $d\in\N$ and $\mu_1,\mu_2,\cdots\in\cP(\R^d)$ such that $\mu_1*\mu_2*\cdots$ exists. Then
$$\spt(\mu_1*\mu_2*\cdots)=\lim_{n\to\infty}(\spt\mu_1+\cdots+\spt\mu_n).$$
\end{theorem}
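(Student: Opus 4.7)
My plan is to prove both inclusions by exploiting the decomposition $\mu = M_n * \tau_n$, writing $M_n := \mu_1 * \cdots * \mu_n$ for the $n$-th partial convolution and $\tau_n := \mu_{n+1} * \mu_{n+2} * \cdots$ for the tail (whose existence follows from that of $\mu$, e.g.\ by viewing the $\mu_k$ as laws of independent random variables and invoking L\'evy's theorem on almost sure convergence of sums of independents). I lean on two preliminary facts. First, $\spt M_n = \overline{B_n}$ where $B_n := \spt \mu_1 + \cdots + \spt \mu_n$; this follows by induction from the identity $\spt(\nu * \rho) = \overline{\spt \nu + \spt \rho}$, itself a case of $\spt(f_* \nu) = \overline{f(\spt \nu)}$ applied to the addition map $\R^d \times \R^d \to \R^d$. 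Second, $\tau_n$ converges weakly to $\delta_{\mathbf{0}}$; this follows from $\widehat{\mu}(\xi) = \widehat{M_n}(\xi)\,\widehat{\tau_n}(\xi)$, the pointwise convergence $\widehat{M_n} \to \widehat{\mu}$, the non-vanishing of $\widehat{\mu}$ in a neighbourhood of the origin (by continuity and $\widehat{\mu}(\mathbf{0}) = 1$), and L\'evy's continuity theorem.

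\textbf{Inclusion $\subseteq$.} Given $x \in \spt \mu$ and $\varepsilon > 0$, the Portmanteau theorem gives $\liminf_n M_n(B(x, \varepsilon)) \geqslant \mu(B(x, \varepsilon)) > 0$, so $M_n(B(x, \varepsilon)) > 0$ eventually. By the first preliminary fact this forces $\overline{B_n} \cap B(x, \varepsilon) \neq \emptyset$, and a routine closure-density argument promotes this to $B_n \cap B(x, \varepsilon) \neq \emptyset$. I then construct the required sequence $b_n \to x$ by choosing integers $N_1 < N_2 < \cdots$ so that $B_n \cap B(x, 1/k) \neq \emptyset$ for every $n \geqslant N_k$, selecting $b_n$ in this intersection whenever $N_k \leqslant n < N_{k+1}$, and picking any $b_n \in B_n$ for $n < N_1$.

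\textbf{Inclusion $\supseteq$.} Suppose $b_n \in B_n$ and $b_n \to x$, and fix $\varepsilon > 0$. Choose $n$ large enough that $|b_n - x| < \varepsilon/4$ and $\tau_n(B(\mathbf{0}, \varepsilon/2)) > 1/2$ (both possible by the preliminary facts). Since $b_n \in B_n \subseteq \overline{B_n} = \spt M_n$, every neighbourhood of $b_n$ has positive $M_n$-mass; in particular $M_n(B(x, \varepsilon/2)) \geqslant M_n(B(b_n, \varepsilon/4)) > 0$. Using $\mu = M_n * \tau_n$ and the inclusion $B(x, \varepsilon/2) \subseteq B(x - y, \varepsilon)$ valid whenever $|y| < \varepsilon/2$,
\begin{equation*}
\mu(B(x, \varepsilon)) = \int_{\R^d} M_n(B(x - y, \varepsilon))\, d\tau_n(y) \geqslant M_n(B(x, \varepsilon/2)) \cdot \tau_n(B(\mathbf{0}, \varepsilon/2)) > 0,
\end{equation*}
whence $x \in \spt \mu$.

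\textbf{Main obstacle.} The delicate direction is $\supseteq$. The potential trap is the \emph{false} general principle that $y_n \in \spt M_n$ together with $y_n \to y$ and $M_n$ converging weakly to $\mu$ implies $y \in \spt \mu$---readily violated by sequences of probability measures containing vanishing bumps. What rescues the argument here is precisely the convolution structure $\mu = M_n * \tau_n$ with $\tau_n$ converging weakly to $\delta_{\mathbf{0}}$: it lets me transfer positive $M_n$-mass near $x$ into positive $\mu$-mass near $x$ for a single conveniently chosen large $n$, without needing any uniform lower bound as $n \to \infty$.
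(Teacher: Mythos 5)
Your proof is correct. Note, however, that the paper does not prove this statement at all: it is quoted verbatim as \cite[Theorem 3]{JW35} (Jessen--Wintner, 1935) and used as a black box, so there is no internal proof to compare against. What you have supplied is a complete, self-contained argument, and the two key mechanisms you isolate are exactly the right ones: (i) $\spt M_n=\overline{\spt\mu_1+\cdots+\spt\mu_n}$ via the standard support-of-a-convolution identity, and (ii) the tail decomposition $\mu=M_n*\tau_n$ with $\tau_n\to\delta_{\mathbf 0}$, which is what makes the inclusion $\supseteq$ go through and which you correctly identify as the step where a naive weak-convergence argument would fail. Interestingly, the paper's own proof of its Theorem \ref{spt=} (which builds on Theorem \ref{old}) invokes the same probabilistic device you mention --- realizing the $\mu_k$ as laws of independent random vectors whose series converges almost surely --- so your route is very much in the spirit of how the paper handles the surrounding material. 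One small caveat: your alternative Fourier-analytic justification that $\tau_n\to\delta_{\mathbf 0}$ is slightly glossed, since L\'evy's continuity theorem as usually stated requires pointwise convergence of the characteristic functions on all of $\R^d$, whereas the quotient $\widehat\mu/\widehat{M_n}$ only gives you control near the origin; one must add a tightness argument plus the observation that a characteristic function equal to $1$ on a neighbourhood of the origin forces the measure to be $\delta_{\mathbf 0}$. The probabilistic route via L\'evy's equivalence theorem, which you offer first, is fully rigorous and avoids this issue, so the proof stands.
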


The following is the well-known Stolz-Ces\`aro Theorem.

\begin{theorem}\label{SC} Let $\beta_1,\beta_2,\beta_3,\cdots\in(0,\infty)$ such that $\sum_{n=1}^\infty\beta_n=\infty$ and let $\alpha_1,\alpha_2,\alpha_3,\cdots\in\R$. Then
$$\varliminf_{n\to\infty}\frac{\alpha_1+\alpha_2+\cdots+\alpha_n}{\beta_1+\beta_2+\cdots+\beta_n}\ge\varliminf_{n\to\infty}\frac{\alpha_n}{\beta_n}\quad\text{and}\quad\varlimsup_{n\to\infty}\frac{\alpha_1+\alpha_2+\cdots+\alpha_n}{\beta_1+\beta_2+\cdots+\beta_n}\le\varlimsup_{n\to\infty}\frac{\alpha_n}{\beta_n}.$$
In particular, if $\lim_{n\to\infty}\frac{\alpha_n}{\beta_n}$ exists, then
$$\lim_{n\to\infty}\frac{\alpha_1+\alpha_2+\cdots+\alpha_n}{\beta_1+\beta_2+\cdots+\beta_n}=\lim_{n\to\infty}\frac{\alpha_n}{\beta_n}.$$
\end{theorem}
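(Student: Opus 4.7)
The plan is to prove the two inequalities separately, then deduce the limit assertion from them, and then to prove just the $\varliminf$ inequality since the $\varlimsup$ inequality follows by applying the former to the sequence $\{-\alpha_n\}_{n\geqslant1}$ together with the identities $\varliminf_n(-x_n)=-\varlimsup_n x_n$.

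For the $\varliminf$ inequality, write $L:=\varliminf_{n\to\infty}\alpha_n/\beta_n$. The case $L=-\infty$ is trivial, so assume $L\in\R\cup\{+\infty\}$ and fix an arbitrary real number $\ell<L$. By definition of $\varliminf$, there exists $N\in\N$ such that $\alpha_n\geqslant \ell\beta_n$ for all $n\geqslant N$. Then for every $n>N$, I split the numerator at index $N$ and bound the tail from below:
\begin{equation*}
\frac{\alpha_1+\cdots+\alpha_n}{\beta_1+\cdots+\beta_n}
=\frac{\sum_{k=1}^{N-1}\alpha_k+\sum_{k=N}^{n}\alpha_k}{\beta_1+\cdots+\beta_n}
\geqslant\frac{\sum_{k=1}^{N-1}\alpha_k}{\beta_1+\cdots+\beta_n}+\ell\cdot\frac{\sum_{k=N}^{n}\beta_k}{\beta_1+\cdots+\beta_n}.
\end{equation*}
Since $\sum_{k=1}^\infty\beta_k=\infty$ and $N$ is fixed, the first summand on the right tends to $0$ and the ratio $\sum_{k=N}^{n}\beta_k/\sum_{k=1}^{n}\beta_k$ tends to $1$ as $n\to\infty$, so taking $\varliminf_{n\to\infty}$ yields $\varliminf_{n\to\infty}(\alpha_1+\cdots+\alpha_n)/(\beta_1+\cdots+\beta_n)\geqslant\ell$. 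Letting $\ell\nearrow L$ gives the desired inequality.

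For the "in particular" statement, if $\lim_{n\to\infty}\alpha_n/\beta_n$ exists (possibly $\pm\infty$), then $\varliminf_{n\to\infty}\alpha_n/\beta_n=\varlimsup_{n\to\infty}\alpha_n/\beta_n$, and the two inequalities sandwich the corresponding $\varliminf$ and $\varlimsup$ of the Ces\`aro-type ratio together, forcing the limit to exist and equal the same value.

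The argument is classical and essentially routine; the only point requiring a little care is handling $L=+\infty$, which is covered automatically by letting $\ell$ range over all of $\R$ below $L$, and making sure that the split-and-bound step uses $\ell\beta_n$ rather than $\alpha_n/\beta_n\cdot\beta_n$ so that the uniform constant $\ell$ can be pulled outside the sum.
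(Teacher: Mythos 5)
Your proof is correct. Note that the paper does not actually prove this statement: it is quoted as the well-known Stolz--Ces\`aro theorem and used as a black box, so there is no in-paper argument to compare against. Your argument is the standard one --- fix $\ell<\varliminf_{n\to\infty}\alpha_n/\beta_n$, split the numerator at the index $N$ beyond which $\alpha_n\geqslant\ell\beta_n$, use $\sum_n\beta_n=\infty$ to kill the head and drive the tail weight to $1$, then let $\ell$ increase to the liminf --- and the reduction of the $\varlimsup$ inequality to the $\varliminf$ one via $\alpha_n\mapsto-\alpha_n$, as well as the sandwich for the limit case, are both sound. The edge cases ($L=-\infty$ trivially, $L=+\infty$ via unbounded $\ell$) are handled correctly.
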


We present two useful facts in the following to end this section.

\begin{proposition}[Lagrange's trigonometric equality]\label{Lagrange} For all $\theta\in\R\setminus\{2k\pi:k\in\Z\}$ and $n\in\{0,1,2,\cdots\}$, we have
$$\sum_{k=0}^n\sin k\theta=\frac{\cos\frac{1}{2}\theta-\cos((n+\frac{1}{2})\theta)}{2\sin\frac{1}{2}\theta}\quad\text{and}\quad\sum_{k=0}^n\cos k\theta=\frac{\sin\frac{1}{2}\theta+\sin((n+\frac{1}{2})\theta)}{2\sin\frac{1}{2}\theta}.$$
\end{proposition}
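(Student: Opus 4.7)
The plan is to prove both formulas simultaneously by multiplying through by $2\sin(\theta/2)$ and recognizing the resulting left-hand sides as telescoping sums. Note that $2\sin(\theta/2)\neq 0$ under the hypothesis $\theta\in\R\setminus\{2k\pi:k\in\Z\}$, so this rearrangement and the final division are legitimate.

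Specifically, I would apply the product-to-sum identities
$$2\sin(\theta/2)\sin(k\theta)=\cos\big((k-\tfrac{1}{2})\theta\big)-\cos\big((k+\tfrac{1}{2})\theta\big),$$
$$2\sin(\theta/2)\cos(k\theta)=\sin\big((k+\tfrac{1}{2})\theta\big)-\sin\big((k-\tfrac{1}{2})\theta\big),$$
both of which follow directly from the standard formulas $\cos A-\cos B=-2\sin\frac{A+B}{2}\sin\frac{A-B}{2}$ and $\sin A-\sin B=2\cos\frac{A+B}{2}\sin\frac{A-B}{2}$. Summing each identity for $k=0,1,\ldots,n$ causes the right-hand sides to telescope: the first collapses to $\cos(-\theta/2)-\cos((n+\tfrac{1}{2})\theta)=\cos(\theta/2)-\cos((n+\tfrac{1}{2})\theta)$ (using that cosine is even), and the second collapses to $\sin((n+\tfrac{1}{2})\theta)-\sin(-\theta/2)=\sin((n+\tfrac{1}{2})\theta)+\sin(\theta/2)$ (using that sine is odd). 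Dividing each by $2\sin(\theta/2)$ yields the two claimed equalities.

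An equally clean alternative is to begin from the geometric series $\sum_{k=0}^{n}e^{ik\theta}=(e^{i(n+1)\theta}-1)/(e^{i\theta}-1)$, multiply numerator and denominator by $e^{-i\theta/2}$ to expose a factor of $2i\sin(\theta/2)$ in the denominator, and then separate real and imaginary parts. There is no genuine obstacle in either route; the only point requiring comment is that the assumption $\theta\notin 2\pi\Z$ is precisely what guarantees $\sin(\theta/2)\neq 0$ (equivalently $e^{i\theta}\neq 1$), so no step involves division by zero.
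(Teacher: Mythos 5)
Your argument is correct: the product-to-sum identities do telescope exactly as you claim, and the hypothesis $\theta\notin 2\pi\Z$ is precisely what makes the division by $2\sin(\theta/2)$ legitimate. The paper states this proposition without proof, treating it as a classical fact, so there is nothing to compare against; your telescoping derivation (and the geometric-series alternative you sketch) is the standard way to establish it and fills the gap cleanly.
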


\begin{proposition}\label{diff-quot} For all $n\in\N$, let $a_n>c_n>0$ and $b_n>d_n>0$ with $\lim_{n\to\infty}\frac{a_n}{b_n}=\lim_{n\to\infty}\frac{c_n}{d_n}=r\in[0,\infty)$. If $\varliminf_{n\to\infty}\frac{b_n}{d_n}>1$, then $\lim_{n\to\infty}\frac{a_n-c_n}{b_n-d_n}=r$.
\end{proposition}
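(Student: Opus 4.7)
The plan is to reduce everything to the known limits $\frac{a_n}{b_n}\to r$ and $\frac{c_n}{d_n}\to r$ by writing the difference quotient in a form that exposes these two quantities explicitly, with bounded coefficients supplied by the hypothesis $\varliminf_{n\to\infty}\frac{b_n}{d_n}>1$.

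Concretely, I would set $\rho_n:=\frac{d_n}{b_n}\in(0,1)$, so that the hypothesis $\varliminf_{n\to\infty}\frac{b_n}{d_n}>1$ translates into $\varlimsup_{n\to\infty}\rho_n<1$. Hence there exist $\eta\in(0,1)$ and $N\in\N$ with $\rho_n\leqslant 1-\eta$ for all $n\geqslant N$, which makes both $\frac{1}{1-\rho_n}$ and $\frac{\rho_n}{1-\rho_n}$ bounded above by $\frac{1}{\eta}$ for large $n$. This is the quantitative substitute for the strict inequality in the hypothesis.

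The next step is the algebraic identity
$$\frac{a_n-c_n}{b_n-d_n}-r=\frac{(a_n-rb_n)-(c_n-rd_n)}{b_n-d_n}=\left(\frac{a_n}{b_n}-r\right)\frac{1}{1-\rho_n}-\left(\frac{c_n}{d_n}-r\right)\frac{\rho_n}{1-\rho_n},$$
obtained by dividing numerator and denominator by $b_n$ and using $\frac{d_n}{b_n}=\rho_n$. Since $\frac{a_n}{b_n}-r\to 0$ and $\frac{c_n}{d_n}-r\to 0$ by assumption, and the two coefficients are uniformly bounded for $n\geqslant N$, the right-hand side tends to $0$. Therefore $\frac{a_n-c_n}{b_n-d_n}\to r$, as required.

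The only place where some care is needed is ensuring $b_n-d_n>0$ so that the division is legitimate; this is immediate from the standing hypothesis $b_n>d_n$. The condition $r\in[0,\infty)$ plays no special role beyond guaranteeing that the limit $r$ is finite so the identity above is well-defined; no sign considerations are needed. There is no real obstacle here—the argument is a short one-line manipulation plus the boundedness observation from $\varlimsup_{n\to\infty}\rho_n<1$, and the proposition follows directly.
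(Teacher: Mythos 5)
Your proof is correct. It takes a genuinely different (and cleaner) route than the paper's. You set $\rho_n=\frac{d_n}{b_n}$ and use the exact identity
$$\frac{a_n-c_n}{b_n-d_n}-r=\Big(\frac{a_n}{b_n}-r\Big)\frac{1}{1-\rho_n}-\Big(\frac{c_n}{d_n}-r\Big)\frac{\rho_n}{1-\rho_n},$$
noting that the hypothesis $\varliminf_{n\to\infty}\frac{b_n}{d_n}>1$ gives $\rho_n\leqslant 1-\eta$ for large $n$, so both coefficients are uniformly bounded and the right-hand side is a bounded combination of two null sequences. The paper instead runs a direct $\epsilon$-argument: from the liminf hypothesis it fixes an integer $k$ with $(k+1)d_n<(k-1)b_n$ for all large $n$, approximates $\frac{a_n}{b_n}$ and $\frac{c_n}{d_n}$ to within $\frac{\epsilon}{k}$, and sandwiches $a_n-c_n$ between $(r-\epsilon)(b_n-d_n)$ and $(r+\epsilon)(b_n-d_n)$. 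Both arguments exploit the hypothesis for exactly the same purpose --- it bounds $1-\frac{d_n}{b_n}$ away from $0$, preventing the denominator $b_n-d_n$ from being small relative to $b_n$ --- but your exact algebraic decomposition makes this mechanism transparent and dispenses with the auxiliary integer $k$ and the inequality bookkeeping. Your observations that $b_n-d_n>0$ legitimizes the division and that finiteness of $r$ is all that is needed are also accurate; there is no gap.
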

\begin{proof} By $\varliminf_{n\to\infty}\frac{b_n}{d_n}>1$, there exist $N_0,k\in\N$ such that for all $n>N_0$ we have $\frac{b_n}{d_n}>1+\frac{2}{k-1}$, which is equivalent to
\begin{equation}\label{d<b}
(k+1)d_n<(k-1)b_n.
\end{equation}
Arbitrarily take $\epsilon>0$. By $\lim_{n\to\infty}\frac{a_n}{b_n}=\lim_{n\to\infty}\frac{c_n}{d_n}=r$, there exists $N>N_0$ such that for all $n>N$ we have
$$\left\{\begin{array}{l}
r-\frac{\epsilon}{k}<\frac{a_n}{b_n}<r+\frac{\epsilon}{k},\\
r-\frac{\epsilon}{k}<\frac{c_n}{d_n}<r+\frac{\epsilon}{k},
\end{array}\right.\quad\text{i.e.,}\quad
\left\{\begin{array}{rrrrr}
(r-\frac{\epsilon}{k})b_n&<&a_n&<&(r+\frac{\epsilon}{k})b_n,\\
-(r+\frac{\epsilon}{k})d_n&<&-c_n&<&-(r-\frac{\epsilon}{k})d_n,
\end{array}\right.$$
which imply
$$(r-\epsilon)(b_n-d_n)\overset{\text{by (\ref{d<b})}}{<}(r-\frac{\epsilon}{k})b_n-(r+\frac{\epsilon}{k})d_n<a_n-c_n<(r+\frac{\epsilon}{k})b_n-(r-\frac{\epsilon}{k})d_n\overset{\text{by (\ref{d<b})}}{<}(r+\epsilon)(b_n-d_n).$$
We get $r-\epsilon<\frac{a_n-c_n}{b_n-d_n}<r+\epsilon$ for all $n>N$. Therefore $\lim_{n\to\infty}\frac{a_n-c_n}{b_n-d_n}=r$.
\end{proof}

In Proposition \ref{diff-quot}, in order to get the conclusion $\lim_{n\to\infty}\frac{a_n-c_n}{b_n-d_n}=r$, the condition $\varliminf_{n\to\infty}\frac{b_n}{d_n}>1$ can not be omitted. Otherwise, we can take $a_n=10^n+n$, $b_n=10^n+1$ and $c_n=d_n=10^n$ for all $n\in\N$. Then $a_n>c_n>0$ and $b_n>d_n>0$ for all $n\in\N$ with $\lim_{n\to\infty}\frac{a_n}{b_n}=\lim_{n\to\infty}\frac{c_n}{d_n}=1$. But $\lim_{n\to\infty}\frac{a_n-c_n}{b_n-d_n}=\infty$.

\section{Proof of Theorem \ref{spectrality}}
\indent

First we give the following lemma. In the proof we will see the intricacy of the high-dimensional case, especially in the estimation of the lower bound of $|\widehat{\nu}_{>n}(\xi)|$ for $\xi\in[-2/3,2/3]^d$.

\begin{lemma}\label{equi-lemma}
Let $d\in\N$, $\{m_k\}_{k\ge1}$ be a sequence of positive integers no less than $2$, $\{R_k\}_{k\ge1}$ be a sequence of $d\times d$ invertible real matrices, $\{B_k\}_{k\ge1}$ be a sequence of nearly $d$-th power lattices with respect to $\{m_k\}_{k\ge1}$ and $\{R_k\}_{k\ge1}$, and $\{\nu_{>n}\}_{n\ge1}$ be given by (\ref{nu >n}). If $[-m_k,m_k]^d\subseteq R_k^T[-1,1]^d$ for every $k\in\N$, then there exists $n_0\in\N$ such that $\{\nu_{>n}\}_{n\ge n_0}$ is equi-positive.
\end{lemma}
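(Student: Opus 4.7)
The plan is to reduce equi-positivity of $\{\nu_{>n}\}_{n\geqslant n_0}$ to a uniform lower bound of the form $|\widehat{\nu_{>n}}(\eta)|\geqslant\epsilon$ on $[-2/3,2/3]^d$. Given $x\in[0,1)^d$, I would take $k_{x,\nu_{>n}}\in\{-1,0\}^d$ componentwise, with the $j$-th coordinate equal to $0$ when $x_j\leqslant 1/2$ and to $-1$ otherwise, so that $x+k_{x,\nu_{>n}}\in[-1/2,1/2]^d$; taking $\delta<1/6$ then forces $x+y+k_{x,\nu_{>n}}\in[-2/3,2/3]^d$ for every $|y|<\delta$, and the convention $k_{\mathbf{0},\nu_{>n}}=\mathbf{0}$ is automatic.

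Writing $M_B(\xi):=\frac{1}{\#B}\sum_{b\in B}e^{-2\pi i<b,\xi>}$, we have the factorisation
$$\widehat{\nu_{>n}}(\eta)=\prod_{k=n+1}^{\infty}M_{B_k}(\xi_k),\qquad \xi_k:=(R_k^T)^{-1}\cdots(R_{n+1}^T)^{-1}\eta.$$
Since all entries of $R_k$ are multiples of $m_k$, I can write $R_k=m_kT_k$ for an integer matrix $T_k$, so that $(R_k^T)^{-1}=\frac{1}{m_k}(T_k^T)^{-1}$. The hypothesis $[-m_k,m_k]^d\subseteq R_k^T[-1,1]^d$ is then equivalent to $(T_k^T)^{-1}[-1,1]^d\subseteq[-1,1]^d$, and a short induction gives $\xi_k\in\frac{1}{m_{n+1}\cdots m_k}[-2/3,2/3]^d$, so that $|m_{n+1}\xi_{n+1,j}|\leqslant 2/3$ while $|m_k\xi_{k,j}|\leqslant 2/(3\cdot 2^{k-n-1})$ for $k\geqslant n+2$.

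The product is then estimated by comparing $M_{B_k}$ with its lattice counterpart $M_{L_k}$, where $L_k=\{0,1,\ldots,m_k-1\}^d$. Because $B_k\equiv L_k\pmod{R_k\Z^d}$ with a natural bijection, the nearly-lattice condition (\ref{nearly}) yields the summable error
$$|M_{B_k}(\xi)-M_{L_k}(\xi)|\leqslant\frac{2\#(B_k\setminus L_k)}{m_k^d}.$$
For $M_{L_k}$ itself, the coordinatewise product formula (via Proposition~\ref{Lagrange}) reduces matters to single factors $|\sin(\pi m_k\xi_{k,j})/(m_k\sin(\pi\xi_{k,j}))|$, which I would handle in two regimes. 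For $k=n+1$, monotonicity of $|\sin u|/|u|$ on $[0,\pi]$ combined with $|\pi m_{n+1}\xi_{n+1,j}|\leqslant 2\pi/3$ yields the dimension-dependent bound $|M_{L_{n+1}}(\xi_{n+1})|\geqslant(3\sqrt{3}/(4\pi))^d$. For $k\geqslant n+2$, the geometric smallness of $m_k\xi_{k,j}$ makes Taylor expansion effective, giving $1-|M_{L_k}(\xi_k)|\leqslant C_d(m_k||\xi_k||)^2$ with $\sum_{k\geqslant n+2}(m_k||\xi_k||)^2$ bounded uniformly in $n$ via a geometric series; hence $\prod_{k\geqslant n+2}|M_{L_k}(\xi_k)|\geqslant c_d>0$ uniformly in $n$. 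The $M_{B_k}-M_{L_k}$ perturbation is absorbed through $\log(1-x)\geqslant-2x$ for $x\leqslant 1/2$, which applies once $n_0$ is chosen so large that $2\#(B_k\setminus L_k)/m_k^d\leqslant\frac{1}{2}(3\sqrt{3}/(4\pi))^d$ for all $k\geqslant n_0+1$.

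The main obstacle is the $k=n+1$ factor: the rescaled argument $m_{n+1}\xi_{n+1}$ only lies in $[-2/3,2/3]^d$ rather than in a shrinking cube, so no Taylor estimate applies and one is left with only a dimension-dependent constant strictly less than one. This constant in turn controls how small $\#(B_k\setminus L_k)/m_k^d$ must be in order for the $M_{B_k}-M_{L_k}$ perturbation to be absorbed, thereby fixing the threshold $n_0$. The combined requirements---the matrix contraction guaranteed by $[-m_k,m_k]^d\subseteq R_k^T[-1,1]^d$, the simultaneous comparison with the reference lattice $L_k$ in all $d$ coordinates, and the separate treatment of the initial factor---together explain why the $d$-dimensional argument is substantially more intricate than its one-dimensional counterpart in \cite[Theorem 1.4]{LMW22}.
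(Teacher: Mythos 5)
Your proposal is correct and follows essentially the same route as the paper's proof: factor $\widehat{\nu}_{>n}$ into the mask functions $M_{B_k}$, use $[-m_k,m_k]^d\subseteq R_k^T[-1,1]^d$ (equivalently $(R_k^T)^{-1}[-1,1]^d\subseteq[-\frac{1}{m_k},\frac{1}{m_k}]^d$) to push the arguments into geometrically shrinking cubes, compare each $M_{B_k}$ with the full lattice mask up to the error $2\#(B_k\setminus\{0,\dots,m_k-1\}^d)/m_k^d$ made summable by (\ref{nearly}), and pick $k_{x,\mu}\in\{-1,0\}^d$ componentwise with $\delta=1/6$. Two minor remarks: the factorization $R_k=m_kT_k$ with $T_k$ an integer matrix invokes a hypothesis (entries of $R_k$ divisible by $m_k$) that this lemma does not assume --- though the containment you actually need is exactly the stated hypothesis, so nothing breaks --- and your ``main obstacle'' at $k=n+1$ is illusory, since the quadratic bound $|\sin x/x|\geqslant 1-x^2/6$ used for the later factors still yields the positive constant $1-\frac{2\pi^2}{27}$ at $|x|\leqslant\frac{2\pi}{3}$, which is how the paper treats all factors uniformly without singling out the first one.
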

\begin{proof}Let $c_k:=\#(B_k\setminus\{0,1,\cdots,m_k-1\}^d)$ for all $k\in\N$.
\vspace{5pt}
\newline(1) Prove that for all $k\in\N$ and $\xi=(\xi_1,\xi_2,\cdots,\xi_d)\in[-\frac{\sqrt{6}}{m_k\pi},\frac{\sqrt{6}}{m_k\pi}]^d$ we have
$$|\widehat{\delta}_{B_k}(\xi)|\ge\prod_{j=1}^d\Big(1-\frac{m_k^2\pi^2\xi_j^2}{6}\Big)-\frac{2c_k}{m_k^d}.$$
Let $k\in\N$ and $\xi\in[-\frac{\sqrt{6}}{m_k\pi},\frac{\sqrt{6}}{m_k\pi}]^d$. Then
$$\begin{aligned}
|\widehat{\delta}_{B_k}(\xi)|&=\Big|\int_{\R^d}e^{-2\pi i<\xi,x>}d\delta_{B_k}(x)\Big|=\Big|\frac{1}{\#B_k}\sum_{b\in B_k}e^{-2\pi i<b,\xi>}\Big|\\
&\ge\frac{1}{m_k^d}\Big|\sum_{b\in\{0,1,\cdots,m_k-1\}^d}e^{-2\pi i<b,\xi>}\Big|-\frac{1}{m_k^d}\Big|\sum_{b\in\{0,1,\cdots,m_k-1\}^d}e^{-2\pi i<b,\xi>}-\sum_{b\in B_k}e^{-2\pi i<b,\xi>}\Big|\\
&\ge\frac{1}{m_k^d}\Big|\sum_{b_1,b_2,\cdots,b_d\in\{0,1,\cdots,m_k-1\}}e^{-2\pi i(b_1\xi_1+b_2\xi_2+\cdots b_d\xi_d)}\Big|-\frac{2}{m_k^d}\cdot\#\big(B_k\setminus\{0,1,\cdots,m_k-1\}^d\big)\\
&=\Big|\frac{1}{m_k}\sum_{b_1=0}^{m_k-1}e^{-2\pi ib_1\xi_1}\Big|\cdot\Big|\frac{1}{m_k}\sum_{b_2=0}^{m_k-1}e^{-2\pi ib_2\xi_2}\Big|\cdots\Big|\frac{1}{m_k}\sum_{b_d=0}^{m_k-1}e^{-2\pi ib_d\xi_d}\Big|-\frac{2c_k}{m_k^d}\\
&\ge\Big(1-\frac{m_k^2\pi^2\xi_1^2}{6}\Big)\Big(1-\frac{m_k^2\pi^2\xi_2^2}{6}\Big)\cdots\Big(1-\frac{m_k^2\pi^2\xi_d^2}{6}\Big)-\frac{2c_k}{m_k^d}
\end{aligned}$$
where the last inequality follows from the fact that for all $j\in\{1,2,\cdots,d\}$ we can prove
$$\Big|\frac{1}{m_k}\sum_{b=0}^{m_k-1}e^{-2\pi ib\xi_j}\Big|\ge1-\frac{m_k^2\pi^2\xi_j^2}{6}\ge0.$$
Since the second inequality follows immediately from $\xi\in[-\frac{\sqrt{6}}{m_k\pi},\frac{\sqrt{6}}{m_k\pi}]^d$, it suffices to prove the first one. If $\xi_j\in\Z$, the first inequality obviously holds. If $\xi_j\notin\Z$, by Lagrange's trigonometric equality in Proposition \ref{Lagrange} we get
$$\frac{1}{m_k}\Big|\sum_{b=0}^{m_k-1}e^{-2\pi ib\xi_j}\Big|=\frac{1}{m_k}\Big|\frac{\sin(m_k\pi\xi_j)}{\sin(\pi\xi_j)}\Big|\overset{(\star)}{\ge}\Big|\frac{\sin(m_k\pi\xi_j)}{m_k\pi\xi_j}\Big|\overset{(\star\star)}{\ge}1-\frac{m_k^2\pi^2\xi_j^2}{6},$$
where ($\star$) and ($\star\star$) follow respectively from $|\sin x|\le|x|$ and $|\frac{\sin x}{x}|\ge1-\frac{x^2}{6}$ for all $x\in\R\setminus\{0\}$.
\vspace{5pt}
\newline(2) Prove that for all $n,k\in\N$ we have
$$(\mathbf{R}_{n,n+k}^T)^{-1}\big[-\frac{2}{3},\frac{2}{3}\big]^d\subseteq\big[-\frac{2}{3m_{n+1}\cdots m_{n+k}},\frac{2}{3m_{n+1}\cdots m_{n+k}}\big]^d$$
where $\mathbf{R}_{n,n+k}:=R_{n+k}R_{n+k-1}\cdots R_{n+1}$. By the linearity of $(\mathbf{R}_{n,n+k}^T)^{-1}$, it suffices to prove
\begin{equation}\label{nk subset}
(R_{n+k}^T)^{-1}\cdots(R_{n+1}^T)^{-1}[-1,1]^d\subseteq\big[-\frac{1}{m_{n+1}\cdots m_{n+k}},\frac{1}{m_{n+1}\cdots m_{n+k}}\big]^d.
\end{equation}
Note that for all $k\in\N$ we have the condition $[-m_k,m_k]^d\subseteq R_k^T[-1,1]^d$, which is equivalent to
\begin{equation}\label{k subset}
(R_k^T)^{-1}[-1,1]^d\subseteq\big[-\frac{1}{m_k},\frac{1}{m_k}\big]^d.
\end{equation}
Thus
$$(R_{n+1}^T)^{-1}[-1,1]^d\subseteq\big[-\frac{1}{m_{n+1}},\frac{1}{m_{n+1}}\big]^d,$$
and then
$$(R_{n+2}^T)^{-1}(R_{n+1}^T)^{-1}[-1,1]^d\subseteq(R_{n+2}^T)^{-1}\big[-\frac{1}{m_{n+1}},\frac{1}{m_{n+1}}\big]^d\subseteq\big[-\frac{1}{m_{n+1}m_{n+2}},\frac{1}{m_{n+1}m_{n+2}}\big]^d,$$
where the last inclusion follows from (\ref{k subset}) and the linearity of $(R_{n+2}^T)^{-1}$. Repeating this process for finitely many times, we get (\ref{nk subset}).
\vspace{5pt}
\newline(3) Prove that there exists $n_0\in\N$ and $\epsilon>0$ such that for all $n\ge n_0$ and $\xi\in[-\frac{2}{3},\frac{2}{3}]^d$ we have $|\widehat{\nu}_{>n}(\xi)|\ge\epsilon$.

In fact, by $\sum_{k=1}^\infty\frac{c_k}{m_k^d}<\infty$ there exists $n_0\in\N$ such that for all $k\ge n_0$ we have
$$\frac{2c_k}{m_k^d}<\frac{2\pi^2}{27}\Big(1-\frac{2\pi^2}{27}\Big)^d.$$
Let $n\ge n_0$ and $\xi\in[-\frac{2}{3},\frac{2}{3}]^d$. By
$$\nu_{>n}=\delta_{\mathbf{R}_{n,n+1}^{-1}B_{n+1}}*\delta_{\mathbf{R}_{n,n+2}^{-1}B_{n+2}}*\delta_{\mathbf{R}_{n,n+3}^{-1}B_{n+3}}*\cdots$$
we get
\begin{equation}\label{prod}
|\widehat{\nu}_{>n}(\xi)|=\Big|\prod_{k=1}^\infty\widehat{\delta}_{\mathbf{R}_{n,n+k}^{-1}B_{n+k}}(\xi)\Big|=\Big|\prod_{k=1}^\infty\widehat{\delta}_{B_{n+k}}((\mathbf{R}_{n,n+k}^{-1})^T\xi)\Big|=\prod_{k=1}^\infty\Big|\widehat{\delta}_{B_{n+k}}((\mathbf{R}_{n,n+k}^T)^{-1}\xi)\Big|.
\end{equation}
For all $k\in\N$, it follows from $\xi\in[-\frac{2}{3},\frac{2}{3}]^d$, $m_{n+1},\cdots,m_{n+k-1}\ge2$ and (2) that
$$(\mathbf{R}_{n,n+k}^T)^{-1}\xi\in\Big[-\frac{2}{3\cdot2^{k-1}m_{n+k}},\frac{2}{3\cdot2^{k-1}m_{n+k}}\Big]^d\subseteq\Big[-\frac{2}{3m_{n+k}},\frac{2}{3m_{n+k}}\Big]^d\subseteq\Big[-\frac{\sqrt{6}}{m_{n+k}\pi},\frac{\sqrt{6}}{m_{n+k}\pi}\Big]^d.$$
Use $\big((\mathbf{R}_{n,n+k}^T)^{-1}\xi\big)_j$ to denote the $j$th coordinate of $(\mathbf{R}_{n,n+k}^T)^{-1}\xi$ for $j\in\{1,\cdots,d\}$. For all $k\in\N$, by (1) we get
\begin{equation}\label{ge}
\begin{aligned}
\Big|\widehat{\delta}_{B_{n+k}}\big((\mathbf{R}_{n,n+k}^T)^{-1}\xi\big)\Big|&\ge\prod_{j=1}^d\Big(1-\frac{m_{n+k}^2\pi^2\big((\mathbf{R}_{n,n+k}^T)^{-1}\xi\big)_j^2}{6}\Big)-\frac{2c_{n+k}}{m_{n+k}^d}\\
&\ge\prod_{j=1}^d\Big(1-\frac{m_{n+k}^2\pi^2}{6}\cdot\big(\frac{2}{3\cdot2^{k-1}m_{n+k}}\big)^2\Big)-\frac{2c_{n+k}}{m_{n+k}^d}\\
&=\Big(1-\frac{2\pi^2}{27\cdot4^{k-1}}\Big)^d-\frac{2c_{n+k}}{m_{n+k}^d}\\
&>\Big(1-\frac{2\pi^2}{27}\Big)^d-\frac{2\pi^2}{27}\Big(1-\frac{2\pi^2}{27}\Big)^d\\
&=\Big(1-\frac{2\pi^2}{27}\Big)^{d+1}\\
&>0.
\end{aligned}
\end{equation}
Let
$$\alpha:=\frac{(d+1)\ln(1-\frac{2\pi^2}{27})}{(1-\frac{2\pi^2}{27})^{d+1}-1}>0.$$
Then one can verify
\begin{equation}\label{x ge}
x\ge e^{\alpha(x-1)}>0\quad\text{for all }x\in\Big[\big(1-\frac{2\pi^2}{27}\big)^{d+1},1\Big].
\end{equation}
It follows from (\ref{prod}), (\ref{ge}) and (\ref{x ge}) that
$$\begin{aligned}
|\widehat{\nu}_{>n}(\xi)|&\ge\prod_{k=1}^\infty\Big(\big(1-\frac{2\pi^2}{27\cdot4^{k-1}}\big)^d-\frac{2c_{n+k}}{m_{n+k}^d}\Big)\\
&\ge\prod_{k=1}^\infty\exp\Big(\alpha\big((1-\frac{2\pi^2}{27\cdot4^{k-1}})^d-1-\frac{2c_{n+k}}{m_{n+k}^d}\big)\Big)\\
&\ge\prod_{k=1}^\infty\exp\Big(\alpha\big(1-(1+\frac{2\pi^2}{27\cdot4^{k-1}})^d-\frac{2c_{n+k}}{m_{n+k}^d}\big)\Big)\\
&=\prod_{k=1}^\infty\exp\Big(-\alpha\big(\sum_{j=1}^d\binom{d}{j}(\frac{2\pi^2}{27\cdot4^{k-1}})^j+\frac{2c_{n+k}}{m_{n+k}^d}\big)\Big)\\
&=\exp\Big(-\alpha\sum_{k=1}^\infty\big(\sum_{j=1}^d\binom{d}{j}(\frac{2\pi^2}{27})^j\cdot\frac{1}{4^{j(k-1)}}+\frac{2c_{n+k}}{m_{n+k}^d}\big)\Big)\\
&=\exp\Big(-\alpha\big(\sum_{j=1}^d\binom{d}{j}(\frac{2\pi^2}{27})^j\sum_{k=1}^\infty(\frac{1}{4^j})^{k-1}+2\sum_{k=1}^\infty\frac{c_{n+k}}{m_{n+k}^d}\big)\Big)\\
&\ge\exp\Big(-\alpha\sum_{j=1}^d\binom{d}{j}\frac{8^j\pi^{2j}}{27^j(4^j-1)}-2\alpha\sum_{k=1}^\infty\frac{c_k}{m_k^d}\Big)\xlongequal[]{\text{denoted by}}:\epsilon>0
\end{aligned}$$
for all $n\ge n_0$ and $\xi\in\big[-\frac{2}{3},\frac{2}{3}\big]^d$, where $\binom{d}{j}:=\frac{d!}{(d-j)!\cdot j!}$.
\newpage\noindent(4) Prove that $\{\nu_{>n}\}_{n\ge n_0}$ is equi-positive.
\newline Let $\delta=\frac{1}{6}$. For each $x=(x_1,\cdots,x_d)\in[0,1)^d$, define $k=k(x)=(k_1.\cdots,k_d)\in\Z^d$ by
$$k_j:=\left\{\begin{array}{ll}
0 & \mbox{if } x_j\in[0,\frac{1}{2})\\
-1 & \mbox{if } x_j\in[\frac{1}{2},1)
\end{array}\right.\quad\text{for all }j\in\{1,\cdots,d\}.$$
Then for all $x\in[0,1)^d$, $n\ge n_0$ and $y\in\R^d$ with $|y|<\delta$, we have $x+k(x)+y\in[-\frac{2}{3},\frac{2}{3}]^d$, and by the above (3) we get $|\widehat{\nu}_{>n}(x+k(x)+y)|\ge\epsilon$. Therefore $\{\nu_{>n}\}_{n\ge n_0}$ is equi-positive.
\end{proof}

Before deducing Theorem \ref{spectrality} from Lemma \ref{equi-lemma} and Theorem \ref{cite}, we need the following Propositions \ref{implies expanding}, \ref{mod HT} and \ref{R to RT} to deal with our high-dimensional case.

\begin{proposition}\label{implies expanding}
Let $d\in\N$, $R$ be a $d\times d$ real matrix and $C>0$. If $[-C,C]^d\subseteq R[-1,1]^d$, then all eigenvalues of $R$ have modulus no less than $C$.
\end{proposition}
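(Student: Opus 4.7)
The plan is to translate the set-theoretic hypothesis into an operator norm bound on $R^{-1}$, and then invoke the general inequality between spectral radius and any induced matrix norm.

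First I would observe that since $[-C,C]^d \subseteq R[-1,1]^d$ with $C>0$, the image $R\R^d$ contains a neighborhood of the origin, so $R$ has full rank and is invertible. The inclusion then rewrites as $R^{-1}[-C,C]^d \subseteq [-1,1]^d$, which is to say: for every $y \in \R^d$ with $\|y\|_\infty \leqslant C$ one has $\|R^{-1}y\|_\infty \leqslant 1$. By homogeneity this is exactly the bound
\[
\|R^{-1}\|_\infty := \sup_{y \neq 0} \frac{\|R^{-1}y\|_\infty}{\|y\|_\infty} \leqslant \frac{1}{C}
\]
on the operator norm of $R^{-1}$ induced by the $\ell^\infty$-norm on $\R^d$.

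Next I would pass from this norm bound to a bound on eigenvalues. If $\lambda \in \C$ is any (possibly complex) eigenvalue of $R$, then $\lambda \neq 0$ (as $R$ is invertible) and $\lambda^{-1}$ is an eigenvalue of $R^{-1}$. Using the spectral radius formula together with the submultiplicativity of the induced norm, one has
\[
|\lambda^{-1}| \;\leqslant\; \rho(R^{-1}) \;=\; \lim_{n\to\infty} \|R^{-n}\|_\infty^{1/n} \;\leqslant\; \|R^{-1}\|_\infty \;\leqslant\; \frac{1}{C},
\]
which rearranges to $|\lambda| \geqslant C$, as required. The only small care needed is that for complex eigenvalues of a real matrix, one should either extend the $\ell^\infty$ operator norm to $\C^d$ in the standard way (the same inequalities persist) or simply invoke the general spectral radius inequality $\rho(A) \leqslant \|A\|$ valid for any submultiplicative matrix norm on $M_d(\R)$, applied to $A = R^{-1}$.

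I do not foresee a real obstacle here; the statement is essentially a repackaging of $\rho(R^{-1}) \leqslant \|R^{-1}\|_\infty$. The only decision is how pedantically to justify the norm inequality for complex eigenvalues, and I would take the most economical route, namely the spectral radius formula $\rho(R^{-1}) = \lim_n \|R^{-n}\|_\infty^{1/n}$, since it avoids any discussion of complexification.
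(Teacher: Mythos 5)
Your proposal is correct. The reduction of the hypothesis to the operator-norm bound $\|R^{-1}\|_\infty\leqslant 1/C$ is exactly right (including the preliminary observation that the inclusion forces invertibility), and the conclusion $|\lambda|\geqslant C$ then follows from $\rho(R^{-1})\leqslant\|R^{-1}\|_\infty$; your remark that one should either complexify or use Gelfand's formula to cover non-real eigenvalues is the one genuine subtlety, and you handle it correctly.

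The paper takes a more elementary, self-contained route to the same conclusion: rather than quoting the spectral radius inequality, it picks an eigenvector $z$ of $R$ normalized to lie in $\big([-1,1]+i[-1,1]\big)^d\setminus\{\mathbf{0}\}$, upgrades the real inclusion $CR^{-1}[-1,1]^d\subseteq[-1,1]^d$ to the complexified box, and iterates to get $(CR^{-1})^nz\in\big([-1,1]+i[-1,1]\big)^d$ for all $n$, whence $C^n|z|\leqslant|\lambda|^n\sqrt{2d}$ and $|\lambda|\geqslant C$ after letting $n\to\infty$. This is in effect a hands-on proof of the special case of $\rho(R^{-1})\leqslant\|R^{-1}\|_\infty$ that is needed, with the complexification issue resolved by working in the complex box from the start. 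Your version is shorter and cleaner if one is willing to cite the spectral radius formula as a black box; the paper's version buys complete self-containment at the cost of the explicit iteration. Both are valid proofs of the statement.
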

\begin{proof} We use an iterative technique. Let $\lambda\in\C$ be an eigenvalue of $R$. Then there exists
\begin{equation}\label{z in}
z\in\big([-1,1]+i[-1,1]\big)^d\setminus\{\mathbf{0}\}
\end{equation}
such that $Rz=\lambda z$. We need to prove $|\lambda|\ge C$. Since $[-C,C]^d\subseteq R[-1,1]^d$ implies that $R$ is invertible, we get $CR^{-1}[-1,1]^d\subseteq[-1,1]^d$ and then
$$CR^{-1}\big([-1,1]+i[-1,1]\big)^d\subseteq\big([-1,1]+i[-1,1]\big)^d.$$
It follows from (\ref{z in}) that
$$(CR^{-1})^nz\in\big([-1,1]+i[-1,1]\big)^d$$
for all $n\in\N$. Since $Rz=\lambda z$ implies $R^nz=\lambda^nz$, we get
$$C^nz=\lambda^n(CR^{-1})^nz\in\lambda^n\big([-1,1]+i[-1,1]\big)^d$$
and then
$$C^n|z|\le|\lambda|^n\sqrt{2d}\quad\text{for all }n\in\N.$$
By $|z|\neq0$, we must have $C\le|\lambda|$.
\end{proof}

\begin{proposition}\label{mod HT} Let $d\in\N$, $m\ge2$ be an integer, $R$ be a $d\times d$ expanding matrix with integer entries which are all multiples of $m$, and $B\subseteq\R^d$ such that $B\equiv\{0,1,\cdots,m-1\}^d$ (mod $R\Z^d$). Then $B\subseteq\Z^d$ and $(R,B)$ is an admissible pair in $\R^d$.
\end{proposition}
\begin{proof} By $B\equiv\{0,1,\cdots,m-1\}^d$ (mod $R\Z^d$), for every $u\in\{0,1,\cdots,m-1\}^d$, there exists $z_u\in\Z^d$ such that $B=\big\{u+Rz_u:u\in\{0,1,\cdots,m-1\}^d\big\}\subseteq\Z^d$. Let $L:=\frac{1}{m}R^T\{0,1,\cdots,m-1\}^d$. We need to prove that the matrix
$$\Big[\frac{1}{\sqrt{m^d}}e^{-2\pi i<R^{-1}b,l>}\Big]_{b\in B,l\in L}$$
is unitary, and equivalently,
$$\frac{1}{\sqrt{m^d}}\Big[e^{-2\pi i<R^{-1}(u+Rz_u),\frac{1}{m}R^Tv>}\Big]_{u,v\in\{0,1,\cdots,m-1\}^d}$$
is unitary. Note that
$$e^{-2\pi i<R^{-1}(u+Rz_u),\frac{1}{m}R^Tv>}=e^{-2\pi i<R^{-1}u,\frac{1}{m}R^Tv>}\cdot e^{-2\pi i<z_u,\frac{1}{m}R^Tv>}\overset{(\star)}{=}e^{-\frac{2\pi i}{m}<R^{-1}u,R^Tv>}=e^{-\frac{2\pi i}{m}<u,v>}$$
for all $u,v\in\{0,1,\cdots,m-1\}^d$, where $(\star)$ follows from $<z_u,\frac{1}{m}R^Tv>\in\Z$ since all entries of $R^T$
\newpage\noindent are multiples of $m$. We only need to prove that
$$\frac{1}{\sqrt{m^d}}\Big[e^{-\frac{2\pi i}{m}<u,v>}\Big]_{u,v\in\{0,1,\cdots,m-1\}^d}$$
is unitary. It suffices to prove
$$\sum_{v\in\{0,1,\cdots,m-1\}^d}e^{-\frac{2\pi i}{m}<u^{(1)}-u^{(2)},v>}=0$$
for all $u^{(1)},u^{(2)}\in\{0,1,\cdots,m-1\}^d$ with $u^{(1)}\neq u^{(2)}$, which is equivalent to
$$\sum_{v\in\{0,1,\cdots,m-1\}^d}e^{\frac{2\pi i}{m}<u,v>}=0$$
for all $u\in\{-(m-1),\cdots,-1,0,1,\cdots,m-1\}^d\setminus\{0^d\}$. Let $u=(u_1,\cdots,u_d)$ with $u_1,\cdots,u_d\in\{-(m-1),\cdots,-1,0,1,\cdots,m-1\}$ and $u_k\neq0$ for some $k\in\{1,\cdots,d\}$. Then
$$\begin{aligned}
\sum_{v\in\{0,1,\cdots,m-1\}^d}e^{\frac{2\pi i}{m}<u,v>}&=\sum_{v_1=0}^{m-1}\sum_{v_2=0}^{m-1}\cdots\sum_{v_d=0}^{m-1}e^{\frac{2\pi i}{m}(u_1v_1+u_2v_2+\cdots+u_dv_d)}\\
&=\Big(\sum_{s=0}^{m-1}e^{su_1\cdot\frac{2\pi i}{m}}\Big)\Big(\sum_{s=0}^{m-1}e^{su_2\cdot\frac{2\pi i}{m}}\Big)\cdots\Big(\sum_{s=0}^{m-1}e^{su_d\cdot\frac{2\pi i}{m}}\Big).
\end{aligned}$$
We only need to prove $\sum_{s=0}^{m-1}e^{su_k\cdot\frac{2\pi i}{m}}=0$. It suffices to prove $\sum_{s=0}^{m-1}e^{st\cdot\frac{2\pi i}{m}}=0$ for all $t\in\{1,2,\cdots,m-1\}$. Let $t\in\{1,2,\cdots,m-1\}$ and write $t=t'r$, $m=m'r$ with $r,t',m'\in\N$ such that $t'$ and $m'$ are coprime. Then
$$\begin{aligned}
\sum_{s=0}^{m-1}e^{st\cdot\frac{2\pi i}{m}}&=\sum_{s=0}^{m'r-1}e^{st'\cdot\frac{2\pi i}{m'}}=\sum_{n=0}^{r-1}\sum_{s=nm'}^{(n+1)m'-1}e^{st'\cdot\frac{2\pi i}{m'}}\\
&=\sum_{n=0}^{r-1}\sum_{s=0}^{m'-1}e^{(nm'+s)t'\cdot\frac{2\pi i}{m'}}=\sum_{n=0}^{r-1}\sum_{s=0}^{m'-1}e^{st'\cdot\frac{2\pi i}{m'}}=r\sum_{s=0}^{m'-1}e^{st'\cdot\frac{2\pi i}{m'}}.
\end{aligned}$$
We only need to prove $\sum_{s=0}^{m'-1}e^{st'\cdot\frac{2\pi i}{m'}}=0$. Since $t'$ and $m'$ are coprime, we get
$$\{0,t',2t',\cdots,(m'-1)t'\}\equiv\{0,1,2,\cdots,m'-1\}\quad\text{(mod $m'$)}.$$
Thus
$$\sum_{s=0}^{m'-1}e^{st'\cdot\frac{2\pi i}{m'}}=\sum_{s=0}^{m'-1}e^{s\cdot\frac{2\pi i}{m'}}\overset{(\star)}{=}0,$$
where in $(\star)$ we use $m'\ge2$ since $m>t$ implies $m'>t'\ge1$.
\end{proof}

\begin{proposition}\label{R to RT} Let $d\in\N$, $P$ be a $d\times d$ real matrix and $c\ge0$. If $P[-1,1]^d\subseteq[-c,c]^d$, then $P^T[-1,1]^d\subseteq[-cd,cd]^d$.
\end{proposition}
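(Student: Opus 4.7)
The plan is to reformulate the hypothesis in terms of entrywise bounds on $P$ and then read off the desired conclusion as a column-sum bound. The inclusion $P[-1,1]^d\subseteq[-c,c]^d$ is equivalent to the statement that the operator norm of $P$ acting on $(\R^d,\|\cdot\|_\infty)$ is at most $c$, which in turn is equivalent to the maximum absolute row sum of $P$ being at most $c$. The conclusion $P^T[-1,1]^d\subseteq[-cd,cd]^d$ is likewise equivalent to the maximum absolute row sum of $P^T$, i.e.\ the maximum absolute column sum of $P$, being at most $cd$. So the whole statement is really the elementary inequality ``column sums $\le d\cdot(\text{bound on individual entries})\le d\cdot(\text{bound on row sums})$''.

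Concretely, I would write $P=(P_{ij})_{1\le i,j\le d}$ and proceed in three short steps. First, for each fixed row index $i$, apply the hypothesis to the sign vector $x=(\operatorname{sgn}(P_{i1}),\dots,\operatorname{sgn}(P_{id}))\in[-1,1]^d$: the $i$-th coordinate of $Px$ equals $\sum_{j=1}^d|P_{ij}|$, which must lie in $[-c,c]$, giving
\[
\sum_{j=1}^d |P_{ij}|\le c \quad\text{for every }i\in\{1,\dots,d\}.
\]
Second, this immediately yields the entrywise estimate $|P_{ij}|\le c$ for all $i,j$. Third, for any $y=(y_1,\dots,y_d)\in[-1,1]^d$ and any coordinate $j$, the $j$-th entry of $P^Ty$ is $\sum_{i=1}^d P_{ij}y_i$, and
\[
\Bigl|\sum_{i=1}^d P_{ij}y_i\Bigr|\le \sum_{i=1}^d |P_{ij}|\,|y_i|\le \sum_{i=1}^d |P_{ij}|\le \sum_{i=1}^d c = cd,
\]
which is exactly the claim $P^Ty\in[-cd,cd]^d$.

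There is no real obstacle: the result is a one-line consequence of comparing the $\ell^\infty\to\ell^\infty$ operator norm with the $\ell^1\to\ell^1$ operator norm through the elementary bound $\|P\|_{1\to 1}=\max_j\sum_i|P_{ij}|\le d\max_{i,j}|P_{ij}|\le d\|P\|_{\infty\to\infty}$. The only small points to be careful about are (i) handling the case $P_{ij}=0$ in the sign-vector trick, which is harmless since $\operatorname{sgn}(0)=0$ still gives a vector in $[-1,1]^d$, and (ii) observing that the factor of $d$ is essentially sharp (e.g.\ the all-ones matrix scaled by $c/d$), so no better constant can be hoped for without extra structure. This confirms the dimensional loss exhibited by the $2\times 2$ example in the introduction where $[-2,2]^2\subseteq R_k[-1,1]^2$ fails to imply $[-2,2]^2\subseteq R_k^T[-1,1]^2$.
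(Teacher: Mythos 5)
Your proof is correct and takes essentially the same approach as the paper: both derive the entrywise bound $|P_{ij}|\leqslant c$ by testing $P$ on vertices (or sign vectors) of the cube $[-1,1]^d$, and then bound each coordinate of $P^Ty$ by the column sum $\sum_i|P_{ij}|\leqslant cd$. The only cosmetic difference is that you use a single sign vector per row to get $\sum_j|P_{ij}|\leqslant c$ directly, whereas the paper compares $P$ applied to two $\pm1$ vectors differing in one coordinate; the conclusion is reached identically in both cases.
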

\begin{proof} Suppose $P[-1,1]^d\subseteq[-c,c]^d$ and write
$$P=\left(\begin{matrix}
p_{11} & \cdots & p_{1d} \\
\vdots &        & \vdots \\
p_{d1} & \cdots & p_{dd}
\end{matrix}\right).$$
For all $s,t\in\{1,2,\cdots,d\}$, by $P(1,\cdots,1)^T\in[-c,c]^d$ we get
$$-c\le p_{s1}+\cdots+p_{s(t-1)}+p_{st}+p_{s(t+1)}+\cdots+p_{sd}\le c,$$
and by $P(1,\cdots,1,\overset{t}{-1},1,\cdots,1)^T\in[-c,c]^d$ we get
$$-c\le p_{s1}+\cdots+p_{s(t-1)}-p_{st}+p_{s(t+1)}+\cdots+p_{sd}\le c.$$
Therefore
$$-c\le p_{st}\le c\quad\text{for all }s,t\in\{1,2,\cdots,d\}.$$
It follows that for all $\tau_1,\cdots,\tau_d\in\{1,-1\}$ we have
$$P^T\left(\begin{matrix}
\tau_1 \\
\vdots \\
\tau_d
\end{matrix}\right)\in[-dc,dc]^d.$$
By the linearity of $P^T$ we get $P^T[-1,1]^d\subseteq[-cd,cd]^d$.
\end{proof}

Now we prove Theorem \ref{spectrality} to end this section.

\begin{proof}[Proof of Theorem \ref{spectrality}] (1) Prove that $\mu=\delta_{R_1^{-1}B_1}*\delta_{R_1^{-1}R_2^{-1}B_2}*\delta_{R_1^{-1}R_2^{-1}R_3^{-1}B_3}*\cdots$ exists.

By Corollary \ref{exist-cor}, we only need to prove
$$\sum_{k=1}^\infty\frac{1}{\#B_k}\sum_{a\in R_1^{-1}\cdots R_k^{-1}B_k}\frac{|a|}{1+|a|}<\infty,\quad\text{i.e.},\quad\sum_{k=1}^\infty\frac{1}{m_k^d}\sum_{x\in B_k}\frac{|R_1^{-1}\cdots R_k^{-1}x|}{1+|R_1^{-1}\cdots R_k^{-1}x|}<\infty.$$
Divide $B_k$ into two parts
$$B_{k,1}:=B_k\cap\{0,1,\cdots,m_k-1\}^d\quad\text{and}\quad B_{k,2}:=B_k\setminus\{0,1,\cdots,m_k-1\}^d.$$
Since
$$\sum_{k=1}^\infty\frac{1}{m_k^d}\sum_{x\in B_{k,2}}\frac{|R_1^{-1}\cdots R_k^{-1}x|}{1+|R_1^{-1}\cdots R_k^{-1}x|}\le\sum_{k=1}^\infty\frac{\#B_{k,2}}{m_k^d}\overset{\text{by (\ref{nearly})}}{<}\infty,$$
it suffices to show
$$\sum_{k=1}^\infty\frac{1}{m_k^d}\sum_{x\in B_{k,1}}\frac{|R_1^{-1}\cdots R_k^{-1}x|}{1+|R_1^{-1}\cdots R_k^{-1}x|}<\infty$$
in the following. In fact we have
$$\begin{aligned}
\sum_{k=1}^\infty\frac{1}{m_k^d}\sum_{x\in B_{k,1}}\frac{|R_1^{-1}\cdots R_k^{-1}x|}{1+|R_1^{-1}\cdots R_k^{-1}x|}&\le\sum_{k=1}^\infty\frac{1}{m_k^d}\sum_{x\in\{0,\cdots,m_k-1\}^d}|R_1^{-1}\cdots R_k^{-1}x|\\
&\overset{(\star)}{\le}\sum_{k=1}^\infty\frac{1}{m_k^d}\cdot m_k^d\cdot\frac{d\sqrt{d}}{m_1\cdots m_{k-1}}\le\sum_{k=1}^\infty\frac{d\sqrt{d}}{2^{k-1}}=2d\sqrt{d}<\infty,
\end{aligned}$$
where ($\star$) follows from
$$|R_1^{-1}\cdots R_k^{-1}x|\le\frac{d\sqrt{d}}{m_1\cdots m_{k-1}}\quad\text{for all }x\in\{0,\cdots,m_k-1\}^d\text{ and }k\in\N,$$
which can be proved as follows.

In fact, we only need to prove
$$R_1^{-1}\cdots R_k^{-1}[-m_k,m_k]^d\subseteq\Big[-\frac{d}{m_1\cdots m_{k-1}},\frac{d}{m_1\cdots m_{k-1}}\Big]^d\quad\text{for all }k\in\N.$$
By the linearity of $R_1^{-1}\cdots R_k^{-1}$ and Proposition \ref{R to RT}, it suffices to show
$$(R_k^{-1})^T\cdots(R_1^{-1})^T[-1,1]^d\subseteq\Big[-\frac{1}{m_1\cdots m_k},\frac{1}{m_1\cdots m_k}\Big]^d\quad\text{for all }k\in\N.$$
This follows immediately from the linearity of $(R_1^{-1})^T,\cdots,(R_k^{-1})^T$ and the fact that the condition $[-m_k,m_k]^d\subseteq R_k^T[-1,1]^d$ implies $(R_k^{-1})^T[-1,1]^d\subseteq[-\frac{1}{m_k},\frac{1}{m_k}]^d$ for every $k\in\N$.
\vspace{5pt}
\newline(2) Prove that $\mu$ is a spectral measure with a spectrum in $\Z^d$.
\begin{itemize}
\item[\textcircled{\footnotesize{$1$}}] Let $\{\nu_{>n}\}_{n\ge1}$ be given by (\ref{nu >n}). It follows from Lemma \ref{equi-lemma} that there exists $n_0\in\N$ such that $\{\nu_{>n}\}_{n\ge n_0}$ is equi-positive.
\item[\textcircled{\footnotesize{$2$}}] For every $k\in\N$, prove that $(R_k,B_k)$ is an admissible pair in $\R^d$.

In fact, by $[-m_k,m_k]^d\subseteq R_k^T[-1,1]^d$ and $m_k\ge2$, it follows from Proposition \ref{implies expanding} that all eigenvalues of $R_k^T$ have modulus no less than $2$. Noting that $R_k$ and $R_k^T$ have the same eigenvalues, $R_k$ must be expanding. Since $B_k\equiv\{0,1,\cdots,m_k-1\}^d$ (mod $R_k\Z^d$), by Proposition \ref{mod HT} we know that $(R_k,B_k)$ is an admissible pair in $\R^d$.
\item[\textcircled{\footnotesize{$3$}}] We have $\lim_{n\to\infty}|(R_n^T)^{-1}\cdots(R_1^T)^{-1}x|=0$ for all $x\in\R^d$, since $[-m_k,m_k]^d\subseteq R_k^T[-1,1]^d$ for all $k\in\N$ imply $(R_n^T)^{-1}\cdots(R_1^T)^{-1}[-1,1]^d\subseteq\big[-\frac{1}{m_1\cdots m_n},\frac{1}{m_1\cdots m_n}\big]^d\subseteq\big[-\frac{1}{2^n},\frac{1}{2^n}\big]^d$ for all $n\in\N$.
\end{itemize}
Combining the above \textcircled{\footnotesize{$1$}}, \textcircled{\footnotesize{$2$}}, \textcircled{\footnotesize{$3$}} and (1), by Theorem \ref{cite} we know that $\mu$ is a spectral measure with a spectrum in $\Z^d$.
\end{proof}

\section{Proofs of Theorem \ref{spt} and Corollary \ref{spt-1}}
\indent

Recall that for $A_1,A_2,\cdots\subseteq\R^d$,
$$\sum_{k=1}^\infty A_k:=\Big\{x\in\R^d:\exists a_k\in A_k\text{ for each }k\in\N\text{ s.t. }x=\sum_{k=1}^\infty a_k\Big\},$$
and for $B_1,B_2,\cdots\subseteq\R^d$,
$$\lim_{n\to\infty}B_n:=\Big\{x\in\R^d:\exists b_n\in B_n\text{ for each }n\in\N\text{ s.t. }x=\lim_{n\to\infty}b_n\Big\}.$$

Before proving Theorem \ref{spt}, we give the following two propositions first.

\begin{proposition}\label{+=}
Let $A_1,A_2,\cdots\subseteq\R^d$.
\begin{itemize}
\item[\emph{(1)}] We have
$$\lim_{n\to\infty}(A_1+\cdots+A_n)\supseteq\overline{\sum_{k=1}^\infty A_k}.$$
\item[\emph{(2)}] If $\sum_{k=1}^\infty A_k\neq\emptyset$, then
$$\lim_{n\to\infty}(A_1+\cdots+A_n)=\overline{\sum_{k=1}^\infty A_k}.$$
\end{itemize}
\end{proposition}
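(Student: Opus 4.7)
The plan is to establish part (1) in two steps---first the trivial inclusion $\sum_{k=1}^\infty A_k\subseteq\lim_{n\to\infty}(A_1+\cdots+A_n)$, then closedness of the right-hand set---after which part (2) follows by ``padding'' truncated partial sums with the tail of a fixed series representing an element of the (assumed non-empty) infinite sum.

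For part (1), first observe: if $x=\sum_{k=1}^\infty a_k$ with $a_k\in A_k$, the partial sums $s_n:=a_1+\cdots+a_n$ lie in $A_1+\cdots+A_n$ and satisfy $s_n\to x$, so $x\in\lim_{n\to\infty}(A_1+\cdots+A_n)$. Set $L:=\lim_{n\to\infty}(A_1+\cdots+A_n)$; it remains to prove $L$ is closed. If some $A_k$ is empty then $L=\emptyset$ (and $\sum_{k=1}^\infty A_k=\emptyset$), so assume every $A_k\neq\emptyset$. Given $x^{(m)}\in L$ with $x^{(m)}\to x$ and, for each $m$, an approximating sequence $b_n^{(m)}\in A_1+\cdots+A_n$ with $b_n^{(m)}\to x^{(m)}$ as $n\to\infty$, a diagonal construction produces the required single sequence: pick strictly increasing integers $N_1<N_2<\cdots$ with $|b_n^{(m)}-x^{(m)}|<1/m$ whenever $n\geqslant N_m$, then set $b_n:=b_n^{(m)}$ for $n\in[N_m,N_{m+1})$ (and any element of the non-empty $A_1+\cdots+A_n$ for $n<N_1$). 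The triangle inequality
$$|b_n-x|\leqslant|b_n^{(m)}-x^{(m)}|+|x^{(m)}-x|<\frac{1}{m}+|x^{(m)}-x|\quad\text{for }n\in[N_m,N_{m+1})$$
forces $b_n\to x$ as $n\to\infty$ (since $m\to\infty$ with $n$), hence $x\in L$. Combined with the first observation this yields $\overline{\sum_{k=1}^\infty A_k}\subseteq L$.

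For part (2), the inclusion $\supseteq$ is exactly part (1). For the reverse, fix any $y=\sum_{k=1}^\infty c_k\in\sum_{k=1}^\infty A_k$ (which exists by hypothesis) and let $x\in L$ with $b_n\to x$, where $b_n=a_1^{(n)}+\cdots+a_n^{(n)}$ and $a_k^{(n)}\in A_k$. Define the padded element
$$z_n:=a_1^{(n)}+\cdots+a_n^{(n)}+c_{n+1}+c_{n+2}+\cdots=b_n+\Big(y-\sum_{k=1}^n c_k\Big),$$
which lies in $\sum_{k=1}^\infty A_k$ because the series $\sum_{k>n}c_k$ converges as the tail of the convergent series $\sum_k c_k$. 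Since $\sum_{k=1}^n c_k\to y$, the parenthesized term tends to $\mathbf{0}$, so $z_n\to x$ and thus $x\in\overline{\sum_{k=1}^\infty A_k}$.

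The only non-routine step is the diagonal construction for closedness, and even there the content amounts to checking, uniformly in $m$, that both summands in the triangle inequality shrink as $n\to\infty$; part (2) then follows cleanly from the vanishing of tails of convergent series, with the hypothesis $\sum_{k=1}^\infty A_k\neq\emptyset$ being exactly what lets us graft such a tail onto each truncated partial sum.
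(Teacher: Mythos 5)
Your proof is correct and follows essentially the same route as the paper's: the trivial inclusion plus a diagonal argument showing that $\lim_{n\to\infty}(A_1+\cdots+A_n)$ is closed for part (1), and grafting the tail of a fixed convergent series $\sum_k c_k$ onto each truncated sum $b_n$ for part (2). The only cosmetic difference is that the paper states the closedness step as a general fact about $\lim_{n\to\infty}B_n$ for arbitrary sets $B_n$, whereas you specialize it to $B_n=A_1+\cdots+A_n$; the argument is identical.
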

\begin{proof} (1) Since $\sum_{k=1}^\infty A_k\subseteq\lim_{n\to\infty}(A_1+\cdots+A_n)$ is obvious, it suffices to prove that $\lim_{n\to\infty}(A_1+\cdots+A_n)$ is closed. In fact, for any $B_1,B_2,\cdots\subseteq\R^d$, we can prove that $\lim_{n\to\infty}B_n$ is a closed set. Let $x_1,x_2,\cdots\in\lim_{n\to\infty}B_n$ and $x\in\R^d$ such that $\lim_{k\to\infty}x_k=x$. It suffices to prove $x\in\lim_{n\to\infty}B_n$.

For each $k\in\N$, by $x_k\in\lim_{n\to\infty}B_n$, there exist $b_{k,1}\in B_1,b_{k,2}\in B_2,\cdots$ such that
$$\lim_{n\to\infty}b_{k,n}=x_k.$$

By $\lim_{n\to\infty}b_{1,n}=x_1$, there exists $N_1\ge1$ such that for all $n\ge N_1$ we have $|b_{1,n}-x_1|<1$.

By $\lim_{n\to\infty}b_{2,n}=x_2$, there exists $N_2>N_1$ such that for all $n\ge N_2$ we have $|b_{2,n}-x_2|<\frac{1}{2}$.

$\cdots$

Repeating this process, we can find $1\le N_1<N_2<N_3<\cdots$ such that
\begin{equation}\label{<1/k}
\text{for all }k\in\N\text{ and }n\ge N_k\text{ we have }|b_{k,n}-x_k|<\frac{1}{k}.
\end{equation}
For each $k\in\N$ and $n\in\{N_k,N_k+1,\cdots,N_{k+1}-1\}$, we define $c_n:=b_{k,n}\in B_n$. It suffices to prove $\lim_{n\to\infty}c_n=x$. Arbitrarily take $\epsilon>0$. By $\lim_{k\to\infty}x_k=x$, there exists $K>\frac{2}{\epsilon}$ such that
\begin{equation}\label{<e/2}
\text{for all }k\ge K\text{ we have }|x_k-x|<\frac{\epsilon}{2}.
\end{equation}
For each $n\ge N_K$, there exists $k\ge K$ such that $N_k\le n<N_{k+1}$ and then
\begin{equation}\label{<E/2}
|c_n-x_k|\overset{\text{by (\ref{<1/k})}}{<}\frac{1}{k}\le\frac{1}{K}<\frac{\epsilon}{2},
\end{equation}
which implies
$$|c_n-x|\le|c_n-x_k|+|x_k-x|<\frac{\epsilon}{2}+\frac{\epsilon}{2}=\epsilon,$$
where the second inequality follows from (\ref{<e/2}) and (\ref{<E/2}). Therefore $\lim_{n\to\infty}c_n=x$.
\vspace{5pt}
\newline(2) Suppose $\sum_{k=1}^\infty A_k\neq\emptyset$. By (1) we only need to prove $\lim_{n\to\infty}(A_1+\cdots+A_n)\subseteq\overline{\sum_{k=1}^\infty A_k}$. Let $x\in\lim_{n\to\infty}(A_1+\cdots+A_n)$. Then there exists $b_n\in A_1+\cdots+A_n$ for each $n\in\N$ such that $x=\lim_{n\to\infty}b_n$. By $\sum_{k=1}^\infty A_k\neq\emptyset$, there exist $a_1\in A_1$, $a_2\in A_2$, $\cdots$ such that $\sum_{k=1}^\infty a_k$ converges, which implies $\sum_{k=n+1}^\infty a_k\to\mathbf{0}$ as $n\to\infty$, where $\mathbf{0}$ denotes the zero vector in $\R^d$. For all $n\in\N$, define
$$x_n:=b_n+\sum_{k=n+1}^\infty a_k\in\sum_{k=1}^\infty A_k.$$
Then $\lim_{n\to\infty}x_n=x$ and we get $x\in\overline{\sum_{k=1}^\infty A_k}$.
\end{proof}

In Proposition \ref{+=} (2), the condition $\sum_{k=1}^\infty A_k\neq\emptyset$ can not be omitted. Otherwise, we can take $d=1$, $A_1=\{-1,0,1\}$ and $A_k=\{-1,1\}$ for all $k\ge2$. Then $\sum_{k=1}^\infty A_k=\emptyset$, and
$$\lim_{n\to\infty}(A_1+\cdots+A_n)=\lim_{n\to\infty}\{-n,\cdots,-1,0,1,\cdots,n\}=\Z\neq\overline{\sum_{k=1}^\infty A_k}.$$

Recall that: for all $A\subseteq\R^d$ and $j\in\{1,\cdots,d\}$, we denote $(A)_j=\big\{a_j\in\R:(a_1,\cdots,a_j,\cdots,a_d)\in A\big\}$; for a non-empty set $A\subseteq\R$, we use $\inf A\in\R\cup\{-\infty\}$ and $\sup A\in\R\cup\{+\infty\}$ respectively to denote the infimum and supremum of $A$, and use $\min A\in\R$ and $\max A\in\R$ respectively to denote the minimal and maximal of $A$ if they exist; for $x_1,x_2,\cdots\in\R\cup\{\pm\infty\}$, we say that $\sum_{k=1}^\infty x_k$ converges if $x_k\in\R$ (not $\pm\infty$) for all $k\in\N$ and the limit $\lim_{n\to\infty}\sum_{k=1}^nx_k$ exists (not $\pm\infty$).

The next proposition follows from some detailed convergence analysis, where the proof of statement (2) relies on a translation technique for sets of infinite sums.

\begin{proposition}\label{bounded-closed}
Let $A_1,A_2,\cdots\subseteq\R^d$ be non-empty sets.
\begin{itemize}
\item[\emph{(1)}] The set $\sum_{k=1}^\infty A_k$ is non-empty and bounded if and only if
\begin{equation}\label{both converge}
\sum_{k=1}^\infty\inf(A_k)_j\text{ and }\sum_{k=1}^\infty\sup(A_k)_j\text{ converge}\quad\text{for all }j\in\{1,\cdots,d\}.
\end{equation}
\item[\emph{(2)}] If $A_1,A_2,\cdots$ are all closed and (\ref{both converge}) holds, then $\sum_{k=1}^\infty A_k$ is non-empty and compact.
\end{itemize}
\end{proposition}
\begin{proof} (1) $\boxed{\Rightarrow}$ Suppose that $\sum_{k=1}^\infty A_k$ is non-empty and bounded. We need to prove (\ref{both converge}).
\newline\textcircled{\footnotesize{$1$}} First we prove that $A_1,A_2,\cdots$ are all bounded.
\newline In fact, by $\sum_{k=1}^\infty A_k\neq\emptyset$, there exist $a_1\in A_1$, $a_2\in A_2$, $\cdots$ such that $\sum_{k=1}^\infty a_k$ converges. For each $n\in\N$, since $\sum_{k=1}^{n-1}a_k+\sum_{k=n+1}^\infty a_k+A_n\subseteq\sum_{k=1}^\infty A_k$ and $\sum_{k=1}^\infty A_k$ is bounded, we know that $A_n$ is bounded.
\newline\textcircled{\footnotesize{$2$}} Now we prove (\ref{both converge}).
\newline Arbitrarily take $j\in\{1,\cdots,d\}$. In the following we only prove that $\sum_{k=1}^\infty\sup(A_k)_j$ converges since the ``inf'' case is similar. By the condition that $\sum_{k=1}^\infty A_k$ is non-empty and bounded, we can take $M_j:=\sup(\sum_{k=1}^\infty A_k)_j\in\R$. It suffices to prove $\lim_{n\to\infty}\sum_{k=1}^n\sup(A_k)_j=M_j$ in the following.
\begin{itemize}
\item[i)] Prove $\varlimsup_{n\to\infty}\sum_{k=1}^n\sup(A_k)_j\le M_j$ by contradiction.
\newline Assume $\varlimsup_{n\to\infty}\sum_{k=1}^n\sup(A_k)_j>M_j$. Then there exists $c_j>0$ and positive integers $n_1<n_2<n_3<\cdots$ such that
$$\sum_{k=1}^{n_p}\sup(A_k)_j>M_j+2c_j$$
for all $p\in\N$. By $\sum_{k=1}^\infty A_k\neq\emptyset$, there exists $a^{(k)}=(a^{(k)}_1,\cdots,a^{(k)}_d)\in A_k$ for each $k\in\N$ such that $\sum_{k=1}^\infty a^{(k)}$ converges, and then $\sum_{k=1}^\infty a^{(k)}_j$ also converges. This implies that there exists $q\in\N$ such that
$$\Big|\sum_{k=n_q+1}^\infty a^{(k)}_j\Big|<c_j.$$
For all $k\in\{1,2,\cdots,n_q\}$, it follows from $A_k\neq\emptyset$ and \textcircled{\footnotesize{$1$}} that we can take $x^{(k)}\in A_k$ such that
$$x^{(k)}_j>\sup(A_k)_j-\frac{c_j}{n_q}.$$
Let
$$s=(s_1,\cdots,s_d):=\sum_{k=1}^{n_q}x^{(k)}+\sum_{k=n_q+1}^\infty a^{(k)}\in\sum_{k=1}^\infty A_k.$$
Then
$$s_j=\sum_{k=1}^{n_q}x^{(k)}_j+\sum_{k=n_q+1}^\infty a^{(k)}_j>\sum_{k=1}^{n_q}\sup(A_k)_j-n_q\cdot\frac{c_j}{n_q}-c_j>M_j,$$
which contradicts $M_j=\sup(\sum_{k=1}^\infty A_k)_j$.
\item[ii)] Prove $\varliminf_{n\to\infty}\sum_{k=1}^n\sup(A_k)_j\ge M_j$.
\newline Arbitrarily take $s=(s_1,\cdots,s_d)\in\sum_{k=1}^\infty A_k$. We only need to prove $s_j\le\varliminf_{n\to\infty}\sum_{k=1}^n\sup(A_k)_j$ in the following. By $s\in\sum_{k=1}^\infty A_k$, there exists $x^{(k)}=(x^{(k)}_1,\cdots,x^{(k)}_d)\in A_k$ for each $k\in\N$ such that $s=\sum_{k=1}^\infty x^{(k)}$ and $s_j=\sum_{k=1}^\infty x^{(k)}_j$ converge. We get $s_j=\varliminf_{n\to\infty}\sum_{k=1}^n x^{(k)}_j\le\varliminf_{n\to\infty}\sum_{k=1}^n\sup(A_k)_j$.
\end{itemize}
$\boxed{\Leftarrow}$ Suppose that (\ref{both converge}) holds. We need to prove that $\sum_{k=1}^\infty A_k$ is non-empty and bounded.
\newline\textcircled{\footnotesize{$1$}} Prove $\sum_{k=1}^\infty A_k\neq\emptyset$.
\newline For each $k\in\N$, let $a^{(k)}=(a^{(k)}_1,\cdots,a^{(k)}_d)\in A_k$. We only need to prove that $\sum_{k=1}^\infty a^{(k)}$ converges. Arbitrarily take $j\in\{1,\cdots,d\}$. It suffices to show that $\sum_{k=1}^\infty a^{(k)}_j$ converges. Let $\epsilon>0$. By (\ref{both converge}) and Cauchy convergence criterion, there exists $N\in\N$ such that for all $m>n>N$ we have $|\sum_{k=n+1}^m\inf(A_k)_j|<\epsilon$, $|\sum_{k=n+1}^m\sup(A_k)_j|<\epsilon$, and then
$$-\epsilon<\sum_{k=n+1}^m\inf(A_k)_j\le\sum_{k=n+1}^ma^{(k)}_j\le\sum_{k=n+1}^m\sup(A_k)_j<\epsilon,$$
which implies $|\sum_{k=n+1}^ma^{(k)}_j|<\epsilon$. It follows again from Cauchy convergence criterion that $\sum_{k=1}^\infty a^{(k)}_j$ converges.
\newline\textcircled{\footnotesize{$2$}} Prove that $\sum_{k=1}^\infty A_k$ is bounded.
\newline Since both $m_j:=\sum_{k=1}^\infty\inf(A_k)_j$ and $M_j:=\sum_{k=1}^\infty\sup(A_k)_j$ converge for all $j\in\{1,\cdots,d\}$, one can easily verify $\sum_{k=1}^\infty A_k\subseteq[m_1,M_1]\times[m_2,M_2]\times\cdots\times[m_d,M_d]$. It follows that $\sum_{k=1}^\infty A_k$ is bounded.
\vspace{5pt}
\newline(2) We use a translation technique after proving a weaker conclusion.
\newline\textcircled{\footnotesize{$1$}} First we prove that: for any non-empty closed sets $B_1,B_2,\cdots\subseteq\R^d$ with $\sum_{k=1}^\infty\sup_{b\in B_k}|b|<\infty$, the set $\sum_{k=1}^\infty B_k$ is non-empty and compact.

In fact, by (1) and
$$\max\Big\{\sum_{k=1}^\infty\big|\inf(B_k)_j\big|,\sum_{k=1}^\infty\big|\sup(B_k)_j\big|\Big\}\le\sum_{k=1}^\infty\sup_{(b_1,\cdots,b_d)\in B_k}|b_j|\le\sum_{k=1}^\infty\sup_{b\in B_k}|b|<\infty$$
for all $j\in\{1,\cdots,d\}$, we know that $\sum_{k=1}^\infty B_k$ is non-empty and bounded. In the following we only need to prove that $\sum_{k=1}^\infty B_k$ is closed.

Let $x^{(1)},x^{(2)},\cdots\in\sum_{k=1}^\infty B_k$ such that $x^{(n)}$ converges to some $x\in\R^d$ as $n\to\infty$. We need to prove $x\in\sum_{k=1}^\infty B_k$. For each $n\in\N$, by $x^{(n)}\in\sum_{k=1}^\infty B_k$, there exist $x^{(n)}_1\in B_1$, $x^{(n)}_2\in B_2$, $\cdots$ such that $x^{(n)}=\sum_{k=1}^\infty x^{(n)}_k$ converges. Since $B_1,B_2,\cdots$ are closed sets and $\sum_{k=1}^\infty\sup_{b\in B_k}|b|<\infty$, we know that $B_1,B_2,\cdots$ are all compact.

By $x^{(1)}_1,x^{(2)}_1,x^{(3)}_1,\cdots\in B_1$, there exists a subsequence $x^{(p_1)}_1,x^{(p_2)}_1,x^{(p_3)}_1,\cdots$ converges to some $b_1\in B_1$.

By $x^{(p_1)}_2,x^{(p_2)}_2,x^{(p_3)}_2,\cdots\in B_2$, there exists a subsequence $x^{(p_{q_1})}_2,x^{(p_{q_2})}_2,x^{(p_{q_3})}_2,\cdots$ converges to some $b_2\in B_2$ with $q_1\ge2$.

By $x^{(p_{q_1})}_3$, $x^{(p_{q_2})}_3$, $x^{(p_{q_3})}_3$, $\cdots$ $\in B_3$, there exists a subsequence $x^{(p_{q_{r_1}})}_3,x^{(p_{q_{r_2}})}_3,x^{(p_{q_{r_3}})}_3,\cdots$ converges to some $b_3\in B_3$ with $r_1\ge2$.

$\cdots$

Repeat this process and take $n_1=p_1$, $n_2=p_{q_1}$, $n_3=p_{q_{r_1}}$, $\cdots$. We get $n_1<n_2<n_3<\cdots$ such that $x^{(n_1)}_1,x^{(n_2)}_1,x^{(n_3)}_1,\cdots\to b_1\in B_1$, $x^{(n_2)}_2,x^{(n_3)}_2,x^{(n_4)}_2,\cdots\to b_2\in B_2$, $x^{(n_3)}_3,x^{(n_4)}_3,x^{(n_5)}_3,\cdots\to b_3\in B_3$, $\cdots$. In the following we only need to prove $x=\sum_{k=1}^\infty b_k$, i.e., $\lim_{j\to\infty}\sum_{k=1}^j b_k=x$. Arbitrarily take $\epsilon>0$. By $\sum_{k=1}^\infty\max_{b\in B_k}|b|<\infty$, there exists $m\in\N$ such that $\sum_{k=m+1}^\infty\max_{b\in B_k}|b|<\epsilon$. Since $x^{(n_j)}\to x$ as $j\to\infty$, there exists an integer $J_0\ge m$ such that for all $j>J_0$ we have $|x^{(n_j)}-x|<\epsilon$. Besides, by $x^{(n_j)}_1\to b_1$, $x^{(n_j)}_2\to b_2$, $\cdots$, $x^{(n_j)}_m\to b_m$ as $j\to\infty$, there exists an integer $J\ge J_0$ such that for all $j>J$ we have
$$|x^{(n_j)}_1-b_1|<\frac{\epsilon}{m},\quad|x^{(n_j)}_2-b_2|<\frac{\epsilon}{m},\quad\cdots,\quad|x^{(n_j)}_m-b_m|<\frac{\epsilon}{m},$$
and then
\begin{align*}
\Big|\sum_{k=1}^j b_k-x\Big|&\le|x-x^{(n_j)}|+\Big|x^{(n_j)}-\sum_{k=1}^m x^{(n_j)}_k\Big|+\Big|\sum_{k=1}^m x^{(n_j)}_k-\sum_{k=1}^m b_k\Big|+\Big|\sum_{k=1}^m b_k-\sum_{k=1}^j b_k\Big|\\
&<\epsilon+\Big|\sum_{k=m+1}^\infty x^{(n_j)}_k\Big|+\Big|\sum_{k=1}^m(x^{(n_j)}_k-b_k)\Big|+\Big|\sum_{k=m+1}^j b_k\Big|\\
&\le\epsilon+\sum_{k=m+1}^\infty|x^{(n_j)}_k|+\sum_{k=1}^m|x^{(n_j)}_k-b_k|+\sum_{k=m+1}^j|b_k|\\
&<\epsilon+\sum_{k=m+1}^\infty\max_{b\in B_k}|b|+m\cdot\frac{\epsilon}{m}+\sum_{k=m+1}^\infty\max_{b\in B_k}|b|<4\epsilon.
\end{align*}
Therefore $\lim_{j\to\infty}\sum_{k=1}^j b_k=x$.
\newline\textcircled{\footnotesize{$2$}} Now we suppose that $A_1,A_2,\cdots\subseteq\R^d$ are all non-empty closed sets and (\ref{both converge}) holds. We need to prove that $\sum_{k=1}^\infty A_k$ is non-empty and compact. For each $k\in\N$, define a translation of $A_k$ by $B_k:=A_k-\big(\inf(A_k)_1,\cdots,\inf(A_k)_d\big)\subseteq\big[0,\sup(A_k)_1-\inf(A_k)_1\big]\times\cdots\times\big[0,\sup(A_k)_d-\inf(A_k)_d\big]$. Then $B_1,B_2,\cdots$ are all non-empty closed sets, and
$$\sum_{k=1}^\infty B_k=\sum_{k=1}^\infty A_k-\Big(\sum_{k=1}^\infty\inf(A_k)_1,\cdots,\sum_{k=1}^\infty\inf(A_k)_d\Big)$$
is a translation of $\sum_{k=1}^\infty A_k$. By \textcircled{\footnotesize{$1$}} and
$$\begin{aligned}
\sum_{k=1}^\infty\sup_{b\in B_k}|b|&\le\sum_{k=1}^\infty\sqrt{\big(\sup(A_k)_1-\inf(A_k)_1\big)^2+\cdots+\big(\sup(A_k)_d-\inf(A_k)_d\big)^2}\\
&\le\sum_{k=1}^\infty\Big(\big(\sup(A_k)_1-\inf(A_k)_1\big)+\cdots+\big(\sup(A_k)_d-\inf(A_k)_d\big)\Big)\overset{\text{by (\ref{both converge})}}{<}\infty,
\end{aligned}$$
we know that $\sum_{k=1}^\infty B_k$ is non-empty and compact, so is $\sum_{k=1}^\infty A_k$.
\end{proof}

Now we use Theorem \ref{old}, probability theory, Propositions \ref{+=} and \ref{bounded-closed} to prove Theorem \ref{spt}.

\begin{proof}[Proof of Theorem \ref{spt}] Let $d\in\N$ and $\mu_1,\mu_2,\cdots\in\cP(\R^d)$.
\vspace{5pt}
\newline(1) Suppose that $\mu_1*\mu_2*\cdots$ exists, and we need to prove $\spt(\mu_1*\mu_2*\cdots)=\overline{\sum_{k=1}^\infty\spt\mu_k}$.
\newline Since Theorem \ref{old} says that $\spt(\mu_1*\mu_2*\cdots)=\lim_{n\to\infty}(\spt\mu_1+\cdots+\spt\mu_n)$, by Proposition \ref{+=} (2), we only need to prove $\sum_{k=1}^\infty\spt\mu_k\neq\emptyset$ in the following. In fact, it follows from the argument above Theorem 3.1 in \cite{LMW22} that there exist a probability space $(\Omega,\cF,\P)$ and a sequence of independent random vectors $\{X_k\}_{k\ge1}$ such that for each $k\ge1$ the distribution of $X_k$ is $\mu_k=\P\circ X_k^{-1}$, and
\begin{equation}\label{a.s.}
\P\Big(\sum_{k=1}^\infty X_k\text{ converges}\Big)=1.
\end{equation}
By $\P(X_k\in\spt\mu_k)=\mu_k(\spt\mu_k)=1$ for every $k\in\N$ and (\ref{a.s.}), there exists $w\in\Omega$ such that $X_k(w)\in\spt\mu_k$ for all $k\in\N$ and $\sum_{k=1}^\infty X_k(w)$ converges. Therefore $\sum_{k=1}^\infty\spt\mu_k\neq\emptyset$.
\vspace{5pt}
\newline(2) Suppose that $\mu_1*\mu_2*\cdots$ exists with compact support. By (1) we get $\spt(\mu_1*\mu_2*\cdots)=\overline{\sum_{k=1}^\infty\spt\mu_k}$, and then $\sum_{k=1}^\infty\spt\mu_k$ is non-empty and bounded. It follows from Proposition \ref{bounded-closed} (1) that (\ref{spt inf-sup con}) holds. By Proposition \ref{bounded-closed} (2) we know that $\sum_{k=1}^\infty\spt\mu_k$ is closed, and then $\spt(\mu_1*\mu_2*\cdots)=\sum_{k=1}^\infty\spt\mu_k$.

To complete the proof, in the following we only need to suppose that (\ref{spt inf-sup con}) holds and prove that $\mu_1*\mu_2*\cdots$ exists with compact support. If we can prove that $\mu_1*\mu_2*\cdots$ exists, then by the above (1) and Proposition \ref{bounded-closed} (1) we know that $\spt(\mu_1*\mu_2*\cdots)$ is compact. Thus we only need to prove that $\mu_1*\mu_2*\cdots$ exists. For all $j\in\{1,\cdots,d\}$, by (\ref{spt inf-sup con}) we get $\lim_{k\to\infty}\inf(\spt\mu_k)_j=0$ and $\lim_{k\to\infty}\sup(\spt\mu_k)_j=0$, which imply that the sequences $\{\inf(\spt\mu_k)_j\}_{k\ge1}$ and $\{\sup(\spt\mu_k)_j\}_{k\ge1}$ are both bounded. Thus we can take
$$m_j:=\inf_{k\ge1}\inf(\spt\mu_k)_j\in\R\quad\text{and}\quad M_j:=\sup_{k\ge1}\sup(\spt\mu_k)_j\in\R$$
for all $j\in\{1,\cdots,d\}$. By
$$\spt\mu_k\subseteq[\inf(\spt\mu_k)_1,\sup(\spt\mu_k)_1]\times\cdots\times[\inf(\spt\mu_k)_d,\sup(\spt\mu_k)_d]\subseteq[m_1,M_1]\times\cdots\times[m_d,M_d]$$ for all $k\in\N$, we know that there exists $r>0$ such that
$$\bigcup_{k=1}^\infty\spt\mu_k\subseteq B(r).$$
To prove that $\mu_1*\mu_2*\cdots$ exists, by Theorem \ref{exist} (1), it suffices to consider the following \textcircled{\footnotesize{$1$}}, \textcircled{\footnotesize{$2$}} and \textcircled{\footnotesize{$3$}}.
\begin{itemize}
\item[\textcircled{\footnotesize{$1$}}] We have $\sum_{k=1}^\infty\mu_k\big(\R^d\setminus B(r)\big)=0$ since $\spt\mu_k\subseteq B(r)$ for all $k\in\N$.
\item[\textcircled{\footnotesize{$2$}}] Prove that $\sum_{k=1}^\infty\int x\text{ }\mathrm{d}\mu_k(x)$ converges.
\newline Arbitrarily take $j\in\{1,\cdots,d\}$. It suffices to show that $\sum_{k=1}^\infty\int x_j\text{ }\mathrm{d}\mu_k(x_1,\cdots,x_d)$ converges. Let $\epsilon>0$. By (\ref{spt inf-sup con}) and Cauchy convergence criterion, there exists $N\in\N$ such that for all $m>n>N$ we have $|\sum_{k=n+1}^m\inf(\spt\mu_k)_j|<\epsilon$ and $|\sum_{k=n+1}^m\sup(\spt\mu_k)_j|<\epsilon$. Note that for all $k\in\N$ and $x=(x_1,\cdots,x_d)\in\spt\mu_k$ we have $\inf(\spt\mu_k)_j\le x_j\le\sup(\spt\mu_k)_j$. Thus for all $m>n>N$ we have
$$-\epsilon<\sum_{k=n+1}^m\inf(\spt\mu_k)_j\le\sum_{k=n+1}^m\int x_j\text{ }\mathrm{d}\mu_k(x_1,\cdots,x_d)\le\sum_{k=n+1}^m\sup(\spt\mu_k)_j<\epsilon.$$
It follows again from Cauchy convergence criterion that $\sum_{k=1}^\infty\int x_j\text{ }\mathrm{d}\mu_k(x_1,\cdots,x_d)$ converges.
\item[\textcircled{\footnotesize{$3$}}] Prove that $\sum_{k=1}^\infty\Big(\int|x|^2\text{ }\mathrm{d}\mu_k(x)-\Big|\int x\text{ }\mathrm{d}\mu_k(x)\Big|^2\Big)$ converges. In fact,
$$\begin{aligned}
\sum_{k=1}^\infty&\Big(\int|x|^2\text{ }\mathrm{d}\mu_k(x)-\Big|\int x\text{ }\mathrm{d}\mu_k(x)\Big|^2\Big)=\sum_{k=1}^\infty\int\Big|x-\int y\text{ }\mathrm{d}\mu_k(y)\Big|^2\mathrm{d}\mu_k(x)\\
&=\sum_{k=1}^\infty\int\sum_{j=1}^d\Big(x_j-\int y_j\text{ }\mathrm{d}\mu_k(y_1,\cdots,y_d)\Big)^2\text{ }\mathrm{d}\mu_k(x_1,\cdots,x_d)\\
&=\sum_{k=1}^\infty\sum_{j=1}^d\int\Big(\int(x_j-y_j)\text{ }\mathrm{d}\mu_k(y_1,\cdots,y_d)\Big)^2\text{ }\mathrm{d}\mu_k(x_1,\cdots,x_d)\\
&\overset{(\star)}{\le}\sum_{k=1}^\infty\sum_{j=1}^d\int\Big(\int\big(\sup(\spt\mu_k)_j-\inf(\spt\mu_k)_j\big)\text{ }\mathrm{d}\mu_k(y)\Big)^2\text{ }\mathrm{d}\mu_k(x)\\
&=\sum_{k=1}^\infty\sum_{j=1}^d\big(\sup(\spt\mu_k)_j-\inf(\spt\mu_k)_j\big)^2\overset{(\star\star)}{<}\infty,
\end{aligned}$$
where ($\star$) follows from $\inf(\spt\mu_k)_j\le x_j,y_j\le\sup(\spt\mu_k)_j$ for all $x=(x_1,\cdots,x_d),y=(y_1,\cdots,y_d)\in\spt\mu_k$, $j\in\{1,\cdots,d\}$ and $k\in\N$, and ($\star\star$) follows from the fact that
\newpage
$$\sum_{k=1}^\infty\big(\sup(\spt\mu_k)_j-\inf(\spt\mu_k)_j\big)=\sum_{k=1}^\infty\sup(\spt\mu_k)_j-\sum_{k=1}^\infty\inf(\spt\mu_k)_j\overset{\text{by (\ref{spt inf-sup con})}}{<}\infty$$
implies
$$\sum_{k=1}^\infty\big(\sup(\spt\mu_k)_j-\inf(\spt\mu_k)_j\big)^2<\infty$$
for every $j\in\{1,\cdots,d\}$.
\end{itemize}
\end{proof}

Corollary \ref{spt-1} follows immediately from Theorem \ref{spt} (1) and the following Proposition \ref{1-closed}, in the proof of which a translation technique similar to the one in the proof of Proposition \ref{bounded-closed} (2) and a filling approximation argument are used.

\begin{proposition}\label{1-closed} Let $A_1,A_2,\cdots\subseteq\R$ be non-empty sets such that both $\min A_k$ and $\max A_k$ exist (not $\pm\infty$) for all $k\in\N$ and $\lim_{k\to\infty}(\max A_k-\min A_k)=0$.
\vspace{5pt}
\newline\emph{(1)} If $\sum_{k=1}^\infty\min A_k$ converges and $\sum_{k=1}^\infty\max A_k$ diverges, then $\sum_{k=1}^\infty A_k=\big[\sum_{k=1}^\infty\min A_k,+\infty\big)$.
\vspace{5pt}
\newline\emph{(2)} If $\sum_{k=1}^\infty\min A_k$ diverges and $\sum_{k=1}^\infty\max A_k$ converges, then $\sum_{k=1}^\infty A_k=\big(-\infty,\sum_{k=1}^\infty\max A_k\big]$.
\vspace{5pt}
\newline\emph{(3)} If both $\sum_{k=1}^\infty\min A_k$ and $\sum_{k=1}^\infty\max A_k$ diverge, then $\sum_{k=1}^\infty A_k=\emptyset$ or $\R$.
\end{proposition}
\begin{proof} (1) Suppose that $\sum_{k=1}^\infty\min A_k$ converges and $\sum_{k=1}^\infty\max A_k$ diverges. We need to prove $\sum_{k=1}^\infty A_k=[\sum_{k=1}^\infty\min A_k,+\infty)$. For each $k\in\N$, let $B_k:=A_k-\min A_k$. Then $\sum_{k=1}^\infty A_k=\sum_{k=1}^\infty B_k+\sum_{k=1}^\infty\min A_k$. It suffices to prove $\sum_{k=1}^\infty B_k=[0,+\infty)$. By $\min B_k=0$ for all $k\in\N$, we get $0\in\sum_{k=1}^\infty B_k\subseteq[0,+\infty)$. In the following we only need to prove $(0,+\infty)\subseteq\sum_{k=1}^\infty B_k$. For each $k\in\N$, let $b_k:=\max B_k=\max A_k-\min A_k\ge0$. Then $\lim_{k\to\infty}b_k=\lim_{k\to\infty}(\max A_k-\min A_k)=0$, but $\sum_{k=1}^\infty b_k=\sum_{k=1}^\infty(\max A_k-\min A_k)$ must diverge since $\sum_{k=1}^\infty\min A_k$ converges and $\sum_{k=1}^\infty\max A_k$ diverges. It follows from $b_k\ge0$ for all $k\in\N$ that $\sum_{k=1}^\infty b_k=+\infty$. Arbitrarily take $x\in(0,+\infty)$. We only need to prove $x\in\sum_{k=1}^\infty B_k$. Since $0,b_k\in B_k$ for all $k\in\N$, it suffices to find finitely or infinitely many positive integers $k_1<k_2<k_3<\cdots$ such that $x=\sum_{n}b_{k_n}$. By $b_1,b_2,\cdots\ge0$, $\lim_{k\to\infty}b_k=0$ and $\sum_{k=1}^\infty b_k=+\infty$, there exist integers $1\le p_1\le q_1$ such that
$$\sum_{k=p_1}^{q_1}b_k\le x<\sum_{k=p_1}^{q_1+1}b_k.$$
If $\sum_{k=p_1}^{q_1}b_k=x$, the proof is complete. Suppose $\sum_{k=p_1}^{q_1}b_k<x$ in the following. By $b_{q_1+2},b_{q_1+3},\cdots\ge0$, $\lim_{k\to\infty}b_k=0$ and $\sum_{k=q_1+2}^\infty b_k=+\infty$, there exist integers $q_1+2\le p_2\le q_2$ such that
$$\sum_{k=p_1}^{q_1}b_k+\sum_{k=p_2}^{q_2}b_k\le x<\sum_{k=p_1}^{q_1}b_k+\sum_{k=p_2}^{q_2+1}b_k.$$
If $\sum_{k=p_1}^{q_1}b_k+\sum_{k=p_2}^{q_2}b_k=x$, the proof is complete. Suppose $\sum_{k=p_1}^{q_1}b_k+\sum_{k=p_2}^{q_2}b_k<x$ in the following. $\cdots$ Repeat this process. If $\sum_{k=p_1}^{q_1}b_k+\sum_{k=p_2}^{q_2}b_k+\cdots+\sum_{k=p_n}^{q_n}b_k=x$ for finitely many positive integers $p_1\le q_1<q_1+1<p_2\le q_2<q_2+1<\cdots<p_n\le q_n$, the proof is complete. Otherwise, there exist infinitely many positive integers $p_1\le q_1<q_1+1<p_2\le q_2<q_2+1<p_3\le q_3<\cdots$ such that for all $n\in\N$ we have
$$\sum_{k=p_1}^{q_1}b_k+\cdots+\sum_{k=p_n}^{q_n}b_k<x<\sum_{k=p_1}^{q_1}b_k+\cdots+\sum_{k=p_n}^{q_n}b_k+b_{q_n+1}.$$
By $\lim_{n\to\infty} b_{q_n+1}=0$, we get $x=\sum_{n=1}^\infty\sum_{k=p_n}^{q_n}b_k$. Thus $x\in\sum_{k=1}^\infty B_k$.
\newpage\noindent(2) follows in a way similar to (1).
\vspace{5pt}
\newline(3) Suppose that both $\sum_{k=1}^\infty\min A_k$ and $\sum_{k=1}^\infty\max A_k$ diverge and $\sum_{k=1}^\infty A_k\neq\emptyset$. We need to prove $\sum_{k=1}^\infty A_k=\R$. By $\sum_{k=1}^\infty A_k\neq\emptyset$ there exist $a_1\in A_1,a_2\in A_2,\cdots$ such that $\sum_{k=1}^\infty a_k$ converges. For each $k\in\N$, let $B_k:=A_k-a_k$. Then $\sum_{k=1}^\infty A_k=\sum_{k=1}^\infty B_k+\sum_{k=1}^\infty a_k$. It suffices to prove $\sum_{k=1}^\infty B_k=\R$. Since the proofs of $(-\infty,0]\subseteq\sum_{k=1}^\infty B_k$ and $[0,+\infty)\subseteq\sum_{k=1}^\infty B_k$ are similar, we only prove $[0,+\infty)\subseteq\sum_{k=1}^\infty B_k$ in the following. In fact, for all $k\in\N$, by $0=a_k-a_k\in A_k-a_k=B_k$ we get $\min B_k\le0\le\max B_k$. It follows from $\lim_{k\to\infty}(\max B_k-\min B_k)=\lim_{k\to\infty}(\max A_k-\min A_k)=0$ that $\lim_{k\to\infty}\max B_k=0$. Since $\sum_{k=1}^\infty\max A_k$ diverges, $\sum_{k=1}^\infty a_k$ converges and $\max B_k=\max A_k-a_k$ for all $k\in\N$, we know that $\sum_{k=1}^\infty\max B_k$ must diverge. By $\max B_k\ge0$ for all $k\in\N$, we get $\sum_{k=1}^\infty\max B_k=+\infty$. For all $k\in\N$, let $b_k:=\max B_k\ge0$. Then $\lim_{k\to\infty}b_k=0$, $\sum_{k=1}^\infty b_k=+\infty$ and $0,b_k\in B_k$ for all $k\in\N$. In the same way as (1), we get $[0,+\infty)\subseteq\sum_{k=1}^\infty B_k$.
\end{proof}

Now we give an example for Remark \ref{spt-re} to end this section. We construct non-empty finite sets $A_1,A_2,\cdots\subseteq[0,1]$ such that $\delta_{A_1}*\delta_{A_2}*\cdots$ exists but $\sum_{k=1}^\infty A_k$ is not closed in the following. Let $\frac{1}{\N}:=\Big\{1,\frac{1}{2},\frac{1}{3},\frac{1}{4},\cdots\Big\}$ and use $\lfloor x\rfloor$ to denote the greatest integer no larger than $x$.

Define $r_1:=\frac{1}{2}$ and take $a_1\in(0,r_1)\setminus\frac{1}{\N}$.

Define $r_2:=\min\Big\{\frac{1}{2^2},\frac{1}{\lfloor\frac{1}{a_1}\rfloor}-a_1\Big\}>0$ and take $a_2\in(0,r_2)\setminus\frac{1}{\N}$.

Define $r_3:=\min\Big\{\frac{1}{2^3},\frac{1}{\lfloor\frac{1}{a_1}\rfloor}-a_1-a_2,\frac{1}{\lfloor\frac{1}{a_2}\rfloor}-a_2\Big\}>0$ and take $a_3\in(0,r_3)\setminus\frac{1}{\N}$.

$\cdots$

Define $r_k:=\min\Big\{\frac{1}{2^k},\frac{1}{\lfloor\frac{1}{a_1}\rfloor}-a_1-a_2-\cdots-a_{k-1},\frac{1}{\lfloor\frac{1}{a_2}\rfloor}-a_2-\cdots-a_{k-1},\cdots,\frac{1}{\lfloor\frac{1}{a_{k-1}}\rfloor}-a_{k-1}\Big\}>0$ and take $a_k\in(0,r_k)\setminus\frac{1}{\N}.$

$\cdots$

For all $k\in\N$, define $t_k:=a_k+a_{k+1}+a_{k+2}+\cdots<\sum_{n=k}^\infty\frac{1}{2^n}=\frac{1}{2^{k-1}}$. First we prove
\begin{equation}\label{ak>}
a_k>\frac{1}{\lfloor\frac{1}{t_k}\rfloor+1}\quad\text{for all }k\in\N.
\end{equation}
Arbitrarily take $k\in\N$. It suffices to show $\lfloor\frac{1}{t_k}\rfloor+1>\frac{1}{a_k}$. Since $\lfloor\frac{1}{a_k}\rfloor+1>\frac{1}{a_k}$, we only need to prove $\lfloor\frac{1}{t_k}\rfloor\ge\lfloor\frac{1}{a_k}\rfloor$. It suffices to show $\frac{1}{t_k}\ge\lfloor\frac{1}{a_k}\rfloor$. For all $n\in\N$, by
$$a_{k+n}<r_{k+n}\le\frac{1}{\lfloor\frac{1}{a_k}\rfloor}-a_k-a_{k+1}-\cdots-a_{k+n-1}$$
we get
$$a_k+a_{k+1}+\cdots+a_{k+n}<\frac{1}{\lfloor\frac{1}{a_k}\rfloor}.$$
As $n\to\infty$, it follows that $t_k\le\frac{1}{\lfloor\frac{1}{a_k}\rfloor}$, i.e., $\frac{1}{t_k}\ge\lfloor\frac{1}{a_k}\rfloor$. Therefore (\ref{ak>}) holds.

For all $k\in\N$, by (\ref{ak>}) we can take different
$$b_{k,1},b_{k,2},\cdots,b_{k,k^2}\in\Big(\frac{1}{\lfloor\frac{1}{t_k}\rfloor+1},a_k\Big)\setminus\frac{1}{\N}$$
and define
$$A_k:=\Big\{0,b_{k,1},b_{k,2},\cdots,b_{k,k^2},a_k,\frac{k}{k+1}\Big\}\subseteq[0,1].$$

The fact that $\delta_{A_1}*\delta_{A_2}*\cdots$ exists follows from Corollary \ref{exist-cor} and
$$\begin{aligned}
\sum_{k=1}^\infty\frac{1}{\#A_k}\sum_{a\in A_k}\frac{a}{1+a}&<\sum_{k=1}^\infty\frac{1}{k^2+3}\sum_{a\in A_k}a<\sum_{k=1}^\infty\frac{1}{k^2+3}\Big((k^2+1)a_k+\frac{k}{k+1}\Big)\\
&<\sum_{k=1}^\infty\Big(\frac{k^2+1}{k^2+3}\cdot\frac{1}{2^k}+\frac{k}{(k^2+3)(k+1)}\Big)<\sum_{k=1}^\infty(\frac{1}{2^k}+\frac{1}{k^2})<\infty.
\end{aligned}$$

In the following we only need to prove that $\sum_{k=1}^\infty A_k$ is not closed. By $\{0,\frac{n}{n+1}\}\subseteq A_n$ for all $n\in\N$, one can easily verify $\frac{n}{n+1}\in\sum_{k=1}^\infty A_k$ for all $n\in\N$. Thus $1=\lim_{n\to\infty}\frac{n}{n+1}\in\overline{\sum_{k=1}^\infty A_k}$. It suffices to show $1\notin\sum_{k=1}^\infty A_k$ in the following. By contradiction, we assume $1\in\sum_{k=1}^\infty A_k$. Then there exist $x_1\in A_1$, $x_2\in A_2$, $\cdots$ such that $\sum_{k=1}^\infty x_k=1$. If $x_k\le a_k$ for all $k\in\N$, then $\sum_{k=1}^\infty x_k\le\sum_{k=1}^\infty a_k<\sum_{k=1}^\infty r_k\le\sum_{k=1}^\infty\frac{1}{2^k}=1$, which contradicts $\sum_{k=1}^\infty x_k=1$. Thus there must exist $n\in\N$ such that $x_n>a_n$. By $x_n\in A_n$ we know $x_n=\frac{n}{n+1}$. Thus
\begin{equation}\label{=1}
x_1+\cdots+x_{n-1}+\frac{n}{n+1}+x_{n+1}+x_{n+2}+\cdots=1.
\end{equation}
If there exists $n'\in\N\setminus\{n\}$ such that $x_{n'}>a_{n'}$, by $x_{n'}\in A_{n'}$ we know $x_{n'}=\frac{n'}{n'+1}$ and then $\sum_{k=1}^\infty x_k\ge\frac{n}{n+1}+\frac{n'}{n'+1}>1$, which contradicts $\sum_{k=1}^\infty x_k=1$. Thus $x_k\le a_k$ for all $k\in\N\setminus\{n\}$. If $x_1=x_2=\cdots=x_{n-1}=0$, then
$$x_1+\cdots+x_{n-1}+\frac{n}{n+1}+x_{n+1}+x_{n+2}+\cdots\le\frac{n}{n+1}+a_{n+1}+a_{n+2}+\cdots$$
$$<\frac{n}{n+1}+\frac{1}{2^{n+1}}+\frac{1}{2^{n+2}}+\cdots=\frac{n}{n+1}+\frac{1}{2^n}\le1,$$
which contradicts (\ref{=1}). Thus there must exist $p\in\{1,2,\cdots,n-1\}$ such that $x_1=\cdots=x_{p-1}=0$ and $x_p>0$. We get
$$t_p=a_p+a_{p+1}+a_{p+2}+\cdots>x_1+\cdots+x_{n-1}+x_{n+1}+x_{n+2}+\cdots\xlongequal[]{\text{by (\ref{=1})}}\frac{1}{n+1},$$
which implies
\begin{equation}\label{n+1>}
n+1>\frac{1}{t_p}\ge\lfloor\frac{1}{t_p}\rfloor.
\end{equation}
Since $x_p>0$ and $x_p\in A_p$ imply $x_p>\frac{1}{\lfloor\frac{1}{t_p}\rfloor+1}$, we get $\lfloor\frac{1}{t_p}\rfloor+1>\frac{1}{x_p}\ge\lfloor\frac{1}{x_p}\rfloor$ and then $\lfloor\frac{1}{t_p}\rfloor\ge\lfloor\frac{1}{x_p}\rfloor$. It follows from (\ref{n+1>}) that $n+1>\lfloor\frac{1}{x_p}\rfloor$, which implies $n+1\ge\lfloor\frac{1}{x_p}\rfloor+1>\frac{1}{x_p}$. Therefore $x_p>\frac{1}{n+1}$ and then $x_1+\cdots+x_{n-1}+\frac{n}{n+1}+x_{n+1}+x_{n+2}+\cdots>\frac{1}{n+1}+\frac{n}{n+1}=1$. This contradicts (\ref{=1}).

\section{Proofs of Theorem \ref{sum union} and Corollary \ref{dim-sum}}
\indent

First we prove the following decomposition of $\sum_{k=1}^\infty(A_k\cup A_k')$.

\begin{proposition}\label{decom}
For each $k\in\N$, let $A_k,A_k'\subseteq\R^d$ where $A_k$ is non-empty and $A_k'$ may be empty. If $\varliminf_{k\to\infty}\inf_{a\in A_k'}|a|>0$, then
$$\sum_{k=1}^\infty\big(A_k\cup A_k'\big)=\bigcup_{p=1}^\infty\Big(\sum_{k=1}^p\big(A_k\cup A_k'\big)+\sum_{k=p+1}^\infty A_k\Big).$$
\end{proposition}
\begin{proof}
Since the inclusion ``$\supseteq$'' is obvious, we only prove ``$\subseteq$'' in the following. Let $x\in\sum_{k=1}^\infty(A_k\cup A_k')$. Then there exist $x^{(1)}\in A_1\cup A_1'$, $x^{(2)}\in A_2\cup A_2'$, $\cdots$ such that $x=\sum_{k=1}^\infty x^{(k)}$ converges, which implies $\lim_{k\to\infty}|x^{(k)}|=0$. Let $c\in(0,\varliminf_{k\to\infty}\inf_{a\in A_k'}|a|)$. Then there exists $p_1\in\N$ such that for all $k>p_1$ we have $\inf_{a\in A_k'}|a|>c$. By $\lim_{k\to\infty}|x^{(k)}|=0$, there exists $p_2\in\N$ such that for all $k>p_2$ we have $|x^{(k)}|<c$. Let $p_0:=\max\{p_1,p_2\}$. Then for all $k\ge p_0+1$ we have $|x^{(k)}|<\inf_{a\in A_k'}|a|$, which implies $x^{(k)}\notin A_k'$. It follows from $x^{(k)}\in A_k\cup A_k'$ that $x^{(k)}\in A_k$ for all $k\ge p_0+1$. Thus
$$x=\sum_{k=1}^{p_0}x^{(k)}+\sum_{k=p_0+1}^\infty x^{(k)}\in\sum_{k=1}^{p_0}\big(A_k\cup A_k'\big)+\sum_{k=p_0+1}^\infty A_k.$$
We get $x\in\bigcup_{p=1}^\infty\big(\sum_{k=1}^p(A_k\cup A_k')+\sum_{k=p+1}^\infty A_k\big)$.
\end{proof}

Except for Proposition \ref{decom}, we need the following.

\begin{proposition}\label{closed+closed}
Let $n\in\N$ and $A_1,A_2,\cdots,A_n\subseteq\R^d$ be closed sets. If for every $j\in\{1,\cdots,d\}$ we have
\begin{equation}\label{min or max n}
\min_{1\le k\le n}\inf(A_k)_j>-\infty\quad\text{or}\quad\max_{1\le k\le n}\sup(A_k)_j<+\infty,
\end{equation}
then $A_1+A_2+\cdots+A_n$ is closed.
\end{proposition}
\begin{proof} (1) First we prove that for any closed sets $X,Y\subseteq\R^d$ with
\begin{equation}\label{min or max 2}
\min\big\{\inf(X)_j,\inf(Y)_j\big\}>-\infty\quad\text{or}\quad\max\big\{\sup(X)_j,\sup(Y)_j\big\}<+\infty
\end{equation}
for every $j\in\{1,\cdots,d\}$, the sum $X+Y$ is closed.

Let $z^{(1)},z^{(2)},\cdots\in X+Y$ such that $z^{(k)}=(z^{(k)}_1,\cdots,z^{(k)}_d)$ converges to some $z=(z_1,\cdots,z_d)\in\R^d$. We need to prove $z\in X+Y$. For each $k\in\N$, by $z^{(k)}\in X+Y$, there exist $x^{(k)}=(x^{(k)}_1,\cdots,x^{(k)}_d)\in X$ and $y^{(k)}=(y^{(k)}_1,\cdots,y^{(k)}_d)\in Y$ such that $z^{(k)}=x^{(k)}+y^{(k)}$. For all $j\in\{1,\cdots,d\}$, we have
$$\lim_{k\to\infty}(x^{(k)}_j+y^{(k)}_j)=\lim_{k\to\infty}z^{(k)}_j=z_j.$$

If $\varliminf_{k\to\infty}x^{(k)}_1=-\infty$, then $\varlimsup_{k\to\infty}y^{(k)}_1=+\infty$, $\inf(X)_1=-\infty$ and $\sup(Y)_1=+\infty$, which contradict (\ref{min or max 2}).

If $\varliminf_{k\to\infty}x^{(k)}_1=+\infty$, then $\varlimsup_{k\to\infty}y^{(k)}_1=-\infty$, $\sup(X)_1=+\infty$ and $\inf(Y)_1=-\infty$, which contradict (\ref{min or max 2}).

Thus we must have $\varliminf_{k\to\infty}x^{(k)}_1\in(-\infty,+\infty)$. There exists a subsequence $\{k_p\}_{p\ge1}$ of $\N$ such that $\lim_{p\to\infty}x^{(k_p)}_1=x_1$ for some $x_1\in\R$.

If $\varliminf_{p\to\infty}x^{(k_p)}_2=-\infty$, then $\varlimsup_{p\to\infty}y^{(k_p)}_2=+\infty$, $\inf(X)_2=-\infty$ and $\sup(Y)_2=+\infty$, which contradict (\ref{min or max 2}).

If $\varliminf_{p\to\infty}x^{(k_p)}_2=+\infty$, then $\varlimsup_{p\to\infty}y^{(k_p)}_2=-\infty$, $\sup(X)_2=+\infty$ and $\inf(Y)_2=-\infty$, which contradict (\ref{min or max 2}).

Thus we must have $\varliminf_{p\to\infty}x^{(k_p)}_2\in(-\infty,+\infty)$. There exists a subsequence $\{k_{p_q}\}_{q\ge1}$ of $\{k_p\}_{p\ge1}$ such that $\lim_{q\to\infty}x^{(k_{p_q})}_2=x_2$ for some $x_2\in\R$, and we also have $\lim_{q\to\infty}x^{(k_{p_q})}_1=x_1$.

$\cdots$ Repeat this process $d$ times. Finally we get a subsequence of positive integers $r_1,r_2,\cdots$ such that $\lim_{n\to\infty}x^{(r_n)}_j=x_j$ for some $x_j\in\R$ for every $j\in\{1,\cdots,d\}$. Let $x:=(x_1,\cdots,x_d)$ and $y:=z-x$. Then $\lim_{n\to\infty}x^{(r_n)}=x$ and $\lim_{n\to\infty}y^{(r_n)}=\lim_{n\to\infty}(z^{(r_n)}-x^{(r_n)})=z-x=y$. Since $X$ and $Y$ are both closed, $\{x^{(r_n)}\}_{n\ge1}\subseteq X$ and $\{y^{(r_n)}\}_{n\ge1}\subseteq Y$, we get $x\in X$ and $y\in Y$, and then $z=x+y\in X+Y$.
\vspace{5pt}
\newline(2) Now we prove this proposition by induction on $n$.
\newline For $n=1$, this proposition obviously holds. Assume that this proposition holds for some $n\in\N$. Let $A_1,A_2,\cdots,A_n,A_{n+1}\subseteq\R^d$ be closed sets such that for every $j\in\{1,\cdots,d\}$ we have
\begin{equation}\label{min or max n+1}
\min_{1\le k\le n+1}\inf(A_k)_j>-\infty\quad\text{or}\quad\max_{1\le k\le n+1}\sup(A_k)_j<+\infty.
\end{equation}
Then (\ref{min or max n}) holds. By the induction hypothesis, we know that $A_1+\cdots+A_n$ is closed. In the following we only need to prove that $A_1+\cdots+A_n+A_{n+1}$ is closed by using (1). For every $j\in\{1,\cdots,d\}$, noting (\ref{min or max n+1}):

if $\min_{1\le k\le n+1}\inf(A_k)_j>-\infty$, then
$$\min\big\{\inf(A_1+\cdots+A_n)_j,\inf(A_{n+1})_j\big\}\ge\min\big\{\inf(A_1)_j+\cdots+\inf(A_n)_j,\inf(A_{n+1})_j\big\}>-\infty;$$

if $\max_{1\le k\le n+1}\sup(A_k)_j<+\infty$, then
$$\max\big\{\sup(A_1+\cdots+A_n)_j,\sup(A_{n+1})_j\big\}\le\max\big\{\sup(A_1)_j+\cdots+\sup(A_n)_j,\sup(A_{n+1})_j\big\}<+\infty.$$
Let $X:=A_1+\cdots+A_n$ and $Y:=A_{n+1}$. By (1) we know that $X+Y=A_1+\cdots+A_n+A_{n+1}$ is closed.
\end{proof}

The condition (\ref{min or max n}) in Proposition \ref{closed+closed} can not be omitted. Otherwise, we can take $d=1$, $n=2$, $A_1=\big\{k+\frac{1}{2^k}:k\in\N\big\}$ and $A_2=\big\{-k-\frac{1}{2^{k+1}}:k\in\N\big\}$. Then $A_1,A_2$ are both closed. But $A_1+A_2$ is not closed, since one can easily verify $0\in\overline{(A_1+A_2)}\setminus(A_1+A_2)$.

Now we use Propositions \ref{decom} and \ref{closed+closed} to prove Theorem \ref{sum union}. Except for a translation technique similar to the one in the proof of Proposition \ref{bounded-closed} (2), the proof of Theorem \ref{sum union} (1) relies on technical estimations on the absolute values of specific sums of the coordinate components of certain summable points.

\begin{proof}[Proof of Theorem \ref{sum union}] For $k\in\N$, let $A_k,A_k'\subseteq\R^d$ where $A_k$ is non-empty and $A_k'$ may be empty.
\vspace{5pt}
\newline(1) Suppose that $A_k\cup A_k'$ is closed for every $k\in\N$, both (\ref{inf-sup con}) and (\ref{inf-sup or}) hold for every $j\in\{1,\cdots,d\}$, and $\lim_{k\to\infty}\inf_{a\in A_k'}|a|=+\infty$. We need to prove that $\sum_{k=1}^\infty(A_k\cup A_k')$ is closed. For each $k\in\N$, define the translations of $A_k$ and $A_k'$ respectively by
$$B_k:=A_k-\big(\inf(A_k)_1,\cdots,\inf(A_k)_d\big)\quad\quad\quad\quad\quad\quad\quad\quad\quad\quad\quad\quad\quad$$
\begin{equation}\label{contained in}
\subseteq\big[0,\sup(A_k)_1-\inf(A_k)_1\big]\times\cdots\times\big[0,\sup(A_k)_d-\inf(A_k)_d\big]
\end{equation}
and
$$B_k':=A_k'-\big(\inf(A_k)_1,\cdots,\inf(A_k)_d\big).\quad\quad\quad\quad\quad\quad\quad\quad\quad\quad\quad\quad\quad$$
Then $B_k\cup B_k'$ is closed for every $k\in\N$, and for every $j\in\{1,\cdots,d\}$
\begin{equation}\label{B inf-sup con}
\text{both }\sum_{k=1}^\infty\inf(B_k)_j=0\text{ and }\sum_{k=1}^\infty\sup(B_k)_j=\sum_{k=1}^\infty\Big(\sup(A_k)_j-\inf(A_k)_j\Big)\text{ converge.}
\end{equation}
Besides, for each $j\in\{1,\cdots,d\}$, since (\ref{inf-sup con}) implies that $\big\{\inf(A_k)_j\big\}_{k\ge1}$ is bounded, by
$$\left\{\begin{aligned}
\inf_{k\in\N}\inf(B_k')_j&=\inf_{k\in\N}\Big(\inf(A_k')_j-\inf(A_k)_j\Big)\ge\inf_{k\in\N}\inf(A_k')_j-\sup_{k\in\N}\inf(A_k)_j\\
\sup_{k\in\N}\sup(B_k')_j&=\sup_{k\in\N}\Big(\sup(A_k')_j-\inf(A_k)_j\Big)\le\sup_{k\in\N}\sup(A_k')_j-\inf_{k\in\N}\inf(A_k)_j
\end{aligned}\right.$$
and (\ref{inf-sup or}), we get
\begin{equation}\label{B inf-sup or}
\inf_{k\in\N}\inf(B_k')_j>-\infty\quad\text{or}\quad\sup_{k\in\N}\sup(B_k')_j<+\infty.
\end{equation}
Moreover, since (\ref{inf-sup con}) implies that $\lim_{k\to\infty}\inf(A_k)_j=0$ for every $j\in\{1,\cdots,d\}$, by
\begin{align*}
\lim_{k\to\infty}\inf_{b\in B_k'}|b|&=\lim_{k\to\infty}\inf_{a\in A_k'}\Big|a-\big(\inf(A_k)_1,\cdots,\inf(A_k)_d\big)\Big|\\
&\ge\lim_{k\to\infty}\Big(\inf_{a\in A_k'}|a|-\Big|\big(\inf(A_k)_1,\cdots,\inf(A_k)_d\big)\Big|\Big)
\end{align*}
and $\lim_{k\to\infty}\inf_{a\in A_k'}|a|=+\infty$, we get
\begin{equation}\label{Bk' infty}
\lim_{k\to\infty}\inf_{b\in B_k'}|b|=+\infty.
\end{equation}
Note that
$$\sum_{k=1}^\infty\big(B_k\cup B_k'\big)=\sum_{k=1}^\infty\big(A_k\cup A_k'\big)-\Big(\sum_{k=1}^\infty\inf(A_k)_1,\cdots,\sum_{k=1}^\infty\inf(A_k)_d\Big).$$
We only need to prove that $\sum_{k=1}^\infty(B_k\cup B_k')$ is closed.
\newline\textcircled{\footnotesize{$1$}} First we prove that there exists $r\in\N$ such that for all $p>r$, the set $\sum_{k=1}^p(B_k\cup B_k')+\sum_{k=p+1}^\infty B_k$ is closed.

For every $j\in\{1,\cdots,d\}$, it follows from (\ref{B inf-sup con}) that $\lim_{k\to\infty}\inf(B_k)_j=\lim_{k\to\infty}\sup(B_k)_j=0$, and then there exists $r_1\in\N$ such that
\begin{equation}\label{Bk subset}
\text{for all }k>r_1\text{ we have }B_k\subseteq[-1,1]^d.
\end{equation}
Besides, by (\ref{Bk' infty}) there exists $r_2\in\N$ such that
\begin{equation}\label{Bk' cap}
\text{for all }k>r_2\text{ we have }B_k'\cap[-1,1]^d=\emptyset.
\end{equation}
Let $r:=\max\{r_1,r_2\}$ and arbitrarily take $p>r$. We only need to prove that $\sum_{k=1}^p(B_k\cup B_k')+\sum_{k=p+1}^\infty B_k$ is closed in the following.
\begin{itemize}
\item[i)] Prove that $\sum_{k=p+1}^\infty B_k$ is closed.
\newline In fact, for every $k\ge p+1$, by (\ref{Bk subset}) and (\ref{Bk' cap}) we get $B_k=(B_k\cup B_k')\cap[-1,1]^d$, where $B_k\cup B_k'$ and $[-1,1]^d$ are both closed. Thus $B_k$ is closed for every $k\ge p+1$. It follows from Proposition \ref{bounded-closed} (2) and (\ref{B inf-sup con}) that $\sum_{k=p+1}^\infty B_k$ is closed.
\item[ii)] Prove that $\sum_{k=1}^p(B_k\cup B_k')+\sum_{k=p+1}^\infty B_k$ is closed.
\newline Since $B_k\cup B_k'$ is closed for every $k\in\{1,\cdots,p\}$ and i) says that $\sum_{k=p+1}^\infty B_k$ is closed, by Proposition \ref{closed+closed}, it suffices to show that for every $j\in\{1,\cdots,d\}$ we have
$$\min_{1\le k\le p}\inf\big(B_k\cup B_k'\big)_j>-\infty\quad\text{and}\quad\inf\Big(\sum_{k=p+1}^\infty B_k\Big)_j>-\infty$$
or
$$\max_{1\le k\le p}\sup\big(B_k\cup B_k'\big)_j<+\infty\quad\text{and}\quad\sup\Big(\sum_{k=p+1}^\infty B_k\Big)_j<+\infty.$$
Arbitrarily take $j\in\{1,\cdots,d\}$. Since (\ref{B inf-sup con}) implies that
$$\text{both }\sum_{k=p+1}^\infty\inf(B_k)_j\text{ and }\sum_{k=p+1}^\infty\sup(B_k)_j\text{ converge,}$$
we get
$$\inf\Big(\sum_{k=p+1}^\infty B_k\Big)_j\ge\sum_{k=p+1}^\infty\inf(B_k)_j>-\infty\quad\text{and}\quad\sup\Big(\sum_{k=p+1}^\infty B_k\Big)_j\le\sum_{k=p+1}^\infty\sup(B_k)_j<+\infty.$$
In the following we only need to verify
$$\min_{1\le k\le p}\inf(B_k\cup B_k')_j>-\infty\quad\text{or}\quad\max_{1\le k\le p}\sup(B_k\cup B_k')_j<+\infty.$$
In fact it follows from (\ref{B inf-sup con}) that for all $1\le k\le p$ we have $\inf(B_k)_j>-\infty$ and $\sup(B_k)_j<+\infty$. By (\ref{B inf-sup or}) we get
$$\min_{1\le k\le p}\inf(B_k\cup B_k')_j=\min_{1\le k\le p}\min\Big\{\inf(B_k)_j,\inf(B_k')_j\Big\}>-\infty$$
or
$$\max_{1\le k\le p}\sup(B_k\cup B_k')_j=\max_{1\le k\le p}\max\Big\{\sup(B_k)_j,\sup(B_k')_j\Big\}<+\infty.$$
\end{itemize}
\textcircled{\footnotesize{$2$}} Now we prove that $\sum_{k=1}^\infty(B_k\cup B_k')$ is closed.
\newline Let $x^{(1)},x^{(2)},\cdots\in\sum_{k=1}^\infty(B_k\cup B_k')$ such that $x^{(n)}$ converges to some $x\in\R^d$. It suffices to show $x\in\sum_{k=1}^\infty(B_k\cup B_k')$. By Proposition \ref{decom} we only need to prove $x\in\sum_{k=1}^p(B_k\cup B_k')+\sum_{k=p+1}^\infty B_k$ for some $p\in\N$. Let $r\in\N$ be defined as in \textcircled{\footnotesize{$1$}}. Since \textcircled{\footnotesize{$1$}} says that $\sum_{k=1}^p(B_k\cup B_k')+\sum_{k=p+1}^\infty B_k$ is closed for all $p>r$, by $\lim_{n\to\infty}x^{(n)}=x$, it suffices to show $\{x^{(n)}\}_{n\ge1}\subseteq\sum_{k=1}^p(B_k\cup B_k')+\sum_{k=p+1}^\infty B_k$ for some $p>r$ in the following.

In fact, since $\{x^{(n)}\}_{n\ge1}$ is bounded, there exists $C>0$ such that
$$|x^{(n)}_1|+\cdots+|x^{(n)}_d|<C\quad\text{for all }n\in\N.$$
Let
$$M:=\sum_{j=1}^d\sum_{k=1}^\infty\big(\sup(A_k)_j-\inf(A_k)_j\big)<\infty.$$
If $B_k'=\emptyset$ for all $k\in\N$, it follows immediately from Proposition \ref{bounded-closed} (2) that $\sum_{k=1}^\infty(B_k\cup B_k')$ is closed, and the proof is complete. In the following we suppose $B_k'\neq\emptyset$ for some $k\in\N$. Then
$$\inf_{k\in\N}\inf(B_k')_j\neq+\infty\quad\text{and}\quad\sup_{k\in\N}\sup(B_k')_j\neq-\infty$$
for every $j\in\{1,\cdots,d\}$, and by (\ref{B inf-sup or}) we can define $m_j\in[0,+\infty)$ by
$$m_j:=\left\{\begin{array}{lll}
\max\Big\{\Big|\inf\limits_{k\in\N}\inf(B_k')_j\Big|,\Big|\sup\limits_{k\in\N}\sup(B_k')_j\Big|\Big\},&\text{if }\inf\limits_{k\in\N}\inf(B_k')_j>-\infty\text{ and }\sup\limits_{k\in\N}\sup(B_k')_j<+\infty;\\
\Big|\inf\limits_{k\in\N}\inf(B_k')_j\Big|,&\text{if }\inf\limits_{k\in\N}\inf(B_k')_j>-\infty\text{ and }\sup\limits_{k\in\N}\sup(B_k')_j=+\infty;\\
\Big|\sup\limits_{k\in\N}\sup(B_k')_j\Big|,&\text{if }\inf\limits_{k\in\N}\inf(B_k')_j=-\infty\text{ and }\sup\limits_{k\in\N}\sup(B_k')_j<+\infty.
\end{array}\right.$$
In the following, for all $a\in\R^d$, we use $a_j$ to denote the $j$-th coordinate of $a$. By
$$\lim_{k\to\infty}\inf_{b\in B_k'}\big(|b_1|+\cdots+|b_d|\big)\ge\lim_{k\to\infty}\inf_{b\in B_k'}|b|\xlongequal[\text{(\ref{Bk' infty})}]{\text{by}}+\infty,$$
there exists $s>r$ such that for all $k>s$ we have
\begin{equation}\label{>}
\inf_{b\in B_k'}\big(|b_1|+\cdots+|b_d|\big)>2\sum_{j=1}^dm_j,
\end{equation}
and there exists $p>s$ such that for all $k>p$ we have
\begin{equation}\label{>++}
\inf_{b\in B_k'}\big(|b_1|+\cdots+|b_d|\big)>C+M+(2+2s)\sum_{j=1}^dm_j.
\end{equation}
Arbitrarily take $n\in\N$. It suffices to prove $x^{(n)}\in\sum_{k=1}^p(B_k\cup B_k')+\sum_{k=p+1}^\infty B_k$. By $x^{(n)}\in\sum_{k=1}^\infty(B_k\cup B_k')$, there exist $x^{(n,1)}\in B_1\cup B_1'$, $x^{(n,2)}\in B_2\cup B_2'$, $\cdots$ such that $x^{(n)}=\sum_{k=1}^\infty x^{(n,k)}$ converges. Arbitrarily take $t\ge p+1$. We only need to prove $x^{(n,t)}\in B_t$. By contradiction, assume $x^{(n,t)}\notin B_t$. Then $x^{(n,t)}\in B_t'$. Define
$$K'_1:=\{k\le s:x^{(n,k)}\in B_k'\},\quad K'_2:=\{k\ge s+1:x^{(n,k)}\in B_k'\},$$
$$K':=K'_1\cup K'_2\quad\text{and}\quad K:=\N\setminus K'.$$
Then $t\in K'_2\subseteq K'$. Since the convergence of $\sum_{k=1}^\infty x^{(n,k)}$ implies $\lim_{k\to\infty}|x^{(n,k)}|=0$, by (\ref{Bk' infty}) we get $\#K'<\infty$. For all $j\in\{1,\cdots,d\}$, we have
$$x^{(n)}_j=\sum_{k\in K}x^{(n,k)}_j+\sum_{k\in K'}x^{(n,k)}_j.$$
i) On the one hand, we have
$$\begin{aligned}
\sum_{j=1}^d\Big|\sum_{k\in K'}x^{(n,k)}_j\Big|&=\sum_{j=1}^d\Big|x^{(n)}_j-\sum_{k\in K}x^{(n,k)}_j\Big|\le\sum_{j=1}^d|x^{(n)}_j|+\sum_{j=1}^d\sum_{k\in K}|x^{(n,k)}_j|\\
&<C+\sum_{j=1}^d\sum_{\substack{k\ge1\\x^{(n,k)}\in B_k}}|x^{(n,k)}_j|\overset{\text{by (\ref{contained in})}}{\le}C+M.
\end{aligned}$$
ii) On the other hand, we can prove
$$\sum_{j=1}^d\Big|\sum_{k\in K'}x^{(n,k)}_j\Big|\ge\sum_{j=1}^d|x^{(n,t)}_j|-(2+2s)\sum_{j=1}^dm_j$$
as follows. Let
$$E:=\Big\{j\in\{1,\cdots,d\}:\inf_{k\in\N}\inf(B_k')_j>-\infty\Big\}\quad\text{and}\quad F:=\big\{1,\cdots,d\big\}\setminus E.$$
Since
\begin{align*}
\sum_{j=1}^d\Big|\sum_{k\in K'}x^{(n,k)}_j\Big|&=\sum_{j\in E}\Big|\sum_{\substack{k\in K'\\x^{(n,k)}_j\ge0}}x^{(n,k)}_j+\sum_{\substack{k\in K'\\x^{(n,k)}_j<0}}x^{(n,k)}_j\Big|+\sum_{j\in F}\Big|\sum_{\substack{k\in K'\\x^{(n,k)}_j<0}}x^{(n,k)}_j+\sum_{\substack{k\in K'\\x^{(n,k)}_j\ge0}}x^{(n,k)}_j\Big|\\
&\ge\sum_{j\in E}\Big(\sum_{\substack{k\in K'\\x^{(n,k)}_j\ge0}}x^{(n,k)}_j-\sum_{\substack{k\in K'\\x^{(n,k)}_j<0}}(-x^{(n,k)}_j)\Big)+\sum_{j\in F}\Big(\sum_{\substack{k\in K'\\x^{(n,k)}_j<0}}(-x^{(n,k)}_j)-\sum_{\substack{k\in K'\\x^{(n,k)}_j\ge0}}x^{(n,k)}_j\Big)\\
&=\sum_{j\in E}\Big(\sum_{\substack{k\in K'\\x^{(n,k)}_j\ge0}}|x^{(n,k)}_j|-\sum_{\substack{k\in K'\\x^{(n,k)}_j<0}}|x^{(n,k)}_j|\Big)+\sum_{j\in F}\Big(\sum_{\substack{k\in K'\\x^{(n,k)}_j<0}}|x^{(n,k)}_j|-\sum_{\substack{k\in K'\\x^{(n,k)}_j\ge0}}|x^{(n,k)}_j|\Big)\\
&=\sum_{j\in E}\Big(\sum_{k\in K'}|x^{(n,k)}_j|-2\sum_{\substack{k\in K'\\x^{(n,k)}_j<0}}|x^{(n,k)}_j|\Big)+\sum_{j\in F}\Big(\sum_{k\in K'}|x^{(n,k)}_j|-2\sum_{\substack{k\in K'\\x^{(n,k)}_j\ge0}}|x^{(n,k)}_j|\Big)\\
&=\sum_{k\in K'}\sum_{j=1}^d|x^{(n,k)}_j|-2\sum_{j\in E}\sum_{\substack{k\in K'\\x^{(n,k)}_j<0}}|x^{(n,k)}_j|-2\sum_{j\in F}\sum_{\substack{k\in K'\\x^{(n,k)}_j\ge0}}|x^{(n,k)}_j|,
\end{align*}
noting $t\in K'_2\subseteq K'$, we only need to prove
$$\sum_{k\in K'\setminus\{t\}}\sum_{j=1}^d|x^{(n,k)}_j|+(2+2s)\sum_{j=1}^dm_j\ge2\sum_{j\in E}\sum_{\substack{k\in K'\\x^{(n,k)}_j<0}}|x^{(n,k)}_j|+2\sum_{j\in F}\sum_{\substack{k\in K'\\x^{(n,k)}_j\ge0}}|x^{(n,k)}_j|.$$
It suffices to combine the following \textcircled{\footnotesize{$a$}}, \textcircled{\footnotesize{$b$}} and \textcircled{\footnotesize{$c$}}.
\begin{itemize}
\item[\textcircled{\footnotesize{$a$}}] We have
$$\sum_{k\in K'\setminus\{t\}}\sum_{j=1}^d|x^{(n,k)}_j|+(2+2s)\sum_{j=1}^dm_j\ge2(\#K')\sum_{j=1}^dm_j,$$
since
\begin{align*}
\sum_{k\in K'\setminus\{t\}}\sum_{j=1}^d|x^{(n,k)}_j|&\ge\sum_{k\in K'_2\setminus\{t\}}\Big(\sum_{j=1}^d|x^{(n,k)}_j|\Big)\overset{\text{by (\ref{>})}}{\ge}\sum_{k\in K'_2\setminus\{t\}}\Big(2\sum_{j=1}^dm_j\Big)=2(\#K'_2-1)\sum_{j=1}^dm_j\\
&=2(\#K'-\#K'_1-1)\sum_{j=1}^dm_j\ge2(\#K')\sum_{j=1}^dm_j-(2+2s)\sum_{j=1}^dm_j.
\end{align*}
\item[\textcircled{\footnotesize{$b$}}] We have
$$\sum_{j\in E}\sum_{\substack{k\in K'\\x^{(n,k)}_j<0}}|x^{(n,k)}_j|\overset{(\star)}{\le}\sum_{j\in E}\sum_{\substack{k\in K'\\x^{(n,k)}_j<0}}m_j\le(\#K')\sum_{j\in E}m_j,$$
where ($\star$) follows from the fact that for all $j\in E$ and $k\in K'$ with $x^{(n,k)}_j<0$, we can prove $|x^{(n,k)}_j|\le m_j$. In fact, by $j\in E$ we get
$$\inf_{l\in\N}\inf(B_l')_j>-\infty.$$
It follows from the definition of $m_j$ that
$$m_j\ge\Big|\inf_{l\in\N}\inf(B_l')_j\Big|.$$
Besides, by $k\in K'$ we get $x^{(n,k)}\in B_k'$ and then
$$-\infty<\inf_{l\in\N}\inf(B_l')_j\le\inf(B_k')_j\le x^{(n,k)}_j.$$
It follows from $x^{(n,k)}_j<0$ that
$$|x^{(n,k)}_j|\le\Big|\inf_{l\in\N}\inf(B_l')_j\Big|\le m_j.$$
\item[\textcircled{\footnotesize{$c$}}] We have
$$\sum_{j\in F}\sum_{\substack{k\in K'\\x^{(n,k)}_j\ge0}}|x^{(n,k)}_j|\overset{(\star\star)}{\le}\sum_{j\in F}\sum_{\substack{k\in K'\\x^{(n,k)}_j\ge0}}m_j\le(\#K')\sum_{j\in F}m_j,$$
where ($\star\star$) follows from a way similar to ($\star$) in the above \textcircled{\footnotesize{$b$}} noting (\ref{B inf-sup or}).
\end{itemize}
Combing i) and ii) we get
$$\sum_{j=1}^d|x^{(n,t)}_j|-(2+2s)\sum_{j=1}^dm_j<C+M,$$
which contradicts $t\ge p+1$, $x^{(n,t)}\in B_t'$ and (\ref{>++}).
\vspace{5pt}
\newline(2) Suppose that $A_k'$ is at most countable for every $k\in\N$ and $\varliminf_{k\to\infty}\inf_{a\in A_k'}|a|>0$. Since the proofs of \textcircled{\footnotesize{$1$}} and \textcircled{\footnotesize{$2$}} in Theorem \ref{sum union} (2) are similar, we only prove \textcircled{\footnotesize{$1$}} in the following. By the definition of Hausdorff dimension, it suffices to show that
$$\cH^s\Big(\sum_{k=1}^\infty\big(A_k\cup A_k'\big)\Big)=0\text{ if and only if }\cH^s\Big(\sum_{k=1}^\infty A_k\Big)=0\quad\text{for all }s\in[0,d].$$
\boxed{\Rightarrow} follows immediately from $\sum_{k=1}^\infty(A_k\cup A_k')\supseteq\sum_{k=1}^\infty A_k$.
\newline\boxed{\Leftarrow} Suppose $\cH^s\big(\sum_{k=1}^\infty A_k\big)=0$ for some $s\in[0,d]$. By Proposition \ref{decom} we only need to prove $\cH^s\big(\sum_{k=1}^p(A_k\cup A_k')+\sum_{k=p+1}^\infty A_k\big)=0$ for all $p\in\N$. Since
$$\sum_{k=1}^p(A_k\cup A_k')+\sum_{k=p+1}^\infty A_k=\bigcup_{\substack{D\cup D'=\{1,\cdots,p\}\\ D\cap D'=\emptyset}}\Big(\sum_{k\in D}A_k+\sum_{k\in D'}A_k'+\sum_{k=p+1}^\infty A_k\Big)$$
is a finite union, it suffices to show
\begin{equation}\label{=0}
\cH^s\Big(\sum_{k\in D}A_k+\sum_{k\in D'}A_k'+\sum_{k=p+1}^\infty A_k\Big)=0
\end{equation}
for all $D$ and $D'$ with $D\cup D'=\{1,\cdots,p\}$ and $D\cap D'=\emptyset$ in the following. Let $a\in\sum_{k\in D'}A_k$. By
$$\sum_{k\in D}A_k+\sum_{k\in D'}A_k'+\sum_{k=p+1}^\infty A_k=\bigcup_{x\in\sum_{k\in D'}A_k'}\Big(x-a+\sum_{k\in D}A_k+a+\sum_{k=p+1}^\infty A_k\Big)\subseteq\bigcup_{x\in\sum_{k\in D'}A_k'}\Big(x-a+\sum_{k=1}^\infty A_k\Big),$$
where $\sum_{k\in D'}A_k'$ is at most countable and $\cH^s\big(\sum_{k=1}^\infty A_k\big)=0$, we get (\ref{=0}).
\end{proof}

Now we prove Corollary \ref{dim-sum} to end this section.

\begin{proof}[Proof of Corollary \ref{dim-sum}] For each $k\in\N$, let $c_k\ge1$ and $C_k\ge c_k+1$ be real numbers, $B_k\subseteq\R^d$, $G_k:=B_k\cap[0,c_k]^d$ with $\emptyset\neq G_k\subseteq\Z^d$ and define
$$D_k:=\big\{w^{(1)}\cdots w^{(k)}:w^{(1)}\in G_1,\cdots,w^{(k)}\in G_k\big\}.$$
Denote the empty word by $\eta$, write $D_0:=\{\eta\}$ and define $D:=\bigcup_{k=0}^\infty D_k$. Let $J_\eta:=J:=[0,1]^d$. For all $k\in\N$ and $w^{(1)}=(w^{(1)}_1,\cdots,w^{(1)}_d)\in G_1$, $\cdots$, $w^{(k)}=(w^{(k)}_1,\cdots,w^{(k)}_d)\in G_k$, define
$$J_{w^{(1)}\cdots w^{(k)}}:=C_1^{-1}w^{(1)}+C_1^{-1}C_2^{-1}w^{(2)}+\cdots+C_1^{-1}\cdots C_k^{-1}w^{(k)}+C_1^{-1}\cdots C_k^{-1}[0,1]^d.$$

Let $\cF:=\{J_w:w\in D\}$,
$$E_k:=\bigcup_{w\in D_k}J_w\quad\text{for all }k\ge0\quad\text{and}\quad E:=\bigcap_{k=0}^\infty E_k.$$

First we prove the following \textbf{Fact 1} and \textbf{Fact 2}.
\vspace{5pt}
\newline\textbf{\underline{Fact 1}.} $\cF$ satisfies the Moran Structure Codition (MSC) defined in \cite[Section 1.2]{HRWW00}.
\begin{itemize}
\item[I.] For any $w\in D$, $J_w$ is obviously geometrically similar to $J$.
\item[II.] For any $k\in\N$ and $w^{(1)}\cdots w^{(k)}\in D_k$, we need to prove $J_{w^{(1)}\cdots w^{(k)}}\subseteq J_{w^{(1)}\cdots w^{(k-1)}}$.
\newline It suffices to show
$$C_1^{-1}w^{(1)}+\cdots+C_1^{-1}\cdots C_{k-1}^{-1}w^{(k-1)}+C_1^{-1}\cdots C_{k-1}^{-1}C_k^{-1}w^{(k)}+C_1^{-1}\cdots C_{k-1}^{-1}C_k^{-1}[0,1]^d$$
$$\subseteq C_1^{-1}w^{(1)}+\cdots+C_1^{-1}\cdots C_{k-1}^{-1}w^{(k-1)}+C_1^{-1}\cdots C_{k-1}^{-1}[0,1]^d,$$
which is equivalent to $C_k^{-1}w^{(k)}+C_k^{-1}[0,1]^d\subseteq[0,1]^d$, and then also equivalent to $w^{(k)}+[0,1]^d\subseteq[0,C_k]^d$. This follows immediately from $w^{(k)}\in G_k\subseteq[0,c_k]^d$ and $c_k+1\le C_k$.
\item[III.] For any $k\ge0$, $w^{(1)}\cdots w^{(k)}\in D_k$ and $u,v\in G_{k+1}$ with $u\neq v$, we need to prove $\text{int}(J_{w^{(1)}\cdots w^{(k)}u})$ $\cap$ $\text{int}(J_{w^{(1)}\cdots w^{(k)}v})=\emptyset$ where $\text{int}(\cdot)$ denotes the interior of a set.
\newline It suffices to show
$$\begin{aligned}
&\big(C_1^{-1}w^{(1)}+\cdots+C_1^{-1}\cdots C_k^{-1}w^{(k)}+C_1^{-1}\cdots C_k^{-1}C_{k+1}^{-1}u+C_1^{-1}\cdots C_k^{-1}C_{k+1}^{-1}(0,1)^d\big)\\
\cap&\big(C_1^{-1}w^{(1)}+\cdots+C_1^{-1}\cdots C_k^{-1}w^{(k)}+C_1^{-1}\cdots C_k^{-1}C_{k+1}^{-1}v+C_1^{-1}\cdots C_k^{-1}C_{k+1}^{-1}(0,1)^d\big)=\emptyset.
\end{aligned}$$
We only need to prove $\big(u+(0,1)^d\big)\cap\big(v+(0,1)^d\big)=\emptyset$. This follows immediately from $u,v\in G_{k+1}\subseteq\Z^d$ and $u\neq v$.
\end{itemize}
\textbf{\underline{Fact 2}.} $\sum_{k=1}^\infty C_1^{-1}\cdots C_k^{-1}G_k=E$.
\newline\boxed{\subset} Let $x\in\sum_{k=1}^\infty C_1^{-1}\cdots C_k^{-1}G_k$. Then there exist $x^{(1)}\in G_1$, $x^{(2)}\in G_2$, $\cdots$ such that $x=\sum_{k=1}^\infty C_1^{-1}\cdots C_k^{-1}x^{(k)}$ converges in $\R^d$. We need to prove $x\in E$. Arbitrarily take an integer $k\ge0$. It suffices to show $x\in\bigcup_{w^{(1)}\cdots w^{(k)}\in D_k}J_{w^{(1)}\cdots w^{(k)}}$. We only need to prove $x\in J_{x^{(1)}\cdots x^{(k)}}$, i.e.,
$$\sum_{n=1}^\infty C_1^{-1}\cdots C_n^{-1}x^{(n)}\in C_1^{-1}x^{(1)}+\cdots+C_1^{-1}\cdots C_k^{-1}x^{(k)}+C_1^{-1}\cdots C_k^{-1}[0,1]^d,$$
which is equivalent to
$$\sum_{n=1}^\infty C_{k+1}^{-1}\cdots C_{k+n}^{-1}x^{(k+n)}\in[0,1]^d,\quad\text{i.e.,}\quad\sum_{n=1}^\infty\frac{(x^{(k+n)}_1,\cdots,x^{(k+n)}_d)}{C_{k+1}\cdots C_{k+n}}\in[0,1]^d.$$
This follows from the fact that for all $j\in\{1,\cdots,d\}$ we have
$$\begin{aligned}
0\le&\sum_{n=1}^\infty\frac{x^{(k+n)}_j}{C_{k+1}\cdots C_{k+n}}\le\sum_{n=1}^\infty\frac{c_{k+n}}{(c_{k+1}+1)\cdots(c_{k+n}+1)}\\
=&\frac{c_{k+1}}{c_{k+1}+1}+\frac{c_{k+2}}{(c_{k+1}+1)(c_{k+2}+1)}+\frac{c_{k+3}}{(c_{k+1}+1)(c_{k+2}+1)(c_{k+3}+1)}+\cdots\\
=&\Big(1-\frac{1}{c_{k+1}+1}\Big)+\Big(\frac{1}{c_{k+1}+1}-\frac{1}{(c_{k+1}+1)(c_{k+2}+1)}\Big)\\
&+\Big(\frac{1}{(c_{k+1}+1)(c_{k+2}+1)}-\frac{1}{(c_{k+1}+1)(c_{k+2}+1)(c_{k+3}+1)}\Big)+\cdots=1.
\end{aligned}$$
\boxed{\supset} Let $x\in E=\bigcap_{k=0}^\infty E_k$. We need to prove $x\in\sum_{k=1}^\infty C_1^{-1}\cdots C_k^{-1}G_k$. By
$$x\in E_1=\bigcup_{w^{(1)}\in G_1}J_{w^{(1)}},$$
there exists $x^{(1)}\in G_1$ such that $x\in J_{x^{(1)}}$. By
$$x\in E_2=\bigcup_{w^{(1)}\in G_1,w^{(2)}\in G_2}J_{w^{(1)}w^{(2)}},$$
there exist $x^{(1)'}\in G_1$ and $x^{(2)}\in G_2$ such that $x\in J_{x^{(1)'}x^{(2)}}$. It follows from $x\in J_{x^{(1)}}\cap J_{x^{(1)'}x^{(2)}}\neq\emptyset$ and the MSC of $\cF$ in \textbf{Fact 1} that $x^{(1)}=x^{(1)'}$. Thus $x\in J_{x^{(1)}x^{(2)}}$.

$\cdots$

Repeating this process we know that there exist $x^{(1)}=(x^{(1)}_1,\cdots,x^{(1)}_d)$ $\in$ $G_1$, $x^{(2)}=(x^{(2)}_1,\cdots,x^{(2)}_d)$ $\in$ $G_2$, $\cdots$ such that $x\in\bigcap_{k=1}^\infty J_{x^{(1)}\cdots x^{(k)}}$. In order to prove $x\in\sum_{k=1}^\infty C_1^{-1}\cdots C_k^{-1}G_k$, we only need to show the following I and II.
\begin{itemize}
\item[I.] Prove that $\sum_{k=1}^\infty C_1^{-1}\cdots C_k^{-1}x^{(k)}$ converges in $\R^d$.
\newline In fact, this follows immediately from
$$0\le\sum_{k=1}^\infty\frac{x^{(k)}_j}{C_1\cdots C_k}\le\sum_{k=1}^\infty\frac{c_k}{(c_1+1)\cdots(c_k+1)}=1\quad\text{for all }j\in\{1,\cdots,d\}.$$
\item[II.] Prove $x=\sum_{k=1}^\infty C_1^{-1}\cdots C_k^{-1}x^{(k)}$.
\newline Since $\lim_{k\to\infty}|J_{x^{(1)}\cdots x^{(k)}}|=0$ and $J_{x^{(1)}}\supseteq J_{x^{(1)}x^{(2)}}\supseteq J_{x^{(1)}x^{(2)}x^{(3)}}\supseteq\cdots$ are all closed sets, we get $\#(\bigcap_{k=1}^\infty J_{x^{(1)}\cdots x^{(k)}})=1$. In order to prove $x=\sum_{k=1}^\infty C_1^{-1}\cdots C_k^{-1}x^{(k)}$, by $x\in\bigcap_{k=1}^\infty J_{x^{(1)}\cdots x^{(k)}}$, it suffices to show $\sum_{k=1}^\infty C_1^{-1}\cdots C_k^{-1}x^{(k)}\in\bigcap_{k=1}^\infty J_{x^{(1)}\cdots x^{(k)}}$. In fact this follows in the same way as in the proof of the above ``\boxed{\subset}''.
\end{itemize}

Now we deduce statements (1) and (2) in Corollary \ref{dim-sum} from Theorem \ref{sum union}. For all $k\in\N$, let $A_k:=C_1^{-1}\cdots C_k^{-1}G_k\neq\emptyset$ and $A_k':=C_1^{-1}\cdots C_k^{-1}(B_k\setminus[0,c_k]^d)$ (may be $\emptyset$).
\vspace{5pt}
\newline(1) Suppose that $B_1,B_2,\cdots$ are all closed, (\ref{cor inf-sup or}) holds for every $j\in\{1,\cdots,d\}$, and
$$\lim_{k\to\infty}\frac{\inf\{|x|:x\in B_k\setminus[0,c_k]^d\}}{C_1\cdots C_k}=+\infty,\quad\text{i.e.,}\quad\lim_{k\to\infty}\inf_{a\in A_k'}|a|=+\infty.$$
We need to prove that $\sum_{k=1}^\infty(A_k\cup A_k')$ is closed. For every $j\in\{1,\cdots,d\}$, since (\ref{cor inf-sup or}) implies (\ref{inf-sup or}), by Theorem \ref{sum union} (1), it suffices to verify (\ref{inf-sup con}). In fact this follows immediately from
$$\max\Big\{\sum_{k=1}^\infty\big|\min(A_k)_j\big|,\sum_{k=1}^\infty\big|\max(A_k)_j\big|\Big\}\le\sum_{k=1}^\infty\frac{c_k}{C_1\cdots C_k}\le\sum_{k=1}^\infty\frac{c_k}{(c_1+1)\cdots(c_k+1)}=1.$$
(2) Suppose that $B_k$ is at most countable for every $k\in\N$ and
$$\varliminf_{k\to\infty}\frac{\inf\{|x|:x\in B_k\setminus[0,c_k]^d\}}{C_1\cdots C_k}>0,\quad\text{i.e.,}\quad\varliminf_{k\to\infty}\inf_{a\in A_k'}|a|>0.$$
\begin{itemize}
\item[\textcircled{\footnotesize{$1$}}] Suppose $\prod_{k=1}^\infty\frac{\#G_k}{C_k^d}=0$ and we need to prove $\cL^d(\sum_{k=1}^\infty(A_k\cup A_k'))=0$.
\newline Since $\cL^d$ and $\cH^d$ are equivalent, by Theorem \ref{sum union} (2) \textcircled{\footnotesize{$1$}}, it suffices to show $\cL^d(\sum_{k=1}^\infty A_k)$ $=0$. Recalling \textbf{Fact 2}, we only need to prove $\cL^d(E)=0$. In fact, this follows immediately from
$$\cL^d(E)\le\cL^d(E_k)\le\sum_{w\in D_k}\cL^d(J_w)=\sum_{w\in D_k}\cL^d(C_1^{-1}\cdots C_k^{-1}[0,1]^d)=\frac{\#G_1\cdots\#G_k}{C_1^d\cdots C_k^d}\to0$$
as $k\to\infty$ using $\prod_{k=1}^\infty\frac{\#G_k}{C_k^d}=0$.
\item[\textcircled{\footnotesize{$2$}}] Suppose $\lim_{k\to\infty}\frac{\log C_k}{\log C_1\cdots C_k}=0$ and we need to prove the Hausdorff and packing dimension formulae for $\sum_{k=1}^\infty C_1^{-1}\cdots C_k^{-1}B_k$, which is equal to $\sum_{k=1}^\infty(A_k\cup A_k')$. Since the proofs of the two formulae are similar, in the following we only prove the Hausdorff one, i.e.,
    $$\dim_H\sum_{k=1}^\infty(A_k\cup A_k')=\varliminf_{k\to\infty}\frac{\log\#G_1\cdots\#G_k}{\log C_1\cdots C_k}.$$
    By Theorem \ref{sum union} (2) \textcircled{\footnotesize{$1$}} and \textbf{Fact 2}, it suffices to show
    $$\dim_HE=\varliminf_{k\to\infty}\frac{\log\#G_1\cdots\#G_k}{\log C_1\cdots C_k}$$
    using \cite[Theorem 1.3]{HRWW00}. Since
$$0=\lim_{k\to\infty}\frac{\log C_k}{\log C_1\cdots C_k}\le\lim_{k\to\infty}\frac{\log C_k}{\log C_1\cdots C_k-\log\sqrt{d}}\le\lim_{k\to\infty}\frac{\log C_k}{\frac{1}{2}\log C_1\cdots C_k}=0,$$
we get
$$\lim_{k\to\infty}\frac{\log\frac{1}{C_k}}{\log\max_{w\in D_k}|J_w|}=\lim_{k\to\infty}\frac{-\log C_k}{\log|C_1^{-1}\cdots C_k^{-1}[0,1]^d|}=\lim_{k\to\infty}\frac{\log C_k}{\log C_1\cdots C_k-\log\sqrt{d}}=0.$$
It follows from \textbf{Fact 1} and \cite[Theorem 1.3]{HRWW00} that
$$\dim_HE=\varliminf_{k\to\infty}\frac{\log\#G_1\cdots\#G_k}{-\log\frac{1}{C_1}\cdots\frac{1}{C_k}}=\varliminf_{k\to\infty}\frac{\log\#G_1\cdots\#G_k}{\log C_1\cdots C_k}.$$
\end{itemize}
\end{proof}

\section{Proofs of Corollaries \ref{inter-com} and \ref{inter-non}}
\indent

Using Corollary \ref{dim-spt} and Theorem \ref{SC}, we can deduce Corollaries \ref{inter-com} and \ref{inter-non} by constructing sequences $\{m_k\}_{k\ge1}$ and $\{N_k\}_{k\ge1}$ similar to the $\{b_k\}_{k\ge1}$ and $\{N_k\}_{k\ge1}$ given in the proof of \cite[Theorem 1.7]{LMW22}. For self-contained and for the convenience of the readers, we still give the detailed proofs as follows.

\begin{proof}[Proof of Corollary \ref{inter-com}] Let $d\in\N$ and arbitrarily take $\alpha,\beta\in[0,d]$ with $\alpha\le\beta$. Let $m_1=2$ and $m_k=k^2$ for all $k\ge2$. Define a family of functions $g_\gamma:\N\to\N$ for $\gamma\in[0,1]$ by
$$g_\gamma(n):=\left\{\begin{array}{ll}
n^{1+\lfloor\log n\rfloor} & \text{if }\gamma=0,\\
\lfloor n^{\frac{1}{\gamma}-1}\rfloor n & \text{if }0<\gamma<1,\\
2n & \text{if }\gamma=1,
\end{array}\right.$$
where $\lfloor x\rfloor$ denotes the integer part of $x$. Then
\begin{equation}\label{lim to gamma-compact}
\lim_{n\to\infty}\frac{\log n}{\log g_\gamma(n)}=\gamma\quad\text{for all }\gamma\in[0,1].
\end{equation}
Choose a strictly increasing sequence of integers $\{l_j\}_{j=1}^\infty$ such that $l_1=0$ and
\begin{equation}\label{l lim-compact}
\lim_{j\to\infty}\frac{l_j\log l_j}{l_{j+1}-l_j}=0.
\end{equation}
For any $k\in\N$, let
$$N_k:=\left\{\begin{array}{ll}
g_\frac{\alpha}{d}(m_k) & \text{if } l_j<k\le l_{j+1}\text{ for some odd }j\in\N,\\
g_\frac{\beta}{d}(m_k) & \text{if } l_j<k\le l_{j+1}\text{ for some even }j\in\N,
\end{array}\right.$$
and let $B_k:=\{0,1,\cdots,m_k-1\}^d$. Then $N_k\ge m_k\ge2$ are integers with $m_k\mid N_k$ for all $k\in\N$, the condition (\ref{cor lim infty}) holds where $\min\emptyset$ is regarded as $+\infty$, $\prod_{k=1}^\infty\frac{\#B_k}{N_k^d}=\prod_{k=1}^\infty\frac{m_k^d}{N_k^d}=0$ by $N_k\ge2m_k$ for all $k$ large enough, and $\{B_k\}_{k\ge1}$ is a sequence of nearly $d$-th power lattices with respect to $\{m_k\}_{k\ge1}$ and the sequence of $d\times d$ diagonal matrices $\{\text{diag}(N_k,\cdots,N_k)\}_{k\ge1}$. In order to use Corollary \ref{dim-spt}, it suffices to prove $\lim_{k\to\infty}\frac{\log N_k}{\log N_1\cdots N_k}=0$. In fact, by $2m_k\le N_k\le m_k^{1+\log m_k}$ for all $k\in\N$ large enough, we have
$$\begin{aligned}
\lim_{k\to\infty}\frac{\log N_k}{\log N_1N_2\cdots N_k}&\le\lim_{k\to\infty}\frac{\log m_k^{1+\log m_k}}{\log 2m_12m_2\cdots2m_k}=\lim_{k\to\infty}\frac{(1+\log m_k)\log m_k}{k\log2+\log m_1m_2\cdots m_k}\\
&=\lim_{k\to\infty}\frac{(1+\log k^2)\log k^2}{k\log2+\log2\cdot2^2\cdot3^2\cdots k^2}\le\lim_{k\to\infty}\frac{4(\log k)^2+2\log k}{k\log2}\\
&=\frac{4}{\log2}\lim_{k\to\infty}\frac{(\log k)^2}{k}+\frac{2}{\log2}\lim_{k\to\infty}\frac{\log k}{k}=0.
\end{aligned}$$
Therefore, by applying Corollary \ref{dim-spt}, we know that the infinite convolution
$$\mu=\delta_{N_1^{-1}B_1}*\delta_{N_1^{-1}N_2^{-1}B_2}*\delta_{N_1^{-1}N_2^{-1}N_3^{-1}B_3}*\cdots$$
exists, is a singular spectral measure with a spectrum in $\Z^d$, $\spt\mu=\sum_{k=1}^\infty N_1^{-1}\cdots N_k^{-1}B_k$,
$$\dim_H\spt\mu=\varliminf_{k\to\infty}\frac{d\log m_1\cdots m_k}{\log N_1\cdots N_k}\quad\text{and}\quad\dim_P\spt\mu=\varlimsup_{k\to\infty}\frac{d\log m_1\cdots m_k}{\log N_1\cdots N_k}.$$
To complete the proof, it suffices to show the following (1), (2) and (3).
\begin{itemize}
\item[(1)] Prove that $\spt\mu$ is compact.
\newline In fact this follows immediately from Corollary \ref{spt-cor} (2) and for all $j\in\{1,\cdots,d\}$,
$$\sum_{k=1}^\infty\big|\min(N_1^{-1}\cdots N_k^{-1}B_k)_j\big|=0<\infty$$
and
$$\sum_{k=1}^\infty\big|\max(N_1^{-1}\cdots N_k^{-1}B_k)_j\big|=\sum_{k=1}^\infty\frac{m_k-1}{N_1\cdots N_k}\le\sum_{k=1}^\infty\frac{1}{N_1\cdots N_{k-1}}\cdot\frac{m_k}{N_k}\le\sum_{k=1}^\infty\frac{1}{2^{k-1}}=2<\infty.$$
\item[(2)] Prove $\varliminf_{k\to\infty}\frac{\log m_1m_2\cdots m_k}{\log N_1N_2\cdots N_k}=\frac{\alpha}{d}$.

On the one hand, we have
$$\varliminf_{k\to\infty}\frac{\log m_1m_2\cdots m_k}{\log N_1N_2\cdots N_k}\overset{(\star)}{\ge}\varliminf_{k\to\infty}\frac{\log m_k}{\log N_k}\overset{(\star\star)}{\ge}\varliminf_{k\to\infty}\frac{\log m_k}{\log g_\frac{\alpha}{d}(m_k)}\xlongequal[\text{(\ref{lim to gamma-compact})}]{\text{by}}\frac{\alpha}{d},$$
where ($\star$) follows from Theorem \ref{SC} and ($\star\star$) follows from $g_\frac{\beta}{d}(m_k)\le g_\frac{\alpha}{d}(m_k)$ for all $k$ large enough with $0\le\frac{\alpha}{d}\le\frac{\beta}{d}\le1$.

On the other hand, we have
\begin{align*}
\varliminf_{k\to\infty}&\frac{\log m_1m_2\cdots m_k}{\log N_1N_2\cdots N_k}\le\varliminf_{j\to\infty}\frac{\log m_1m_2\cdots m_{l_{2j}}}{\log N_1N_2\cdots N_{l_{2j}}}\\
&\le\varliminf_{j\to\infty}\frac{\log m_1m_2\cdots m_{l_{2j-1}}+\log m_{l_{2j-1}+1}m_{l_{2j-1}+2}\cdots m_{l_{2j}}}{\log N_{l_{2j-1}+1}N_{l_{2j-1}+2}\cdots N_{l_{2j}}}\\
&\le\varliminf_{j\to\infty}\Big(\frac{l_{2j-1}\log m_{l_{2j-1}}}{(l_{2j}-l_{2j-1})\log g_\frac{\alpha}{d}(m_{l_{2j-1}+1})}+\frac{\log m_{l_{2j-1}+1}m_{l_{2j-1}+2}\cdots m_{l_{2j}}}{\log N_{l_{2j-1}+1}N_{l_{2j-1}+2}\cdots N_{l_{2j}}}\Big)\\
&\le\varliminf_{j\to\infty}\Big(\frac{l_{2j-1}}{l_{2j}-l_{2j-1}}\cdot\frac{\log m_{l_{2j-1}+1}}{\log g_\frac{\alpha}{d}(m_{l_{2j-1}+1})}+\frac{\log m_{l_{2j-1}+1}+\log m_{l_{2j-1}+2}+\cdots+\log m_{l_{2j}}}{\log N_{l_{2j-1}+1}+\log N_{l_{2j-1}+2}+\cdots+\log N_{l_{2j}}}\Big)\\
&\xlongequal[\text{and }(\ref{lim to gamma-compact})]{\text{by }(\ref{l lim-compact})}\varliminf_{j\to\infty}\frac{\log m_{l_{2j-1}+1}+\log m_{l_{2j-1}+2}+\cdots+\log m_{l_{2j}}}{\log g_\frac{\alpha}{d}(m_{l_{2j-1}+1})+\log g_\frac{\alpha}{d}(m_{l_{2j-1}+2})+\cdots+\log g_\frac{\alpha}{d}(m_{l_{2j}})}=\frac{\alpha}{d},
\end{align*}
where the last equality can be proved as follows. Let $r:=\frac{\alpha}{d}\in[0,1]$, and for all $n\in\N$ let
$$a_n=\log m_1+\log m_2+\cdots+\log m_{l_{2n}},\quad c_n=\log m_1+\log m_2+\cdots+\log m_{l_{2n-1}},$$
$$b_n=\log g_r(m_1)+\log g_r(m_2)+\cdots+\log g_r(m_{l_{2n}}), d_n=\log g_r(m_1)+\log g_r(m_2)+\cdots+\log g_r(m_{l_{2n-1}}).$$
It suffices to prove $\lim_{n\to\infty}\frac{a_n-c_n}{b_n-d_n}=r$. Since Theorem \ref{SC} and (\ref{lim to gamma-compact}) imply $\lim_{n\to\infty}\frac{a_n}{b_n}=\lim_{n\to\infty}\frac{c_n}{d_n}=r$, by Proposition \ref{diff-quot} we only need to verify $\varliminf_{n\to\infty}\frac{b_n}{d_n}>1$. In fact this follows immediately from
$$\begin{aligned}
\varliminf_{n\to\infty}\frac{b_n}{d_n}-1&=\varliminf_{n\to\infty}\frac{\log g_r(m_{l_{2n-1}+1})+\log g_r(m_{l_{2n-1}+2})+\cdots+\log g_r(m_{l_{2n}})}{\log g_r(m_1)+\log g_r(m_2)+\cdots+\log g_r(m_{l_{2n-1}})}\\
&\ge\varliminf_{n\to\infty}\frac{(l_{2n}-l_{2n-1})\cdot\log g_r(m_{l_{2n-1}})}{l_{2n-1}\cdot\log g_r(m_{l_{2n-1}})}\xlongequal[\text{(\ref{l lim-compact})}]{\text{by}}\infty.
\end{aligned}$$
\item[(3)] Prove $\varlimsup_{k\to\infty}\frac{\log m_1m_2\cdots m_k}{\log N_1N_2\cdots N_k}=\frac{\beta}{d}$.

On the one hand, we have
$$\varlimsup_{k\to\infty}\frac{\log m_1m_2\cdots m_k}{\log N_1N_2\cdots N_k}\overset{(\star)}{\le}\varlimsup_{k\to\infty}\frac{\log m_k}{\log N_k}\overset{(\star\star)}{\le}\varlimsup_{k\to\infty}\frac{\log m_k}{\log g_\frac{\beta}{d}(m_k)}\xlongequal[\text{(\ref{lim to gamma-compact})}]{\text{by}}\frac{\beta}{d},$$
where ($\star$) follows from Theorem \ref{SC} and ($\star\star$) follows from $g_\frac{\alpha}{d}(m_k)\ge g_\frac{\beta}{d}(m_k)$ for all $k$ large enough with $0\le\frac{\alpha}{d}\le\frac{\beta}{d}\le1$.

On the other hand, we have
$$\begin{aligned}
\varlimsup_{k\to\infty}\frac{\log m_1m_2\cdots m_k}{\log N_1N_2\cdots N_k}&\ge\varlimsup_{j\to\infty}\frac{\log m_1m_2\cdots m_{l_{2j+1}}}{\log N_1N_2\cdots N_{l_{2j+1}}}\\
&\ge\varlimsup_{j\to\infty}\frac{\log m_{l_{2j}+1}m_{l_{2j}+2}\cdots m_{l_{2j+1}}}{\log N_1N_2\cdots N_{l_{2j}}+\log N_{l_{2j}+1}N_{l_{2j}+2}\cdots N_{l_{2j+1}}}\\
&\ge\varlimsup_{j\to\infty}\frac{\log m_{l_{2j}+1}m_{l_{2j}+2}\cdots m_{l_{2j+1}}}{l_{2j}\log g_\frac{\alpha}{d}(m_{l_{2j}})+\log g_\frac{\beta}{d}(m_{l_{2j}+1})g_\frac{\beta}{d}(m_{l_{2j}+2})\cdots g_\frac{\beta}{d}(m_{l_{2j+1}})}\\
&\overset{(\star)}{=}\varlimsup_{j\to\infty}\frac{\log m_{l_{2j}+1}m_{l_{2j}+2}\cdots m_{l_{2j+1}}}{\log g_\frac{\beta}{d}(m_{l_{2j}+1})g_\frac{\beta}{d}(m_{l_{2j}+2})\cdots g_\frac{\beta}{d}(m_{l_{2j+1}})}=\frac{\beta}{d},
\end{aligned}$$
where the last equality can be proved in the same way as the end of the above (2), and ($\star$) follows from
\newpage
\begin{align*}
\lim_{j\to\infty}\frac{l_{2j}\log g_\frac{\alpha}{d}(m_{l_{2j}})}{\log m_{l_{2j}+1}m_{l_{2j}+2}\cdots m_{l_{2j+1}}}&\le\lim_{j\to\infty}\frac{l_{2j}(1+\log m_{l_{2j}})\log m_{l_{2j}}}{(l_{2j+1}-l_{2j})\log m_{l_{2j}+1}}\\
&\le\lim_{j\to\infty}\frac{l_{2j}(1+\log l_{2j}^2)}{l_{2j+1}-l_{2j}}\xlongequal[(\ref{l lim-compact})]{\text{by}}0,
\end{align*}
where the first inequality follows from $g_\frac{\alpha}{d}(n)\le n^{1+\log n}$ for all $n\in\N$ large enough.
\end{itemize}
\end{proof}

\begin{proof}[Proof of Corollary \ref{inter-non}] Let $d\in\N$ and arbitrarily take $\alpha,\beta\in[0,d]$ with $\alpha\le\beta$. Let $\{m_k\}_{k\ge1}$ and $\{N_k\}_{k\ge1}$ be defined as in the proof of Corollary \ref{inter-com}. For all $k\in\N$, let
$$B_k:=\{0,1,\cdots,m_k-2,N_1\cdots N_k\cdot k+m_k-1\}^d$$
and
$$G_k:=B_k\cap\{0,1,\cdots,m_k-1\}^d=\{0,1,\cdots,m_k-2\}^d.$$
In a way similar to the proof of Corollary \ref{inter-com}, by applying Corollary \ref{dim-spt}, we know that the infinite convolution
$$\mu=\delta_{N_1^{-1}B_1}*\delta_{N_1^{-1}N_2^{-1}B_2}*\delta_{N_1^{-1}N_2^{-1}N_3^{-1}B_3}*\cdots$$
exists, is a singular spectral measure with a spectrum in $\Z^d$, $\spt\mu=\sum_{k=1}^\infty N_1^{-1}\cdots N_k^{-1}B_k$,
$$\dim_H\spt\mu=\varliminf_{k\to\infty}\frac{d\log(m_1-1)\cdots(m_k-1)}{\log N_1\cdots N_k}\text{ and }\dim_P\spt\mu=\varlimsup_{k\to\infty}\frac{d\log(m_1-1)\cdots(m_k-1)}{\log N_1\cdots N_k}.$$
To complete the proof, we only need to show the following (1) and (2).
\begin{itemize}
\item[(1)] Prove that $\spt\mu$ is not compact.
\newline In fact this follows immediately from Corollary \ref{spt-cor} (2) and
$$\sum_{k=1}^\infty\max(N_1^{-1}\cdots N_k^{-1}B_k)_1=\sum_{k=1}^\infty\frac{N_1\cdots N_k\cdot k+m_k-1}{N_1\cdots N_k}\ge\sum_{k=1}^\infty k=\infty.$$
\item[(2)] Prove $\varliminf_{k\to\infty}\frac{\log(m_1-1)\cdots(m_k-1)}{\log N_1\cdots N_k}=\frac{\alpha}{d}$ and $\varlimsup_{k\to\infty}\frac{\log(m_1-1)\cdots(m_k-1)}{\log N_1\cdots N_k}=\frac{\beta}{d}$.
\newline By (2) and (3) in the proof of Corollary \ref{inter-com}, we only need to show $\lim_{k\to\infty}\frac{\log(m_1-1)\cdots(m_k-1)}{\log m_1\cdots m_k}$ $=$ $1$. In fact, it follows from
$$\lim_{k\to\infty}\frac{\log(m_k-1)}{\log m_k}=\lim_{k\to\infty}\frac{\log(k^2-1)}{\log k^2}=1$$
that
$$\begin{aligned}
\lim_{k\to\infty}\frac{\log(m_1-1)\cdots(m_k-1)}{\log m_1\cdots m_k}&=\lim_{k\to\infty}\frac{\log(m_1-1)+\cdots+\log(m_k-1)}{\log m_1+\cdots+\log m_k}\\
&\xlongequal[\text{Theorem \ref{SC}}]{\text{by}}\lim_{k\to\infty}\frac{\log(m_k-1)}{\log m_k}=1.
\end{aligned}$$
\end{itemize}
\end{proof}

\begin{ack}
This work was supported by ``National Natural Science Foundation of China'' (NSFC 12201652) and (NSFC 12271534).
\end{ack}

$ $
\newline\noindent Address: School of Mathematics and Statistics,
\newline$\text{ }\quad\text{ }\quad\text{ }\quad\text{ }$ Guangdong University of Technology,
\newline$\text{ }\quad\text{ }\quad\text{ }\quad\text{ }$ Guangzhou, 510520, P.R. China
\newline$ $
\newline Emails: yaoqiang.li@gdut.edu.cn
\newline$\text{ }\quad\text{ }\quad\text{ }\text{ }\text{ }$ scutyaoqiangli@qq.com

\end{document}